\newtheorem{thm}{Theorem}[section] 
\newtheorem{lemma}[thm]{Lemma}
\newtheorem{proposition}[thm]{Proposition}
\newtheorem{corollary}[thm]{Corollary}
\theoremstyle{definition}
\newtheorem{remark}[thm]{Remark}
  \newtheorem{definition-remark}[thm]{Definition-Remark}
\def\geq{\geqslant}
\def\leq{\leqslant}
\def\im{\operatorname{im}}
\def\SL{\operatorname{SL}}
\def\rank{\operatorname{rank}}
\def\c1{\operatorname{c_1}}
\def\c2{\operatorname{c_2}}
\def\Sym{\operatorname{Sym}}
\def\Sec{\operatorname{Sec}}
\def\SL{\operatorname{SL}}
\def\Tan{\operatorname{Tan}}
\def\CC{{\mathbb C}}
\def\PP{{\mathbb P}}
\def\O{{\mathcal O}}
\def\Ii{{\mathcal I}}
\def\H{{\mathcal H}}
\def\C{{\mathcal C}}
\def\c{\mathfrak{c}}
\def\cong{\simeq}
\def\+{\oplus}               
\def\*{\otimes}                  
\def\Hess{\operatorname{Hess}}
\def\hess{\operatorname{hess}}
\def\det{\operatorname{det}}
\def\Sing{\operatorname{Sing}}
\def\GN{\operatorname{GN}}
\def\Sym{\operatorname{Sym}}
\begin{document}

\title{The Hessian map}

\author[C.~Ciliberto]{Ciro Ciliberto}
\address{Ciro Ciliberto, Dipartimento di Matematica, Universit\`a di Roma Tor Vergata, Via della Ricerca Scientifica, 00173 Roma, Italy}
\email{cilibert@mat.uniroma2.it}

\author[G.~Ottaviani]{Giorgio Ottaviani}
\address{Giorgio Ottaviani, Dipartimento di Matematica e Informatica ``Ulisse Dini'', Universit\`a di Firenze,  Viale Morgagni, 67/A, 50134 Firenze, Italy}
\email{giorgio.ottaviani@unifi.it}



 \begin{abstract}  In this paper we study the Hessian map $h_{d,r}$ which associates to any hypersurface of degree $d$ in $\PP^r$ its Hessian hypersurface. We study general properties of this map and we prove that: $h_{d,1}$ is birational onto its image if $d\geq 5$; we study in detail the maps $h_{3,1}$, $h_{4,1}$ and $h_{3,2}$; we study the restriction of the Hessian map to the locus of hypersurfaces of degree $d$ with Waring rank $r+2$ in $\PP^r$, proving  
 that this restriction is injective as soon as $r\geq 2$ and $d\geq 3$, which implies that $h_{3,3}$ is birational onto its image; we prove that the differential of the Hessian map is of maximal rank on the generic hypersurfaces of degree $d$ with Waring rank $r+2$ in $\PP^r$, as soon as $r\geq 2$ and $d\geq 3$.\end{abstract}

\maketitle

\tableofcontents

\vspace{-1cm}

\section{Introduction}\label{sec:hessian}
Let $\Sigma(d,r)$ be the projective space of dimension $N(d,r):={{d+r}\choose d}-1$, which parameterizes the hypersurfaces of degree $d$ in $\PP^r$. If $\PP^r=\PP(V^\vee)$, with $V$ a $\mathbb C$ vector space of dimension $r+1$, then $\Sigma(d,r)=\PP({\rm Sym}^d(V)))$.

 If $d\geq 3$, consider the  rational map
\[
h_{d,r}: \Sigma(d,r)\dasharrow \Sigma ((r+1)(d-2),r),
\]
called the \emph{Hessian map}, which maps a hypersurface $F$ to its \emph{Hessian hypersurface} $\Hess(F)$. 

If one introduces in $\PP^r$ a system of homogeneous coordinates $[x_0,\ldots, x_r]$, and if  $F$  in { these} coordinates is defined by an equation $f=0$, where $f$ is a homogeneous polynomial of degree $d$ in $x_0,\ldots, x_r$, then $\Hess(F)$ is defined by the equation
\[
\det \Big ( \frac {\partial^2 f}{\partial x_i\partial x_j} \Big)_{0\leq i\leq j\leq r}=0.
\] 
The polynomial on the left side is called the \emph{Hessian polynomial} of $f$, and denoted by $\hess(f)$. We will often denote the derivatives with respect to the variables $x_0,\ldots, x_r$ with a { subscript}, e.g., 
\[
\hess(f)=(f_{ij})_{0\leq i\leq j\leq r}.
\]  

From the equation of the hessian polynomial, it follows that $h_{d,r}$ is defined by a linear system $\H_{d,r}$ 
of hypersurfaces of degree $r+1$ in  $\Sigma(d,r)$. 

The indeterminacy points of $h_{d,r}$, which are the base points for $\H_{d,r}$, are the hypersurfaces $F$ with equation $f=0$ such that $\hess(f)\equiv 0$. 
These are called \emph{hypersurfaces with vanishing Hessian}. The indeterminacy locus of $h_{d,r}$ has a natural scheme structure, it is denoted by ${\rm GN}_{d,r}$ and it is called the \emph{$(d,r)$--Gordan-Noether locus}. 

Among the hypersurfaces with vanishing Hessian there are the \emph{cones}, which fill up an irreducible closed subset $\C_{d,r}$ of $\Sigma(d,r)$ called the \emph{cone locus}. A hypersurface $F$ of degree $d$ with equation $f=0$ is a cone if and only if the derivatives  $f_i$ are linearly dependent for $i=0,\ldots,r$, i.e., if and only if the \emph{polar map} $f^*: V^\vee\to \Sym^{d-1}(V)$ determined by $f$ has rank smaller than $r+1$. This condition determines a scheme structure on $\C_{d,r}$ called the \emph{cone scheme structure} on $\C_{d,r}$. Another scheme structure on $\C_{d,r}$ is induced by it being contained in ${\rm GN}_{d,r}$. 

The characterization of the hypersurfaces with vanishing Hessian which are not cones is in general a highly non--trivial problem. We recall that:

\begin{thm}[Hesse's Theorem]\label{thm:hesse} If $r\leq 3$, then a hypersurface has vanishing hessian if and only if it is a cone.
\end{thm}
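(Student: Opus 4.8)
The implication ``cone $\Rightarrow$ vanishing Hessian'' is trivial: if $F=V(f)$ is a cone, a suitable linear change of coordinates makes $f$ independent of $x_r$, so the last row and column of $(f_{ij})$ vanish and $\hess(f)\equiv 0$. The content is the converse, which I would prove following the Gordan--Noether circle of ideas. First I would reformulate the statement: differentiating Euler's identity $\sum_i x_i f_i=d\,f$ gives $(f_{ij})\,x=(d-1)\,\nabla f$, so the Hessian matrix is the Jacobian of the polar map $\nabla f=(f_0,\dots,f_r)$, and $F$ is a cone precisely when $f_0,\dots,f_r$ are \emph{linearly} dependent. Since over $\CC$ a polynomial map has everywhere degenerate Jacobian if and only if its components are algebraically dependent, $\hess(f)\equiv 0$ is equivalent to algebraic dependence of $f_0,\dots,f_r$, and the theorem becomes: for $r\le 3$, algebraically dependent partials are linearly dependent. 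Introduce the polar map $P\colon\PP^r\dashrightarrow\PP^r$, its image $Z=\overline{P(\PP^r)}$ and $k=\dim Z$; as $\hess(f)\equiv 0$ makes the Jacobian of $\nabla f$ everywhere singular, $\dim\overline{\nabla f(\AA^{r+1})}\le r$, that is, $k\le r-1$.

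The heart of the matter, and the step I expect to be the main obstacle, is the Gordan--Noether structure theorem: the closure of the general fibre of $\nabla f$ is an \emph{affine-linear} subspace $\Lambda\subset\AA^{r+1}$ of dimension $r-k$, not passing through the origin, on which $\nabla f$ is a constant nonzero vector. Its tangent space at a general point $x$ is $\ker(f_{ij})(x)$, of the expected dimension $r-k$; the real work (the classical Gordan--Noether argument) is to show that a tangent vector $v\in\ker(f_{ij})(x)$ remains in the kernel along the whole line $x+tv$, which forces the fibre to be linear and $\nabla f$ to be constant on it. Passing to $\PP^r$ and using the homogeneity of $\nabla f$, the general fibre of $P$ is then a linear subspace $L\subset\PP^r$ of dimension $r-k$; distinct fibres of $P$ are disjoint and they cover $\PP^r$.

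It then remains to run the dimension count, which is exactly what confines us to $r\le 3$. Since $k\le r-1\le 2$, only $k\in\{0,1,2\}$ can occur. If $k=0$ then $Z$ is a point, so $f_i=w_i\,g$ for a fixed vector $w$ and a single form $g$, and $F$ is a cone. If $k=1$ the fibres $L$ are hyperplanes, but two distinct hyperplanes of $\PP^r$ meet when $r\ge 2$ (and $k=1$ forces $r\ge 2$), contradicting disjointness; so this case does not occur. If $k=2$ then $r=3$ and one obtains a $2$-dimensional family of pairwise skew lines covering $\PP^3$: here I would use that $\nabla f$ is constant on the affine line $\Lambda_z=\{a_z+t\,b_z\}$, whence the coefficient of $t^{d-1}$ in $\nabla f(a_z+t\,b_z)$, namely $\nabla f(b_z)$, vanishes, so $[b_z]\in\Sing(V(f))$. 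As a point of $\PP^3$ lies on at most one line of the family, $z\mapsto[b_z]$ is injective, so its image is a surface contained in $\Sing(V(f))$ --- impossible when $V(f)$ is reduced, since then $\dim\Sing(V(f))\le 1$. Hence $F$ is a cone. The case in which $f$ has a repeated factor, so that $\Sing(V(f))$ can itself be a surface, would have to be treated separately, for instance by reducing to the radical of $f$ or by a direct computation with $\hess(g^m)$; this, together with the structure theorem above, is where the genuine difficulty sits.
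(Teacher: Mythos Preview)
The paper does not prove this statement: Hesse's theorem is quoted as a classical fact and the reader is sent to the literature, so there is no in-paper proof to compare against. Your outline follows the Gordan--Noether strategy and you rightly flag the linearity of the fibres of $\nabla f$ as the main substantive step you are not supplying.

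There is, however, a further gap in your case analysis that you do not flag. You rely on the projective fibres $L_z$ of $P$ being pairwise disjoint in $\PP^r$, but fibres of a rational map are disjoint only on its domain of definition; their closures may meet inside the indeterminacy locus, which here is $\{\nabla f=0\}=\Sing V(f)$. In the $k=2$, $r=3$ step this bites directly: the point $[b_z]$ you produce satisfies $\nabla f(b_z)=0$, so it lies precisely in that locus, and nothing prevents all the lines $L_z$ from passing through a single such point --- this is exactly what happens when $f$ \emph{is} a cone with vertex $[b]$. Hence the injectivity of $z\mapsto[b_z]$ is unproved and the promised surface in $\Sing V(f)$ need not appear. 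The same issue undercuts your $k=1$ step: two hyperplane fibres do meet, but only in the indeterminacy locus, which is not by itself a contradiction. The classical proofs avoid this by introducing the auxiliary Gordan--Noether map $h=(h_0:\cdots:h_r)$ with $h_i=\partial_i\Phi(f_0,\ldots,f_r)$ for a minimal relation $\Phi$, establishing the key identities for $h$ (it takes values in $\ker(f_{ij})$ and is constant along its own direction), and running the case analysis on the image of $h$ rather than on the fibres of $P$; the references the paper cites carry this out.
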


This is no longer true for $r\geq 4$. There is a long history concerning hypersurfaces with vanishing Hessian, too long to be recalled here, for some information see \cite {CilRusSim}. We will only recall that, due to results of Gordan--Noether \cite {GoNo}, Franchetta \cite {fra} and others (see  \cite {deRu}, \cite {GarRe}, \cite {Lo}), there is a full classification of hypersurfaces in $\PP^4$, not cones, with vanishing Hessian.  For instance, in degree $3$ these are the hypersurfaces that are in the  ${\rm PGL}(5, \mathbb C)$--orbit of the \emph{Perazzo cubic threefold} with equation $x_0x_3^2+x_1x_3x_4+x_2x_4^2=0$.

There are various question concerning the Hessian map which are worth to be considered. Here we list some of them:\\
\begin{inparaenum}
\item [(i)] determine the scheme structure of the Gordan--Noether locus. This is probably too ambitious, but, as particular cases, determine this scheme structure for the {cone locus} and for the Gordan--Noether locus in $\PP^4$, at least for hypersurfaces of low degree;\\
\item [(ii)] study the image and the fibres of the Hessian map. In particular, when $d<(r+1)(d-2)$, i.e., when $d>2+\frac 2r$, is the hessian map generically injective? In other words, is the general hypersurface of degree $d$ uniquely determined by its Hessian  when $d>2+\frac 2r$?\\ 
\item [(iii)] study the (closure of the) image of the Hessian map $h_{d,r}$, which may be called the \emph{Hessian variety} of type $(d,r)$, and denoted by $H_{d,r}$. 
\end{inparaenum}

The present paper is devoted to give some partial answers to some of these questions. Specifically, in \S  \ref {sec:d,1} we present some general considerations and prove several results about $h_{d,1}$, in particular we prove in Theorem \ref {thm:birat} that $h_{d,1}$ is birational onto its image $H_{d,1}$ for $d\geq 5$. This is the best possible result in this direction, because $h_{d,1}$ is not birational onto its image if $d\leq 4$. In \S\S \ref {sec:1,3}, \ref {sec:1,4} and \ref  {sec:2,3} we study in detail the maps $h_{3,1}$, $h_{4,1}$ and $h_{3,2}$. The results here are often classical and some of them well known in the current literature, however we put them in our general perspective. In \S   \ref {sec:r+2} we study the restriction of the Hessian map to the locus of hypersurfaces of degree $d$ with Waring rank $r+2$ in $\PP^r$. In Theorem \ref {prop:r+2} we prove that this restriction is injective as soon as $r\geq 2$ and $d\geq 3$. As a consequence of this and of the famous Sylvester Pentahedral Theorem, we prove in Theorem \ref {thm:sylv} that the map $h_{3,3}$ is birational onto its image. In \S \ref {sec:finite} we prove that the differential of the Hessian map is of maximal rank on the generic hypersurfaces of degree $d$ with Waring rank $r+2$ in $\PP^r$, as soon as $r\geq 2$ and $d\geq 3$ (see Theorem \ref {thm:der_hessian}). As a consequence, we have that $h_{d,r}$ is generically finite onto its image as soon as $r\geq 2$ and $d\geq 3$ (see Corollary \ref {cor:fin}).

We conjecture that  $h_{d,r}$ should be birational onto its image as soon as $r\geq 2$ and $d\geq 3$, {except for $h_{3,2}$}, but so far we have not been able to prove it. 

{In this paper we work over an algebraically closed field of characteristic zero. 
\medskip

\noindent {\bf Acknowledgements:} Both authors are members of GNSAGA of INdAM. The first author acknowledges the MIUR Excellence Department Project awarded to the Department of Mathematics, University of Rome Tor Vergata, CUP E83C18000100006. 
The second author acknowledges the H2020-MSCA-ITN-2018 project POEMA.}

\section{Some general remarks on $h_{d,1}$}\label {sec:d,1}

We start by focusing on 
$$h_{d,1}:  \Sigma(d,1)\cong \PP^d{ \dasharrow} \Sigma(2d-4,1)\cong \PP^{2d-4}.$$
Note that $\Sigma(d,1)$ parameterizes all effective divisors of degree $d$ on $\PP^1$.  The indeterminacy locus of $h_{d,1}$ is the cone locus $\C_{d,1}$, which coincides with the rational normal curve $\Gamma:=\Gamma_d$ of degree $d$ parameterizing all divisors of type $dx$ in $\Sigma(d,1)$, with $x\in \PP^1$. 

\begin{remark}\label{rem:con} The cone scheme structure on the rational normal curve $\Gamma$ is the reduced scheme structure. In fact, let 
\begin{equation}\label{eq:pol}
f(x_0,x_1)={ \sum_{i=0}^da_i{d\choose i}x_0^{d-i}x_1^i}
\end{equation}
be a generic homogeneous polynomial of degree $d$. It defines a cone if and only if the  derivatives
\[
{ \begin{split}
& f_0=d\sum_{i=0}^da_i{{d-1}\choose i}x_0^{d-1-i}x_1^i\\
&f_1=d\sum_{i=0}^da_{i+1}{{d-1}\choose i}x_0^{d-1-i}x_1^i
\end{split} }
\]
are linearly dependent. Hence the cone scheme structure is defined by the equations
\[
{ \rank \left(\begin{matrix} 
a_0&a_1& \cdots &a_{d-1} \\
a_1&a_2&\cdots &a_d \\
\end{matrix}\right)<2, }
\]
which define the reduced rational normal curve with affine parametric equations
\[
{ a_i=  t^i}, \quad \text {for}\quad i={ 0},\ldots, d, \quad \text {and}\quad t\in \mathbb C.
\]
\end{remark}\medskip

\begin {proposition}\label{prop:GN} The scheme structure determined by $\GN_{d,1}$ on $\Gamma$ is the reduced scheme structure, i.e., the quadrics in the linear system $\H_{d,1}$ defining $h_{d,1}$ cut out  $\Gamma$ schematically. 
\end{proposition}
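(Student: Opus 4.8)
We want to show that the linear system $\H_{d,1}$ of quadrics defining $h_{d,1}$ cuts out the rational normal curve $\Gamma\subset\Sigma(d,1)\cong\PP^d$ schematically, not just set-theoretically. By Remark~\ref{rem:con} the cone scheme structure on $\Gamma$ is already reduced, and $\GN_{d,1}\supseteq\C_{d,1}=\Gamma$ set-theoretically by Hesse's Theorem (case $r=1$). So the content is: the ideal generated by the quadratic entries of $\hess(f)$ equals the full homogeneous ideal $I_\Gamma$ in degree $2$; equivalently, the base scheme of $\H_{d,1}$ is exactly $\Gamma$ with reduced structure. Since $I_\Gamma$ is generated by quadrics (the $2\times 2$ minors of the catalecticant $\left(\begin{smallmatrix}a_0&a_1&\cdots&a_{d-1}\\ a_1&a_2&\cdots&a_d\end{smallmatrix}\right)$), it suffices to prove the two quadratic systems coincide, i.e.\ that $(\H_{d,1})$ and $(I_\Gamma)_2$ span the same subspace of $\Sym^2(\Sym^d(V)^\vee)$.

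First I would write out $\hess(f)$ explicitly for $f$ as in \eqref{eq:pol}. With $f=\sum a_i\binom{d}{i}x_0^{d-i}x_1^i$ one has $f_{00}, f_{01}=f_{10}, f_{11}$ each a binary form of degree $d-2$ whose coefficients are, up to binomial factors, the $a_i$'s shifted; concretely $f_{00}$ involves $a_0,\dots,a_{d-2}$, $f_{01}$ involves $a_1,\dots,a_{d-1}$, and $f_{11}$ involves $a_2,\dots,a_d$. Then $\hess(f)=f_{00}f_{11}-f_{01}^2$ is a binary form of degree $2d-4$ in $(x_0,x_1)$ whose $2d-3$ coefficients are quadratic forms $Q_0,\dots,Q_{2d-4}$ in the $a_i$'s; these $Q_k$ are precisely a basis of the linear span of $\H_{d,1}$. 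The key computation is to recognize each $Q_k$ as a linear combination of the $2\times 2$ minors $m_{ij}:=a_ia_{j+1}-a_{i+1}a_j$ of the catalecticant — this is a Cauchy–Binet / convolution identity: the coefficient of $x_0^{2d-4-k}x_1^k$ in $f_{00}f_{11}-f_{01}^2$ is $\sum_{p+q=k}(c_p c'_q - c''_p c''_q)$ where the $c$'s are the (scaled) coefficients of $f_{00},f_{11},f_{01}$, and the cross terms collapse into minors of consecutive columns. This shows $(\H_{d,1})\subseteq (I_\Gamma)_2$.

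For the reverse inclusion — which is the crux — I would argue by a dimension count together with a nondegeneracy statement. The space $(I_\Gamma)_2$ has dimension $\binom{d+1}{2}-(2d+1)=\binom{d-1}{2}$ (the vanishing ideal of a rational normal curve of degree $d$ in $\PP^d$ has exactly this many independent quadrics, all minors of the catalecticant). On the other hand $\H_{d,1}$ has projective dimension $2d-4$, i.e.\ spans a space of dimension $2d-3$ — but wait, that is smaller than $\binom{d-1}{2}$ for $d\geq 5$, so $(\H_{d,1})$ cannot literally equal $(I_\Gamma)_2$ as vector spaces; rather the correct statement is that the \emph{ideal} generated by the $Q_k$ in all degrees contains $I_\Gamma$, and one must check the saturation. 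So instead I would show directly that the scheme $Z:=V(Q_0,\dots,Q_{2d-4})$ is reduced and equals $\Gamma$: it contains $\Gamma$ by the previous paragraph, and conversely a tangent-space computation at a general point (say the point $a_i=t^i$) shows $\dim T_p Z = 1 = \dim\Gamma$, so $Z$ is smooth of dimension $1$ along $\Gamma$, hence reduced there; combined with set-theoretic equality $Z_{\mathrm{red}}=\Gamma$ (Hesse) this forces $Z=\Gamma$ as schemes away from finitely many points, and then a check that no embedded or extra components occur — e.g.\ via the explicit Jacobian of the $Q_k$ having rank $d-1$ everywhere on $\Gamma$ — completes the argument.

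\textbf{Main obstacle.} The delicate point is the tangent-space / Jacobian computation: one must verify that the differentials $dQ_0,\dots,dQ_{2d-4}$ at a general point of $\Gamma$ span a codimension-$1$ subspace of the cotangent space of $\PP^d$, i.e.\ that the $(2d-3)\times d$ Jacobian matrix of the $Q_k$, evaluated at $a_i=t^i$, has rank $d-1$. This is a concrete but somewhat intricate linear-algebra computation with the convolution structure above; I expect it to reduce, after clearing the binomial weights, to showing that a certain Hankel-type or Vandermonde-type matrix is of full rank, which holds for all $t$. Everything else — the explicit form of $\hess(f)$, the identification of its coefficients with catalecticant minors, the set-theoretic statement from Hesse's Theorem — is routine.
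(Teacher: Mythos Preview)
Your final strategy---computing the Zariski tangent space of the base scheme $Z=V(Q_0,\dots,Q_{2d-4})$ at points of $\Gamma$ and showing it coincides with the tangent line to $\Gamma$---is exactly the paper's approach. The paper, however, bypasses your ``main obstacle'' entirely by exploiting the transitive $\SL(2,\CC)$ action on $\Gamma$: it suffices to check at the single point $p$ corresponding to $f=x_0^d$ (i.e.\ $a_0=1$, $a_i=0$ for $i>0$). There $f_{01}(p)=f_{11}(p)=0$, so the linearization of $\hess$ at $p$ collapses to the one surviving term
\[
f_{00}(p)\cdot\sum_i a_i\,\frac{\partial f_{11}}{\partial a_i}\;=\;d(d-1)\,x_0^{d-2}\sum_{i=2}^{d}i(i-1)a_i\, x_0^{d-i}x_1^{i-2},
\]
and the vanishing of all its coefficients gives $a_2=\cdots=a_d=0$, which is precisely the tangent line to $\Gamma$ at $p$. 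This replaces your Hankel/Vandermonde rank computation over all $t$ by a one-line check. Your initial detour (trying to match the span of $\H_{d,1}$ with $(I_\Gamma)_2$ as vector spaces) is indeed a dead end for $d\geq 5$, as you recognized; and once the tangent space is $1$-dimensional at \emph{every} point of $\Gamma=(\GN_{d,1})_{\mathrm{red}}$, the scheme is smooth of dimension $1$ along its entire support, hence reduced, so no separate check for embedded or extra components is required.
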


\begin{proof} We need to prove that at any point of $\Gamma$ the tangent hyperplanes to the  quadrics in the linear system $\H_{d,1}$ intersect only along the tangent line to $\Gamma$ at that point.  Since $\SL({ 2},\CC)$ acts transitively on $\Gamma$, it suffices to prove the assertion at a specific point of $\Gamma$, e.g., at the point $p$ of $\Gamma$ corresponding to the polynomial $f$ as in \eqref {eq:pol} with $a_0=1$ and  $a_i=0$ for $i>0$, i.e., the polynomial $f=x_0^d$. 

The linear polynomials in the variables $a_0,\ldots, a_d$ defining the tangent spaces in question are the coefficients in the variables $x_0,x_1$ of the polynomial  
$\sum_{i=0}^da_i\frac{\partial \mathrm{hess}(f)}{\partial a_i}(p)$. We have
\[
\begin{split}
&\sum_{i=0}^da_i\frac{\partial \mathrm{hess}(f)}{\partial a_i}(p)=\\
&=\sum_{i=0}^da_i\frac{\partial f_{00}}{\partial a_i}(p)f_{11}(p)+
f_{00}(p)\sum_{i=0}^da_i\frac{\partial f_{11}}{\partial a_i}(p)-2\sum_{i=0}^da_i\frac{\partial f_{01}}{\partial a_i}(p)f_{01}(p)=\\
&=d(d-1)x_0^{d-2}\sum_{i=2}^da_ii(i-1)x_0^{d-i}x_1^{i-2}
\end{split}
\]
because $f_{11}(p)=f_{01}(p)=0$. From the above relations, we see that the vanishing of the equations defining the tangent hyperplanes to the  quadrics in the linear system $\H_{d,1}$ give the solution $a_i=0$ for $i=2, \ldots, d$, which are just the equations of the tangent line to $\Gamma$ at $p$. \end{proof}

Since the linear system $\H_{d,1}$ defining $h_{d,1}$ consists of quadrics containing $\Gamma$, then all chords (and tangents) of $\Gamma$ are contracted to points by $h_{d,1}$. More precisely we have:

\begin{proposition}\label{prop:veron} The image via $h_{d,1}$ of $\Sec(\Gamma)-\Gamma$ is the $(d-2)$--Veronese image of $\PP^2$. 
\end{proposition}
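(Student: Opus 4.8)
I plan to reduce the statement to a single Hessian computation by exploiting the contraction of chords recorded above. Recall that $h_{d,1}$ is a genuine morphism on $\Sec(\Gamma)-\Gamma$, its indeterminacy locus being $\Gamma$, and that the quadrics of $\H_{d,1}$ all contain $\Gamma$, so $h_{d,1}$ contracts every chord and every tangent line of $\Gamma$ to a point. Since $\Gamma$ is a rational normal curve of degree $d\geq 3$ it has no trisecant lines, so through each point of $\Sec(\Gamma)-\Gamma$ there passes a unique secant or tangent line; thus $h_{d,1}|_{\Sec(\Gamma)-\Gamma}$ is constant on the fibres of the natural surjection $\sigma\colon\Sec(\Gamma)-\Gamma\to\Sym^2\Gamma$ and descends to a morphism $\bar h\colon\Sym^2\Gamma\to\Sigma(2d-4,1)$. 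Identifying $\Gamma\cong\PP(V)$ via $[\ell]\mapsto[\ell^d]$ we get $\Sym^2\Gamma\cong\Sym^2\PP(V)\cong\PP(\Sym^2V)\cong\PP^2$, an unordered pair $\{[\ell_1],[\ell_2]\}$ corresponding to the quadratic form $q=\ell_1\ell_2$. It then remains to identify $\bar h\colon\PP(\Sym^2V)\to\Sigma(2d-4,1)$.

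For this I would simply differentiate. Taking independent linear forms $\ell_1=a_1x_0+b_1x_1$ and $\ell_2=a_2x_0+b_2x_1$ and expanding $f_{00}f_{11}-f_{01}^2$ for $f=\ell_1^d+\ell_2^d$ yields
\[
\hess(\ell_1^d+\ell_2^d)=d^2(d-1)^2\,(a_1b_2-a_2b_1)^2\,(\ell_1\ell_2)^{d-2};
\]
since the chord joining $[\ell_1^d]$ and $[\ell_2^d]$ is contracted, every point $[\lambda\ell_1^d+\mu\ell_2^d]$ of it has image $[(\ell_1\ell_2)^{d-2}]$ (one also sees this directly by rescaling $\ell_1$ and $\ell_2$). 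Hence $\bar h([q])=[q^{d-2}]$, that is, $\bar h$ is the $(d-2)$-th Veronese map of $\PP(\Sym^2V)\cong\PP^2$: it is injective because $\CC[x_0,x_1]$ is a unique factorization domain, so $q^{d-2}$ determines $q$ up to a scalar, and it is given by a linear subsystem of $|\O_{\PP^2}(d-2)|$, so its image is a Veronese surface. For $d=3$ this reads $\bar h([q])=[q]$ and the image is $\PP^2$ itself, consistently with the stated phrasing.

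Finally, to conclude that the image of all of $\Sec(\Gamma)-\Gamma$ is the entire Veronese surface (and not just the dense open subset of points $[q^{d-2}]$ with $q$ squarefree), I would run the same computation on tangent lines: a point of the tangent line to $\Gamma$ at $[\ell^d]$ can be written $[\ell^{d-1}m]$ with $m$ not proportional to $\ell$, and the analogue of the formula above (or $\GL(2)$-equivariance of the Hessian applied to $\hess(x_0^{d-1}x_1)=-(d-1)^2x_0^{2(d-2)}$) gives image $[(\ell^2)^{d-2}]=\bar h([\ell^2])$. Thus $\sigma$ is onto $\Sym^2\Gamma=\PP(\Sym^2V)$ — reduced divisors are reached by chords, double divisors by tangent lines — and therefore $h_{d,1}(\Sec(\Gamma)-\Gamma)=\bar h(\PP(\Sym^2V))$ is exactly the $(d-2)$-Veronese image of $\PP^2$. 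The Hessian calculations here are routine; the points that require care are the descent of $h_{d,1}$ through $\Sym^2\Gamma$ (which relies on $\Gamma$ having no trisecant and on base-point freeness of $\H_{d,1}$ away from $\Gamma$) and the use of the tangent-line case to reach every point of the surface rather than only a dense open subset.
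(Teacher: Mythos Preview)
Your proof is correct and follows essentially the same approach as the paper: compute $\hess(\ell_1^d+\ell_2^d)$ on a secant, obtain $(\ell_1\ell_2)^{d-2}$ up to a nonzero scalar, and recognise the induced map $\Sym^2\Gamma\cong\PP(\Sym^2V)\to\Sigma(2d-4,1)$, $[q]\mapsto[q^{d-2}]$, as the $(d-2)$--Veronese. The paper phrases the factorisation via the morphism $\gamma:\Sec(\Gamma)\to\Sigma(2,1)$ and the identification $\phi:\Sigma(2,1)\cong\PP^2$, but this is exactly your $\sigma$ and your identification $\Sym^2\Gamma\cong\PP(\Sym^2V)$; you are simply a bit more explicit about why the descent is legitimate (no trisecants) and you treat the tangent--line case here rather than deferring it to the subsequent corollary, which is a harmless elaboration rather than a different argument.
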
 

\begin{proof} Let $x=[u_0,v_0]$, $y=[u_1,v_1]$ be two distinct points of $\PP^1$. The corresponding points on $\Gamma$ are the divisors $dx$ and $dy$, which have equations
\[
\alpha^d=0 \quad \text {and}\quad \beta^d=0, \quad \text{where}\quad 
\alpha =v_0x_0-u_0x_1 \quad \text{and}\quad \beta=v_1x_0-u_1x_1
\]
The chord of $\Gamma$ joining $dx$ and $dy$, is the pencil of divisors with equations
\[
\lambda \alpha^d+\mu \beta^d=0, \quad \text{with}\quad [\lambda,\mu]\in \PP^1.
\]
A straightforward computation shows that the image of all  divisors in this chord is the divisor $(d-2)(x+y)$, with equation $(\alpha\beta)^{d-2}$.

We have the isomorphism $\phi: \Sigma(2,1)\to \PP^2$, which sends the  divisor of degree 2 of $\PP^1$ defined by an equation of the form $ax_0^2+bx_0x_1+cx_1^2=0$ to the point of $\PP^2$ with homogeneous coordinates $[a,b,c]$. Let us interpret $\PP^2$ as 
$\Sigma(1,2)$, i.e., as a dual plane. So  $\phi$ can be interpreted as the map sending the degree 2 divisor with equation  $ax_0^2+bx_0x_1+cx_1^2=0$ to the line $ax_0+bx_1+cx_2=0$. 

We have also an obvious morphism $\gamma: \Sec(\Gamma)\to \Sigma(2,1)$, which send all the points on the chord joining two points $x,y$ of $\Gamma$ to the degree 2 divisor $x+y$ of $\PP^1\cong \Gamma$. By the above considerations, the restriction of $h_{d,1}$ to $\Sec(\Gamma)-\Gamma$, can be interpreted as the composition of the morphism $\gamma$, restricted to $\Sec(\Gamma)-\Gamma$, followed by $\phi$, followed by the $(d-2)$--Veronese map of $\PP^2$. The assertion follows. \end{proof}

\begin{corollary}\label{cor:tan} The image via $h_{d,1}$ of $\Tan(\Gamma)-\Gamma$ is a rational normal curve of degree $2(d-2)$.
\end{corollary}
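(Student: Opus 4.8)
The plan is to reuse the factorization of $h_{d,1}$ restricted to $\Sec(\Gamma)-\Gamma$ obtained in the proof of Proposition~\ref{prop:veron}: there this map equals $v_{d-2}\circ\phi\circ\gamma$, where $\gamma:\Sec(\Gamma)\to\Sigma(2,1)$ sends a point on the chord of $\Gamma$ through $x,y\in\Gamma\cong\PP^1$ to the degree-$2$ divisor $x+y$, $\phi:\Sigma(2,1)\to\PP^2$ is the isomorphism recalled there, and $v_{d-2}$ is the $(d-2)$-Veronese map of $\PP^2$. Since the tangent line to $\Gamma$ at $dx$ is the limit of the chords through $x$ and $y$ as $y\to x$, the morphism $\gamma$ contracts this whole tangent line to the double divisor $2x$. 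Hence $\gamma\big(\Tan(\Gamma)-\Gamma\big)$ is precisely the locus of double divisors in $\Sigma(2,1)$, and the closure of $h_{d,1}\big(\Tan(\Gamma)-\Gamma\big)$ is $v_{d-2}(\phi(C))$, where $C$ is the image in $\PP^2$ of that locus.

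First I would identify $C$ explicitly. Writing $x=[u,v]\in\PP^1$, the divisor $2x$ has equation $(vx_0-ux_1)^2=v^2x_0^2-2uv\,x_0x_1+u^2x_1^2=0$, so under $\phi$ it corresponds to $[v^2,-2uv,u^2]\in\PP^2$; as $[u,v]$ varies this traces out a smooth conic $C\subset\PP^2$, parametrized by the complete linear system $|\O_{\PP^1}(2)|$. Thus the curve in question is the image of a smooth conic under the $(d-2)$-uple Veronese embedding of $\PP^2$.

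It then remains to check that this image is a rational normal curve of degree $2(d-2)$. Composing the degree-$2$ parametrization $[u,v]\mapsto[v^2,-2uv,u^2]$ of $C$ with $v_{d-2}$, the curve is the image of the map $\PP^1\to\PP^{N(d-2,2)}$ whose coordinates are the monomials $a^ib^jc^k$ with $i+j+k=d-2$ evaluated at $(a,b,c)=(v^2,-2uv,u^2)$; each becomes, up to a scalar, the binary form $u^{\,j+2k}v^{\,2i+j}$ of degree $2(d-2)$. A direct check shows that $2i+j$ takes every value from $0$ to $2(d-2)$ as $(i,j,k)$ runs over triples summing to $d-2$ (take $i=\lfloor m/2\rfloor$, $j=m-2i\in\{0,1\}$), so these forms span $H^0\big(\PP^1,\O(2(d-2))\big)$; equivalently, the restriction map $H^0(\PP^2,\O(d-2))\to H^0(C,\O_C(d-2))$ is surjective. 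Consequently the image spans a $\PP^{2(d-2)}$ and is there parametrized by the complete linear system $|\O_{\PP^1}(2(d-2))|$, i.e.\ it is a rational normal curve of degree $2(d-2)$, as claimed.

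I do not expect a serious obstacle: granted Proposition~\ref{prop:veron}, everything is formal apart from the final linear-span computation, which is the standard fact that the $m$-uple Veronese image of a rational normal curve of degree $e$ is a rational normal curve of degree $me$. The only point needing a little care is to confirm that $\gamma$ contracts $\Tan(\Gamma)-\Gamma$ onto the \emph{whole} conic $C$ rather than a proper subset, which is immediate from describing tangent lines as limits of chords.
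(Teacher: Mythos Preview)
Your argument is correct and follows the same route as the paper: both identify the image as the $(d-2)$--Veronese image of the diagonal conic in $\PP^2\cong\Sigma(2,1)$ via the factorization of Proposition~\ref{prop:veron}. The paper simply states that ``the image of $\Tan(\Gamma)-\Gamma$ coincides with the image under the $(d-2)$--Veronese map of $\PP^2$ of the $2$--Veronese image of $\PP^1$ in $\PP^2$'' and lets the conclusion follow; you supply the explicit parametrization and the linear-span check, which the paper omits as standard.
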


\begin{proof} Following the argument in the proof of Proposition \ref {prop:veron}, we see that the image of $\Tan(\Gamma)-\Gamma$ coincides with the image under the $(d-2)$--Veronese map of $\PP^2$ of the 2--Veronese image of $\PP^1$ in $\PP^2$. The assertion follows. \end{proof}

By Proposition \ref {prop:GN}, the map $h_{d,1}$ factors through the blow--up $\widetilde \Sigma(d,1)$ of $\Sigma(d,1)\cong \PP^d$ along $\Gamma$ and a morphism 
$\tilde h_{d,1}: \widetilde \Sigma(d,1)\to  \Sigma(2d-4,1)$. 
Let $E$ be the exceptional divisor of this blow--up. For the normal bundle $N_{\Gamma|\PP^d}$ we have
\[
N_{\Gamma|\PP^d}\cong \O_{\PP^1}(d+2)^{\oplus d-1},
\]
hence
\[
E=\PP(N_{\Gamma|\PP^d}^\vee)= \PP(\O_{\PP^1}(-d-2)^{\oplus d-1})\cong \PP(\O_{\PP^1}^{\oplus d-1})=\PP^1\times \PP^{d-2}.
\]

\begin{proposition}\label{prop:exc} 
For each point in $\Gamma$, corresponding to a divisor $dx$, with $x\in \PP^1$, the image via $\tilde h_{d,1}$ of the $\PP^{d-2}$ fibre of $E$ over $dx\in \Gamma$, consists of all divisors of the form $(d-2)x+D$, with $D$ any divisor in $\Sigma(d-2,1)$.
\end{proposition}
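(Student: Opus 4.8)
The plan is to exploit the $\SL(2,\CC)$--equivariance of the Hessian map to reduce to one point of $\Gamma$, and then to read off $\tilde h_{d,1}|_E$ from the linearisation of $\hess$. Since $\SL(2,\CC)$ acts transitively on $\Gamma$, and the Hessian map, the blow--up and $\tilde h_{d,1}$ are all $\SL(2,\CC)$--equivariant, it suffices to treat the point $p\in\Gamma$ corresponding to $f=x_0^d$, that is, to the divisor $dx$ with $x=[0:1]$. By Remark \ref{rem:con} the tangent line to $\Gamma$ at $p$ is the line of $\PP(\Sym^d(V))$ spanned by $x_0^d$ and $x_0^{d-1}x_1$, so the fibre $E_p\cong\PP^{d-2}$ of $E$ over $p$ is canonically $\PP\big(\Sym^d(V)/\langle x_0^d,\,x_0^{d-1}x_1\rangle\big)$: a point $[\bar g]\in E_p$ is the point cut out on $E$ by the strict transform of the arc $t\mapsto[x_0^d+tg]$ in $\PP(\Sym^d(V))$, where $g\in\Sym^d(V)$ is any lift of $\bar g$, and every point of $E_p$ is obtained in this way.

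Then I would compute $\tilde h_{d,1}$ at such a point. As $\tilde h_{d,1}$ is a morphism (Proposition \ref{prop:GN}), its value at $[\bar g]\in E_p$ equals $\lim_{t\to0}\big[\hess(x_0^d+tg)\big]$. Using $(x_0^d)_{11}=(x_0^d)_{01}=0$ and $(x_0^d)_{00}=d(d-1)x_0^{d-2}$, the Leibniz rule gives
\[
\hess(x_0^d+tg)=\big(d(d-1)x_0^{d-2}+t\,g_{00}\big)\big(t\,g_{11}\big)-\big(t\,g_{01}\big)^2
= t\Big(d(d-1)\,x_0^{d-2}g_{11}+t\big(g_{00}g_{11}-g_{01}^2\big)\Big),
\]
so after cancelling the factor $t$ and letting $t\to0$ we obtain the point $\big[x_0^{d-2}g_{11}\big]\in\PP(\Sym^{2d-4}(V))$. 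Here $g_{11}=\partial_{x_1}^2 g\neq 0$ precisely because $\bar g\neq0$ in $\Sym^d(V)/\langle x_0^d,\,x_0^{d-1}x_1\rangle=\Sym^d(V)/\ker(\partial_{x_1}^2)$, so the limit is well defined and $\big[x_0^{d-2}g_{11}\big]$ depends only on $[\bar g]$. This is the first--order computation already performed in the proof of Proposition \ref{prop:GN}.

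Finally, the map $\partial_{x_1}^2\colon\Sym^d(V)\to\Sym^{d-2}(V)$ is surjective and has kernel exactly $\langle x_0^d,\,x_0^{d-1}x_1\rangle$, hence induces an isomorphism $\Sym^d(V)/\langle x_0^d,\,x_0^{d-1}x_1\rangle\xrightarrow{\ \sim\ }\Sym^{d-2}(V)$. Therefore $g_{11}$ runs over all of $\Sym^{d-2}(V)$ as $[\bar g]$ runs over $E_p$, and $\tilde h_{d,1}(E_p)$ is exactly the set of divisors $(d-2)[0:1]+D$ with $D=\mathrm{div}(g_{11})$ an arbitrary divisor in $\Sigma(d-2,1)$. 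This proves the assertion for $p$, and by equivariance for every point of $\Gamma$. The only delicate point is the bookkeeping in the first paragraph: pinning down the identification of $E_p$ with $\PP(N_{\Gamma|\PP^d,p})$ compatibly with the conormal presentation $E=\PP(N_{\Gamma|\PP^d}^\vee)$ recalled above, and checking that arcs of the very special form $t\mapsto[x_0^d+tg]$ already reach every point of $E_p$; after that, everything reduces to the one--line linearisation of $\hess$ displayed above.
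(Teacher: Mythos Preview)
Your proof is correct and follows essentially the same route as the paper: compute the limit of $\hess$ along a pencil $\alpha^d+t g$ through a point of $\Gamma$ and observe that the result is $\alpha^{d-2}$ times the second polar of $g$ at $x$. The only cosmetic difference is that you use $\SL(2,\CC)$--equivariance to reduce to the point $x_0^d$ and then argue carefully that $\partial_{x_1}^2$ gives an isomorphism $\Sym^d(V)/\langle x_0^d,x_0^{d-1}x_1\rangle\to\Sym^{d-2}(V)$, whereas the paper works at a general point $\alpha^d$ and simply remarks that the second polar of a general $f$ is a general form of degree $d-2$.
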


\begin{proof} To understand the image of $E$ via the Hessian map, we consider a point of $\Gamma$, corresponding to the divisor of equation
\[
\alpha^d=0, \quad \text {with}\quad \alpha=u_0x_0+u_1x_1,
\]
i.e., the point corresponding to the divisor $dx$, with $x=[u_1,-u_0]$. Take $f(x_0,x_1)$ a general homogeneous polynomial of degree $d$, and consider the pencil of divisors with equations
\[
\alpha^d+tf=0, \quad \text{with}\quad t\in \CC.
\]
A straightforward calculation shows that the limit of the Hessian of the polynomial in this pencil when $t$ tends to $0$, is the divisor with equation
\[
\alpha^{d-2}\Big (u_0^2\frac {\partial^2f}{\partial x_1^2}+u_1^2\frac {\partial^2f}{\partial x_0^2}-2u_0u_1\frac {\partial^2f}{\partial x_0\partial x_1}\Big)=0.
\]
The polynomial in parenthesis is the second polar of $f$ with respect to the point $x=[u_1,-u_0]$, and since $f$ is a general polynomial of degree $d$, it is a general polynomial of degree $d-2$. The assertion follows.  \end{proof}

\begin{proposition}\label{thm:fin} The Hessian map $h_{d,1}$ is generically finite onto its image, unless $d=3$, in which case it has general fibres of dimension 1, i.e., the chords of $\Gamma$.
\end{proposition}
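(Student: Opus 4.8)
The plan is to reduce the statement to a computation on the blow-up $\widetilde\Sigma(d,1)$ and then separately treat the cubic case. For $d\geq 4$, I would argue by a dimension/differential count. We know that $h_{d,1}$ factors through $\tilde h_{d,1}:\widetilde\Sigma(d,1)\to\Sigma(2d-4,1)$, where $\widetilde\Sigma(d,1)$ has dimension $d$ and the target has dimension $2d-4\geq d$ exactly when $d\geq 4$. So it suffices to exhibit a single hypersurface $F$ (equivalently, a point of $\Sigma(d,1)$ off $\Gamma$) at which the differential $\mathrm{d}h_{d,1}$ is injective; generic finiteness follows from upper semicontinuity of fibre dimension. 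A convenient choice is a general binary form, or even better a general form of Waring rank $2$: writing $f=\ell_1^d+\ell_2^d$ with $\ell_1,\ell_2$ general linear forms, the Hessian $\hess(f)$ is, up to a scalar, $(\ell_1\ell_2)^{d-2}(\text{something})$, and one computes $\mathrm{d}h$ along the $d+1$ coordinate directions in $\Sigma(d,1)$ directly. In fact it is cleaner to test injectivity at a general $f$ without the rank-$2$ assumption: the derivative of $\hess(f)$ in the direction of a perturbation $g$ is $\sum_{i,j}\frac{\partial^2 g}{\partial x_i\partial x_j}\,\mathrm{cof}_{ij}(\mathrm{Hess}(f))$, the trace pairing of $\mathrm{Hess}(g)$ against the adjugate of $\mathrm{Hess}(f)$; for $r=1$ this is the binary form $g_{00}f_{11}+g_{11}f_{00}-2g_{01}f_{01}$ of degree $2d-4$, and one must show this expression vanishes identically only for $g=0$ when $f$ is general. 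This is a linear-algebra statement about a map $\Sym^d(V)\to\Sym^{2d-4}(V)$; for general $f$ it is enough to check it at one $f$, e.g.\ $f=x_0^d+x_1^d$, where $f_{00}=d(d-1)x_0^{d-2}$, $f_{11}=d(d-1)x_1^{d-2}$, $f_{01}=0$, so the derivative of the Hessian in direction $g$ is $d(d-1)\big(x_1^{d-2}g_{00}+x_0^{d-2}g_{11}\big)$, and one checks (by looking at which monomials of $g$ survive) that this has a nontrivial kernel of dimension exactly $2$ spanned by $x_0^d$ and $x_1^d$ and by the infinitesimal action of $\SL(2,\CC)$ — i.e.\ the kernel is exactly the tangent space to the orbit-plus-scalars, which has the same dimension as the generic fibre, giving generic finiteness.

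Actually, rather than chasing the exact kernel at the special point $x_0^d+x_1^d$ (where the stabilizer is positive-dimensional and the count is delicate), I would prefer to pick $f$ with distinct roots and no extra symmetry, or simply invoke that generic finiteness of $\tilde h_{d,1}$ is equivalent to: the image $H_{d,1}$ has dimension $d$, equivalently the linear system $\H_{d,1}$ of quadrics through $\Gamma$ is not composed with a pencil and separates a general tangent direction. The first half of this is immediate once one exhibits the Veronese surface and the rational normal curve of degree $2(d-2)$ inside $H_{d,1}$ as in Proposition~\ref{prop:veron} and Corollary~\ref{cor:tan} together with the $(d-2)$-dimensional images of the exceptional fibres from Proposition~\ref{prop:exc}: these already force $\dim H_{d,1}\geq d-1$, and pushing a general tangent direction out of $\Tan(\Gamma)$ gives the last dimension. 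Combining $\dim H_{d,1}=\dim\Sigma(d,1)=d$ with the fact (Proposition~\ref{prop:GN}) that $h_{d,1}$ is a morphism off $\Gamma$ yields generic finiteness for $d\geq 4$.

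For $d=3$ the target is $\Sigma(2,1)\cong\PP^2$, which has dimension $2<3=\dim\Sigma(3,1)$, so the general fibre has dimension $\geq 1$; on the other hand by Proposition~\ref{prop:veron} the image of $\Sec(\Gamma)-\Gamma=\Sigma(3,1)-\Gamma$ (since secant varieties of the twisted cubic fill $\PP^3$) is the $1$-Veronese image of $\PP^2$, i.e.\ all of $\PP^2$, and the computation there shows that every point $\lambda\alpha^3+\mu\beta^3$ of the chord through $3x$ and $3y$ maps to the divisor $x+y$; thus the general fibre is exactly the chord of $\Gamma$, a curve of dimension $1$. This is the content of the exceptional clause. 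The main obstacle is the $d\geq 4$ injectivity-of-differential step at a sufficiently general point: one must either carry out the trace-pairing rank computation honestly, or organize the dimension count so that the images of $\Tan(\Gamma)-\Gamma$, the Veronese surface, and the exceptional fibres are seen to span enough dimensions — I expect the cleanest route is the explicit perturbation computation at a general binary form with distinct roots, which avoids the degeneracies of the symmetric point $x_0^d+x_1^d$.
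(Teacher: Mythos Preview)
Your treatment of the case $d=3$ is fine and matches the paper's: the dimension drop forces positive-dimensional fibres, and the identification of the general fibre with a chord of $\Gamma$ comes straight from Proposition~\ref{prop:veron}.

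For $d\geq 4$ there is a genuine gap. You correctly write down the differential of the (affine) Hessian at $f$ in the direction $g$ as
\[
dH_f(g)=g_{00}f_{11}+g_{11}f_{00}-2g_{01}f_{01},
\]
and you correctly diagnose why the test point $f=x_0^d+x_1^d$ is a bad choice: it lies on a secant line of $\Gamma$, which is contracted by $h_{d,1}$, so the projective differential there necessarily has a kernel (the tangent direction $x_0^d-x_1^d$ along the chord). But having recognised this, you do not actually carry out the rank computation at any point where it succeeds; ``pick $f$ with distinct roots and no extra symmetry'' is a hope, not an argument. Your alternative dimension count via the images of $E$, $\Sec(\Gamma)$ and $\Tan(\Gamma)$ only reaches $\dim H_{d,1}\geq d-1$ honestly: the image of $E$ is indeed $(d-1)$-dimensional, but the sentence ``pushing a general tangent direction out of $\Tan(\Gamma)$ gives the last dimension'' is exactly the missing step restated. (One could close it by observing that a general Hessian is reduced, hence not of the form $(d-2)x+D$, so $H_{d,1}$ strictly contains the image of $E$; but you do not say this, and in the paper's ordering that fact is Proposition~\ref{prop:smooth}, proved afterwards.)

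The paper fills precisely this gap by choosing a test point off $\Sec(\Gamma)$, namely $f=x_0^{d-2}x_1^{2}$, and computing the Jacobian of $h_{d,1}$ there explicitly: writing $\hess(f)=\sum_p Q_p\,x_0^{2d-4-p}x_1^p$ with $Q_p$ quadratic in the $a_i$, one checks that $\partial Q_p/\partial a_q$ evaluated at this $f$ is supported on the diagonal $p=q$, and that the diagonal entries are all nonzero for $d\geq 4$ with the single exception $d=8$, handled separately at $x_0^{3}x_1^{5}$. This is the concrete calculation your plan promises but does not deliver.
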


\begin{proof} In the case $d=3$, the domain of $h_{3,1}$ is $\Sigma(3,1)\cong \PP^3$ and the range is $\Sigma(2,1)\cong \PP^2$. So the map cannot be generically finite.  Actually by Proposition \ref {prop:exc}, the image of $E$ is all of $\Sigma(2,1)$ and it has general fibres of dimension 1. Since $\Sec(\Gamma)=\PP^3$, also Proposition \ref {prop:veron} tells us that the image is all of $\Sigma(2,1)$. The fibres are the chords of $\Gamma$. 

Assume now $d>3$. 
In order to prove the proposition it suffices to exhibit a homogeneous polynomial of degree $d$ in $x_0,x_1$, such that the differential of $h_{d,1}$ there is of maximal rank $d$. 

Let  { $f$ be a general polynomial of degree $d$ like in \eqref {eq:pol}}. Then, up to a factor, $\hess(f)$ is
$$\det \begin{pmatrix}\sum_{i=0}^{d-2}{{d-2}\choose i}a_ix_0^{d-2-i}x_1^{i}&\sum_{i=0}^{d-2}{{d-2}\choose i}a_{i+1}x_0^{d-2-i}x_1^{i}\\
\sum_{i=0}^{d-2}{{d-2}\choose i}a_{i+1}x_0^{d-2-i}x_1^{i}&\sum_{i=0}^{d-2}{{d-2}\choose i}a_{i+2}x_0^{d-2-i}x_1^{i}\end{pmatrix}.$$

Computing the determinant we get that the coefficient of $x_0^{2d-4-p}x_1^{p}$ is
$$Q_p=\sum_{i=0}^p{{d-2}\choose i}{{d-2}\choose{p-i}}a_ia_{p-i+2}-\sum_{i=0}^p{{d-2}\choose i}{{d-2}\choose{p-i}}a_{i+1}a_{p-i+1}.$$
Computing the derivative with respect to $a_q$ we get the enter of place $(p,q)$ of the Jacobian matrix of $h_{d,1}$ which is
$$\left[{{d-2}\choose q}{{d-2}\choose{p-q}}+{{d-2}\choose {q-2}}{{d-2}\choose{p-q+2}}-2{{d-2}\choose {q-1}}{{d-2}\choose{p-q+1}}\right]a_{p-q+2}$$
We claim that when $p=q$ the coefficients in square brackets are all nonzero for $d\geq 4$ and  $d\neq 8$. This is equivalent to say
that the rank of the Jacobian at $x_0^{d-2}x_1^{2}$ is maximal, as wanted.  

First of all, one verifies directly that the assertion holds for $p=q=0$ and $p=q=d$. For  $0< p=q<d$ the coefficients are
\[
\begin{split}
&\left[{{d-2}\choose q}+{{d-2}\choose {q-2}}{{d-2}\choose{2}}-2{{d-2}\choose {q-1}}{{d-2}\choose{1}}\right]=\\
&={{d-2}\choose {q-1}}\left[\frac{(d-1)(dq^2-5dq+2d+4q)}{2q(d-q)}\right].
\end{split}
\]
Thus we are reduced to show that $(dq^2-5dq+2d+4q)=dq(q-5)+2d+4q$ is nonzero.
This is immediate when $q\ge 5$. It remains to check the cases:\\
\begin{inparaenum}
\item[$\bullet$]$q=1$, here we get $-2d+4$ which is nonzero when $d\geq 4$;\\
\item[$\bullet$]$q=2$, here we get $-4d+8$ which is nonzero when $d\geq 4$;\\
\item[$\bullet$]$q=3$, here we get $-4d+12$ which is nonzero when $d\geq 4$;\\
\item[$\bullet$]$q=4$, here we get $-2d+16$ which is nonzero when $d\geq 4$ except for  $d=8$.
\end{inparaenum}

In the case $d=8$, with similar arguments one sees that the differential of $h_{8,1}$ has maximal rank at $x_0^3x_1^5$. We leave the details to the reader. \end{proof}

We can be even more precise, and prove the following:

{
\begin{proposition}\label{prop:fiber} One has:\\
\begin{inparaenum}
\item[$\bullet$] the fiber $h_{d,1}^{-1}(h_{d,1}(x_0^2x_1^{d-2}))$ consists schematically of the single point $\{x_0^2x_1^{d-2}\}$ for $d\ge 5$, $d\neq 8$;\\
\item[$\bullet$] the fiber $h_{8,1}^{-1}(h_{8,1}(x_0^3x_1^{5}))$ consists schematically of the single point $\{x_0^3x_1^{5}\}$ .
\end{inparaenum}
\end{proposition}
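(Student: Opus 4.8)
The plan is to handle the fiber in two steps: first determine it as a set, then read off its multiplicity from the differential computation already carried out in the proof of Proposition~\ref{thm:fin}. As a preliminary one computes $\hess(x_0^2x_1^{d-2})=-2(d-1)(d-2)\,x_0^2x_1^{2d-6}$, so that $h_{d,1}(x_0^2x_1^{d-2})\in\Sigma(2d-4,1)$ is the point corresponding to the \emph{monomial} $x_0^2x_1^{2d-6}$, a divisor supported on $[0:1]$ and $[1:0]$; likewise $\hess(x_0^3x_1^5)$ is a nonzero multiple of $x_0^4x_1^8$ when $d=8$. Hence it suffices to prove that the only $g\in\Sigma(d,1)\setminus\Gamma$ whose Hessian is proportional to that monomial is the given one, and that it occurs with multiplicity one.

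The key structural input I would use is a precise factorisation of $\hess(g)$ for $g\notin\Gamma$ with $s$ distinct roots. Set $B:=\gcd(g_0,g_1)$, which equals $\prod_p\ell_p^{m_p-1}$ over the roots $p$ of $g$ of multiplicity $m_p$ (so $\deg B=d-s$), and put $\tilde g_i:=g_i/B$, a pair of coprime forms of degree $s-1$. A direct computation with Euler's relations then shows that $\hess(g)$ equals a nonzero scalar (namely $\tfrac{d-1}{s-1}$) times $B^2\,W(\tilde g_0,\tilde g_1)$, where $W$ is the bihomogeneous Wronskian, of degree $2s-4$. From this I would extract two facts: (a) the zero divisor of $W$ is the ramification divisor of the map $[\tilde g_0:\tilde g_1]\colon\PP^1\to\PP^1$ of degree $s-1$, and since this map is unramified at every root of $g$ (a local computation at a root, using $g\notin\Gamma$), $W$ does not vanish at any root of $g$; (b) $W$, being such a Wronskian, vanishes to order at most $s-2$ at every point of $\PP^1$. (In particular this recovers Hesse's theorem for $r=1$: $\hess(g)\equiv 0$ forces $s=1$, i.e.\ $g\in\Gamma$.)

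Now assume $g\notin\Gamma$ and $\hess(g)$ is proportional to $x_0^2x_1^{2d-6}$. Then $B^2$ and $W$ are both monomials in $x_0,x_1$; by (a) the zeros of $B$ (the multiple roots of $g$) are disjoint from the zeros of $W$, so each of $[0:1]$, $[1:0]$ is either a multiple root of $g$ or a zero of $W$, and I would run the four cases. If both are multiple roots of $g$, then $W$ is constant, so $s=2$ and $g=c\,x_0^ix_1^{d-i}$; comparing Hessians ($\hess\propto x_0^{2(i-1)}x_1^{2(d-i-1)}$) forces $i=2$. If exactly one of the two points, say $[1:0]$, is not a root of $g$, then the whole power $x_1^{2d-6}$ of $\hess(g)$ comes from $W$, so $2s-4=\deg W=\operatorname{ord}_{[1:0]}W=2d-6$ and $2d-6\leq s-2$, whence $d\leq 3$, impossible; the symmetric sub-case forces $s=3$ and $\operatorname{ord}_{[0:1]}W=2\leq s-2=1$, again impossible. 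If neither point is a root of $g$, then $g$ is squarefree, $s=d$, $W\propto x_0^2x_1^{2d-6}$, and $\operatorname{ord}_{[1:0]}W=2d-6\leq s-2=d-2$ forces $d\leq 4$, excluded since $d\geq 5$. Hence $s=2$ and $g=c\,x_0^2x_1^{d-2}$: the fiber is $\{x_0^2x_1^{d-2}\}$ as a set. For $d=8$ and the monomial $x_0^4x_1^8$ the identical discussion applies (the last case now gives $2d-8\leq d-2$, i.e.\ $d\leq 6<8$, and the two intermediate cases are excluded by the bound $\operatorname{ord}_pW\leq s-2$ applied at $[1:0]$), so the fiber of $h_{8,1}$ over $h_{8,1}(x_0^3x_1^5)$ is $\{x_0^3x_1^5\}$ set-theoretically.

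It remains to see that the fiber is reduced at that point. Its Zariski tangent space there is the kernel of the differential of $h_{d,1}$, and by the proof of Proposition~\ref{thm:fin} this differential has maximal rank $d$ at $x_0^{d-2}x_1^2$ --- hence, after the coordinate swap $x_0\leftrightarrow x_1$, also at $x_0^2x_1^{d-2}$ --- for $d\geq 4$, $d\neq 8$, and at $x_0^3x_1^5$ when $d=8$; an injective differential makes the fiber a reduced point there, which together with the set-theoretic description finishes both assertions. The step I expect to be the main obstacle is the factorisation $\hess(g)\propto B^2\,W(\tilde g_0,\tilde g_1)$ with the \emph{exact} exponents $m_p-1$ in $B$ and the bound $\operatorname{ord}_pW\leq s-2$: one must show that the Hessian divisor splits between the base--locus part $B^2$ and the Wronskian part $W$ precisely as stated, and that $W$ cannot concentrate at a single point. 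This is also where $d=4$ is special --- in the last case of the discussion the inequality $2d-6\leq d-2$ becomes an equality, leaving room for the additional components of the fiber of $h_{4,1}$; and for $d=8$ at the point $x_0^2x_1^6$ it is the reducedness, not the set-theoretic part, that fails, which is why one passes to $x_0^3x_1^5$.
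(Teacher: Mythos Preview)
Your proof is correct and takes a genuinely different route from the paper's own argument. The paper works entirely with the explicit coefficients
\[
Q_p=\sum_{i=0}^p\binom{d-2}{i}\binom{d-2}{p-i}\bigl(a_ia_{p-i+2}-a_{i+1}a_{p-i+1}\bigr)
\]
of $\hess(f)$ and eliminates the $a_i$'s one by one: first, assuming $a_0\neq 0$, it shows by induction on $p$ that $a_i=t^i$, forcing $f\in\Gamma$; then, assuming $a_0=0$, it peels off $a_1=\cdots=a_{d-3}=0$ from $Q_0,Q_2,\ldots,Q_{2d-8}$ and finishes with $Q_{2d-5}$, $Q_{2d-4}$. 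Reducedness is then read off from Proposition~\ref{thm:fin} exactly as you do.

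Your approach replaces this coefficient chase by the structural identity $\hess(g)=\tfrac{d-1}{s-1}\,B^2\,W(\tilde g_0,\tilde g_1)$ with $B=\gcd(g_0,g_1)=\prod_p\ell_p^{m_p-1}$, and by two geometric facts about the Wronskian of the coprime pair $(\tilde g_0,\tilde g_1)$: it never vanishes at a root of $g$, and it vanishes to order at most $s-2$ anywhere (the ramification bound for a degree $s-1$ map $\PP^1\to\PP^1$). These claims are correct; the identity follows from $\hess(g)=W(g_0,g_1)$ together with Euler's relation applied to $\tilde g_0,\tilde g_1$ and to $B$ (one finds that the cross term $B\bigl[B_0(\tilde g_0(\tilde g_1)_1-(\tilde g_0)_1\tilde g_1)-B_1(\tilde g_0(\tilde g_1)_0-(\tilde g_0)_0\tilde g_1)\bigr]$ equals $\tfrac{d-s}{s-1}B^2W$), and the ``unramified at roots of $g$'' statement is the local computation you indicate. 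The four-case analysis on which of $[0{:}1]$, $[1{:}0]$ lie in $\operatorname{supp}B$ versus $\operatorname{supp}W$ is clean and makes the numerology transparent: the inequality $2d-6\le d-2$ (resp.\ $2d-8\le d-2$) is precisely what excludes $d\le 4$ (resp.\ $d\le 6$) in the squarefree case, and the passage from $x_0^2x_1^6$ to $x_0^3x_1^5$ at $d=8$ is explained as a failure of reducedness rather than of the set-theoretic fiber, in agreement with the paper's remark that the former fiber has multiplicity $4$.

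What the paper's approach buys is complete elementarity and self-containment: no auxiliary identity needs to be established. What your approach buys is a conceptual explanation of the exponent constraints and of the special values $d=4$ and $d=8$, plus a factorisation of $\hess(g)$ that is of independent interest and could be reused elsewhere. One small suggestion: you assert the factorisation and the bound $\operatorname{ord}_pW\le s-2$ rather than proving them; since you correctly identify these as the crux, it would strengthen the write-up to include the short Euler-relation derivation and the ramification argument explicitly.
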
 }
\begin{proof} We prove the assertion for $d\neq 8$. The case $d=8$ can be worked similarly and we leave the details to the reader, { we just note here that {$h_{8,1}^{-1}(h_{8,1}(x_0^2x_1^{6}))$} consists of the point $\{x_0^2x_1^{6}\}$
with multiplicity $4$.}
As in the proof of Proposition \ref {thm:fin}, we see that the differential of $h_{d,1}$ is of maximal rank at $f=x_0^2x_1^{d-2}$. 
The goal is to show that the fiber of $h_{d,1}$ at $f=x_0^2x_1^{d-2}$ consists of $f$ alone.
This form has $a_i=0$ for $i\neq d-2$ and  $\hess(f)=x_0^2x_1^{2d-6}$ up to a scalar.
Recall the notation $Q_p$ for the coefficients of $x_0^{2d-4-p}x_1^{p}$ in the Hessian (see the proof of Proposition \ref {thm:fin}). Then the fiber of $f=x_0^2x_1^{d-2}$ is cut out by the equations $Q_p=0$ for all $p=0,\ldots, 2d-4$ except $p=2d-6$. 

We consider a form { $f$ as in \eqref {eq:pol}} in such a fiber.
Suppose first that $a_0\neq 0$, so that we may assume $a_0=1$.
Then we show that there exists a $t\in \CC$ such that $a_i=t^i$, namely
$f=(x+ty)^d$, corresponding to a point on the rational normal curve $\Gamma$, which is not possible.
Indeed set $a_1=t$.
The equation $Q_0=0$ is $a_0a_2-a_1^2=0$, which implies $a_2=t^2$. However it is important for us to record that
there is a unique possible $a_2$ given $a_0=1$ and $a_1$.
The equation $Q_1=0$ is $(d-2)a_0a_3+\cdots=0$ where the other monomials involve $a_i$ with $i\leq 2$. Hence there is a unique $a_3$ given $a_0=0$ and $a_1, a_2$.
Continuing in this way, at step $p$ the equation $Q_p=0$ is
${{d-2}\choose p}a_0a_{p+2}+\cdots=0$ where the other monomials involve $a_i$ with $i\leq p+1$. Hence there is a unique $a_{p+2}$ given $a_0=1$ and $a_1,\ldots, a_{p+1}$.

This argument may be continued until we get to the equation $Q_{d-2}=0$, since $d-2<2d-6$ for $d\geq 5$.
We conclude that given $a_0=1$ the $a_i$ are uniquely determined, and this forces
$a_i=t^i$ since this is a solution.

The second step is to show that if $a_0=0$ then $a_i=0$ for $i\neq d-2$, which will prove the theorem.
The equation $Q_0=0$ is $a_0a_2-a_1^2$ and forces now $a_1=0$.
The equation $Q_2=0$ has the form ${ \left[{{d-2}\choose 2}-(d-2)^2\right]}a_2^2+\cdots=0$ where the other monomials
contain $a_0$ or $a_1$. Hence we get $a_2=0$.
The equation $Q_4=0$ has the form ${ \left[{{d-2}\choose {3}}{{d-2}\choose {1}} -{{d-2}\choose 2}^2\right]}a_3^2+\cdots=0$ where the other monomials
contain at least one among $a_0,\ldots,  a_2$. Hence we get $a_3=0$.
Continuing in this way we consider for $k=0,\ldots d-4$ the equation 
$Q_{2k}=0$ which has the form ${ \left[{{d-2}\choose {k+1}}{{d-2}\choose {k-1}} -{{d-2}\choose k}^2\right]}a_{k+1}^2+\cdots=0$ where the other monomials
contain at least one among $a_0,\ldots a_k$. Hence we get $a_{k+1}=0$.

Summing up, we get $a_i=0$ for $i\leq d-3$.
Recall we cannot use the equation $Q_{2d-6}=0$. We concentrate now on the last two equations
$$Q_{2d-5}= -a_{d-2}a_{d-1}+a_{d-3}a_d=0, \quad 
Q_{2d-4}= -a_{d-1}^2+a_{d-2}a_d=0.$$

From $Q_{2d-5}=0$ we get $a_{d-2}a_{d-1}=0$, then one of the two factors vanishes.
If $a_{d-2}=0$ then from $Q_{2d-4}=0$ we get $a_{d-1}=0$, hence $f=x_1^d$ that correspond to a point on the rational normal curve, which may be excluded as before. If $a_{d-2}\neq 0$ and $a_{d-1}=0$
then from  $Q_{2d-4}=0$ we get $a_d=0$, so that $f=x_0^2x_1^{d-2}$, as we wanted.
\end{proof}

{ The previous Proposition is the basis for the next  fundamental
Theorem. 
\begin{thm}\label{thm:birat}
The Hessian map $h_{d,1}$ is birational onto its image for $d\ge 5$.
\end{thm}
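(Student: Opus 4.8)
The plan is to use Proposition~\ref{prop:fiber} together with an irreducibility/specialization argument. By Proposition~\ref{thm:fin}, for $d > 3$ the map $h_{d,1}$ is generically finite onto its image, so the general fibre is a finite set of points. To prove birationality it suffices to show that this general fibre is a single reduced point. The key observation is that Proposition~\ref{prop:fiber} already exhibits \emph{one} fibre (over $h_{d,1}(x_0^2x_1^{d-2})$ when $d \neq 8$, or over $h_{8,1}(x_0^3x_1^5)$ when $d = 8$) which, scheme-theoretically, is the single reduced point $\{x_0^2x_1^{d-2}\}$ (resp.\ $\{x_0^3x_1^5\}$).

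The first step is to pass to a projective model of $h_{d,1}$ where semicontinuity applies: since $\H_{d,1}$ cuts out $\Gamma$ schematically (Proposition~\ref{prop:GN}), $h_{d,1}$ lifts to the morphism $\tilde h_{d,1}\colon \widetilde\Sigma(d,1) \to \Sigma(2d-4,1)$ on the blow-up of $\PP^d$ along $\Gamma$. The second step is to argue that the generic fibre of $\tilde h_{d,1}$ over its image has length at most the length of the fibre over our distinguished point. For this I would use upper semicontinuity of fibre dimension/length for a projective morphism together with the following: the source $\widetilde\Sigma(d,1)$ is irreducible of dimension $d$, the image $H_{d,1}$ is irreducible, and $\tilde h_{d,1}$ is generically finite by Proposition~\ref{thm:fin}; hence there is a dense open $U \subseteq H_{d,1}$ over which $\tilde h_{d,1}$ is finite flat, and the degree of $h_{d,1}$ (as a dominant generically finite map) equals the length of \emph{every} fibre over $U$. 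If the distinguished point lies in $U$, we are done immediately, since that fibre has length $1$. The main obstacle is precisely this: the distinguished point $h_{d,1}(x_0^2x_1^{d-2})$ need not a priori lie in the flat locus $U$, and in general fibre length can only \emph{jump up} on the special locus, which is the wrong direction.

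To get around this I would argue as follows. Let $\delta = \deg h_{d,1}$ be the degree of the generically finite dominant map $h_{d,1}\colon \widetilde\Sigma(d,1) \dashrightarrow H_{d,1}$. Over the open set $U$ where $\tilde h_{d,1}$ is finite, the scheme-theoretic fibre has length exactly $\delta$. Now I claim the fibre over the distinguished point $q := h_{d,1}(x_0^2x_1^{d-2})$ has length $\geq \delta$. This follows from the principle that for a projective morphism $\pi\colon X \to Y$ with $X$ Cohen--Macaulay (here $X = \widetilde\Sigma(d,1)$ is smooth), $Y$ regular, and $\pi$ generically finite, the fibre length is lower semicontinuous is \emph{false} in general — so instead I would use the cleaner route: compute the degree $\delta$ directly. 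The map $h_{d,1}$ is defined by the linear system $\H_{d,1}$ of quadrics through $\Gamma$, and its degree onto the image can be read off as the degree of $H_{d,1}$ times $\delta$ equalling the appropriate top self-intersection of the pullback of a hyperplane on $\widetilde\Sigma(d,1)$, i.e.\ $(2H - E)^d$ on the blow-up, where $H$ is the hyperplane class of $\PP^d$ and $E$ the exceptional divisor. Rather than evaluate this, the efficient argument is: since the proper transform of the general line in $\PP^d$ maps isomorphically (lines meet $\Gamma$ in $\leq 1$ point, generically $0$ points, and a $2$-dimensional linear system of quadrics separates points on a line that meets $\Gamma$ transversally), a general point of $H_{d,1}$ has at least ... — this still requires care.

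The honest main obstacle, then, is bridging from "one fibre is a reduced point" to "the general fibre is a reduced point." The cleanest fix, which I expect the authors to take, is: by upper semicontinuity of fibre length applied to the projective morphism $\tilde h_{d,1}$, the locus $\{y \in H_{d,1} : \ell(\tilde h_{d,1}^{-1}(y)) \leq 1\}$ is \emph{open} in $H_{d,1}$; Proposition~\ref{prop:fiber} shows this open set is nonempty (it contains $q$); since $H_{d,1}$ is irreducible, this open set is dense; and on it the fibre is a single reduced point, so $\tilde h_{d,1}$ — hence $h_{d,1}$ — is birational onto $H_{d,1}$. (The point $x_0^2x_1^{d-2}$ lies on the proper transform in $\widetilde\Sigma(d,1)$, not on $E$, since it is not a $d$-th power, so it is an honest point of the source where $\tilde h_{d,1}$ is a genuine morphism and the fibre computation of Proposition~\ref{prop:fiber} is the scheme-theoretic fibre of $\tilde h_{d,1}$.) I would write the proof in this form, citing upper semicontinuity of the dimension and length of fibres of a projective morphism, and invoking Proposition~\ref{prop:fiber} for nonemptiness and Proposition~\ref{thm:fin} together with irreducibility of $\widetilde\Sigma(d,1)$ for genericity; the case $d = 8$ is handled identically using the point $x_0^3x_1^5$.
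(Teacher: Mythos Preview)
There is a genuine gap. Your argument hinges on the claim that the scheme-theoretic fibre of the \emph{morphism} $\tilde h_{d,1}$ over $q := h_{d,1}(x_0^2x_1^{d-2}) = x_0^2x_1^{2d-6}$ coincides with the fibre computed in Proposition~\ref{prop:fiber}. But Proposition~\ref{prop:fiber} only computes the fibre of the rational map $h_{d,1}$ on the open set $\PP^d\setminus\Gamma$; it says nothing about points of the exceptional divisor $E$ lying over $q$. And there \emph{is} such a point: by Proposition~\ref{prop:exc}, the fibre $E_p$ of $E$ over the point $p\in\Gamma$ corresponding to $x_1^d$ maps onto the linear space of divisors of the form $x_1^{d-2}g(x_0,x_1)$, and $q = x_1^{d-2}\cdot x_0^2x_1^{d-4}$ lies in this space. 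So $\tilde h_{d,1}^{-1}(q)$ contains at least two points, namely $w=x_0^2x_1^{d-2}$ on the open part and a point $v\in E_p$. Upper semicontinuity then gives you only $\deg h_{d,1}\leq 2$, not $\leq 1$.

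This is exactly where the paper's proof diverges from yours. The authors first establish (as above) that the fibre of $\tilde h_{d,1}$ over $q$ consists schematically of the two points $v$ and $w$, whence the degree is $1$ or $2$. They then rule out degree $2$ by a separate degeneration argument: if the degree were $2$, the generic $f$ would pair with a unique $f'$ having the same Hessian, and the line $\langle f,f'\rangle$ would map $2:1$ to a line in the target; specializing $f\to w$ forces $f'\to x_1^d$ (the blow-down of $v$), but one checks directly that $h_{d,1}$ restricted to the pencil $\langle x_1^d, x_0^2x_1^{d-2}\rangle$ is bijective rather than $2:1$, a contradiction. Your proposal is missing this second step entirely.
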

\begin{proof}
We first show that the degree of $h_{d,1}$ is one or two.

 { We recall the resolution of indeterminacies
$\tilde h_{d,1}: \widetilde \Sigma(d,1)\to  \Sigma(2d-4,1)$ described before Proposition \ref{prop:exc},
where $E=\PP(N_{\Gamma|\PP^d}^\vee) \subset\widetilde \Sigma(d,1)$ is the exceptional divisor of the blow--up of $\Sigma (d,1)$ along $\Gamma$. 
Rephrasing Proposition \ref{prop:exc}, the fiber $E_p$ of $E$ over the point $p\in \Gamma$ corresponding to $x_1^d$, maps isomorphically via $\tilde h_{d,1}$ to the linear space consisting of all divisors with equation $x_1^{d-2}g(x_0, x_1)=0$
where $g$ is any binary form of degree $d-2$. 
As we saw, if $f$ is a general binary form of degree $d$, the map  $\tilde h_{d,1}$ takes the intersection with $E$ of the proper transform of the pencil $\langle x_1^d, f\rangle$ on  $ \widetilde \Sigma(d,1)$ to $\lim_{t\to 0}h_{d,1}(x_1^{d}+tf)=x_1^{d-2}f_{00}$. Note that $f_{00}\equiv 0$ if and only if $f$ belongs to the pencil  $\langle x_1^d, x_1^{d-1}x_0\rangle$, which can be identified with the tangent line $T_{\Gamma, p}$. Indeed, the fiber of ${N_{\Gamma|\PP^d}}$ at $p$ is the quotient of the space of lines through $p$
modulo $T_{\Gamma, p}$.
It follows that, scheme theoretically, there is a unique  point  $v\in E_{p}$ which maps via $\tilde h_{d,1}$ to 
$h_{d,1}(x_0^2x_1^{d-2})=x_0^2x_1^{2d-6}$, and one can easily check that $v$
is the intersection of $E$ with the proper transform on $\widetilde \Sigma (d,1)$ of the pencil $\langle x_1^{d},x_0^{4}x_1^{d-4}\rangle$, i.e., $x_0^2x_1^{2d-6}=
\lim_{t\to 0}h_{d,1}(x_1^{d}+tx_0^{4}x_1^{d-4})=\tilde h_{d,1}(v)$.
It follows that the fiber $\tilde h_{d,1}^{-1}(\tilde h_{d,1}(w))$, with $w=x_0^2x_1^{d-2}$, consists schematically
of two points, i.e., the points $v$ and $w$.
This implies that, as claimed, the degree of $h_{d,1}$ is one or two. 

Next we prove that $h_{d,1}$ is birational onto its image. Assume by contradiction that its degree is two.
Hence for a generic binary form $f$ of degree $d$ there is a unique binary form $f'$ such that $h_{d,1}(f)=h_{d,1}(f')$.
 Consider the (quadratic)  map $h_{d,1}$ restricted to the pencil $\langle f, f'\rangle$.
Its image is not a conic, because the two points $f$ and $f'$ are glued by $h_{d,1}$, so  $h_{d,1}$ maps the line $\langle f, f'\rangle$ to a line with degree 2.
Now we degenerate $f$ to $x_0^2x_1^{d-2}$. Correspondingly the form $f'$ degenerates to $x_1^d$ and
the pencil $\langle f, f'\rangle$ degenerates to the pencil $\langle x_1^d, x_0^2x_1^{d-2}\rangle$.
This is a contradiction because, as it is easily seen, $h_{d,1}$ is bijective and not of degree 2 when restricted to this pencil.}
\end{proof}
}

\begin{corollary}\label{cor:deg} The Hessian variety $H_{d,1}$ for $d\geq 5$, is rational of  degree $2^d-d(d-1)-2$ and dimension $d$ in $\PP^{2d-4}$.
For $d=5$ it is a hypersurface of degree $10$ in $\PP^6$.
\end{corollary}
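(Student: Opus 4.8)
Rationality and dimension are immediate from Theorem~\ref{thm:birat}: being birational onto $H_{d,1}$, the map $h_{d,1}$ identifies $H_{d,1}$ birationally with $\Sigma(d,1)\cong\PP^d$, so $H_{d,1}$ is rational of dimension $d$, and by construction it sits in $\Sigma(2d-4,1)\cong\PP^{2d-4}$. The content of the statement is therefore the degree.

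To compute it, I would use the resolution of indeterminacies $\tilde h_{d,1}\colon\widetilde\Sigma(d,1)\to\Sigma(2d-4,1)$ introduced before Proposition~\ref{prop:exc}, where $\widetilde\Sigma(d,1)$ is the blow-up of $\PP^d$ along $\Gamma$ with exceptional divisor $E$. By Proposition~\ref{prop:GN} the scheme-theoretic base locus of $\H_{d,1}$ is the \emph{reduced} curve $\Gamma$; hence $\tilde h_{d,1}$ is a morphism and, writing $H$ for the pullback to $\widetilde\Sigma(d,1)$ of the hyperplane class of $\PP^d$, one has $\tilde h_{d,1}^{*}\O(1)=2H-E$. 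Since $h_{d,1}$, and therefore $\tilde h_{d,1}$, is birational onto its image by Theorem~\ref{thm:birat}, the projection formula gives
\[
\deg H_{d,1}=(2H-E)^{d},
\]
an intersection number on the smooth variety $\widetilde\Sigma(d,1)$.

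The remaining step is to evaluate $(2H-E)^{d}$. I would expand
\[
(2H-E)^{d}=\sum_{k=0}^{d}\binom{d}{k}2^{d-k}(-1)^{k}H^{d-k}E^{k},
\]
and observe that, since $H|_{E}$ is pulled back from the curve $\Gamma$, one has $(H|_{E})^{2}=0$, so $H^{a}E^{b}=0$ whenever $a\geq2$ and $b\geq1$; thus only the terms $k=0,\,d-1,\,d$ survive. Invoking $N_{\Gamma|\PP^d}\cong\O_{\PP^1}(d+2)^{\oplus d-1}$, hence $E\cong\PP^1\times\PP^{d-2}$, together with the self-intersection formula $H^{d-k}E^{k}=\int_{E}(H|_{E})^{d-k}(E|_{E})^{k-1}$ and the relations $H|_{E}=d\alpha$, $E|_{E}=(d+2)\alpha-\beta$ (where $\alpha,\beta$ are the pullbacks of the point class of $\PP^{1}$ and the hyperplane class of $\PP^{d-2}$, so $\alpha^2=0$ and $\int_E\alpha\beta^{d-2}=1$), one finds $H^{d}=1$, $HE^{d-1}=(-1)^{d}d$ and $E^{d}=(-1)^{d}(d-1)(d+2)$. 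Substituting,
\[
(2H-E)^{d}=2^{d}-2d^{2}+(d-1)(d+2)=2^{d}-d(d-1)-2.
\]
For $d=5$ this is $32-20-2=10$, and since $H_{5,1}$ is an irreducible variety of dimension $5$ in $\PP^{6}$ it is a hypersurface, necessarily of degree $10$.

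The argument is essentially bookkeeping; the two points to watch are (i) that $\tilde h_{d,1}^{*}\O(1)$ is exactly $2H-E$, i.e.\ that $\H_{d,1}$ becomes base-point free after blowing up $\Gamma$ --- this is precisely Proposition~\ref{prop:GN} --- and (ii) the sign conventions in the intersection computation on $E$, in particular the identification of $E|_{E}$, which is where the twist $\O_{\PP^1}(d+2)$ of the normal bundle enters. A quick sanity check is $d=3$, where the same computation gives $(2H-E)^{3}=0$, consistent with the general fibres of $h_{3,1}$ being the chords of $\Gamma$ by Proposition~\ref{thm:fin}.
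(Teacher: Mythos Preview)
Your argument is correct and is essentially the same intersection-theoretic computation as the paper's, carried out one level more explicitly. The paper intersects $d$ general quadrics of $\H_{d,1}$ in $\PP^{d}$, obtains $2^{d}$ by B\'ezout, and subtracts the excess contribution of the rational normal curve $\Gamma$, which it reads off as $d(d-1)+2$ from \cite[Example~9.1.1]{Fulton}. You instead pass to the blow-up and compute $(2H-E)^{d}$ directly from the normal bundle $N_{\Gamma|\PP^{d}}\cong\O_{\PP^{1}}(d+2)^{\oplus d-1}$; this is exactly the computation underlying Fulton's formula, so the two routes coincide. Your version has the virtue of being self-contained and of making transparent where Proposition~\ref{prop:GN} enters (to guarantee $\tilde h_{d,1}^{*}\O(1)=2H-E$ with no residual base scheme), while the paper's is shorter by outsourcing the intersection calculus.
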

\begin{proof} Rationality and dimension follow from Theorem \ref {thm:birat}. 
The degree is computed cutting $H_{d,1}$ with $d$ general hyperplanes $\PP^{2d-4}$.
They correspond to $d$ general quadrics in $\PP^d$ cutting schematically the rational normal curve of degree $d$ plus finitely many points, whose number is  the degree of $H_{d,1}$.
The equivalence of the rational normal curve in the intersection of $d$ quadrics is $d(d-1)+2$ by \cite[Example 9.1.1]{Fulton}. This concludes the proof.
\end{proof}

Another interesting information is given by the following:

\begin{proposition}\label{prop:smooth} If $F\in \Sigma(d,1)$ is general, then  $\Hess(F)\in \Sigma(2d-4,1)$ is a reduced divisor.
\end{proposition}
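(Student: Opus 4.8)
\emph{Approach.} Recall that $\Hess(F)\in\Sigma(2d-4,1)$ is reduced precisely when the binary form $\hess(f)$ is squarefree (has $2d-4$ distinct roots), so the statement says that for general $f$ the form $\hess(f)$ has no repeated factor. The plan is to bound the dimension of the locus where this fails by an incidence‑variety argument. Set
\[
I=\bigl\{(F,P)\in (\Sigma(d,1)\setminus\Gamma)\times\PP^1 \;:\; P \text{ is a multiple root of } \hess(f)\bigr\}.
\]
A form $F$ fails to have reduced Hessian exactly when $F\in\Gamma$ (where $\hess(f)\equiv0$, recalling Remark~\ref{rem:con}) or $F\in\pr_1(I)$; since $\dim\Gamma=1<d$, it suffices to prove that $\pr_1\colon I\to\Sigma(d,1)$ is not dominant, and for this I will show $\dim I\le d-1$.

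\emph{Fibres over $\PP^1$.} I would first analyse the second projection $\pr_2\colon I\to\PP^1$. Since the Hessian is an $\SL(2,\CC)$–covariant, one has $\mult_P(\hess(f))=\mult_{g\cdot P}(\hess(g\cdot f))$ for every $g\in\SL(2,\CC)$, and as $\SL(2,\CC)$ acts transitively on $\PP^1$ all fibres of $\pr_2$ are isomorphic; hence it is enough to compute the fibre over $P=[1,0]$. Here I use the explicit expression for $\hess(f)$ recorded inside the proof of Proposition~\ref{thm:fin}: up to a nonzero constant one has $\hess(f)=\sum_{p=0}^{2d-4}Q_p\,x_0^{2d-4-p}x_1^{p}$. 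The point $[1,0]$ is a multiple root of $\hess(f)$ if and only if $x_1^2$ divides $\hess(f)$, i.e. $Q_0=Q_1=0$, and reading off that formula gives
\[
Q_0=a_0a_2-a_1^2,\qquad Q_1=(d-2)\,(a_0a_3-a_1a_2).
\]
These two quadrics are irreducible and, for $d\ge3$, have no common factor (one involves $a_3$, the other does not), so $\{Q_0=Q_1=0\}$ has pure codimension $2$ in $\PP^d=\Sigma(d,1)$; moreover it is not contained in $\Gamma$, since the point with $a_2=a_3=1$ and all other $a_i=0$ lies on it but not on $\Gamma$. Thus the fibre of $\pr_2$ over each $P$ has dimension $d-2$, whence $\dim I=(d-2)+1=d-1$.

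\emph{Conclusion.} Since $\dim I=d-1$, the image $\pr_1(I)$ lies in a closed subset of dimension at most $d-1$, so the locus of $F$ with non‑reduced Hessian is contained in the proper closed subset $\Gamma\cup\overline{\pr_1(I)}\subsetneq\Sigma(d,1)$. Its complement is a dense open set on which $\Hess(F)$ is a reduced divisor, which is the assertion.

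\emph{Main obstacle.} The only delicate point is the codimension statement for the fibre, namely that imposing ``$P$ is a multiple root of $\hess(f)$'' for a \emph{fixed} $P$ really cuts down two dimensions and not one — this is exactly where one must use the concrete shape of $Q_0$ and $Q_1$ rather than a purely formal count. Everything else is dimension bookkeeping, and the argument is uniform in $d\ge3$.
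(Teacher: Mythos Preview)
Your argument is correct and takes a genuinely different route from the paper's. The paper argues via the \emph{differential} of $h_{d,1}$: assuming by contradiction that the general $\Hess(F)$ is non-reduced, the image of $h_{d,1}$ lies in the discriminant hypersurface of $\Sigma(2d-4,1)$, so for every tangent direction $g$ the first-order variation $\hess(f,g):=f_{00}g_{11}+g_{00}f_{11}-2f_{01}g_{01}$ (the ``simultaneous Hessian'') must share a root with $\hess(f)$. Specialising to $g=x_0^d$, and arranging via the $\SL(2,\CC)$-action that $x_0=0$ is not a root of $\hess(f)$, this forces a common zero of $f_{01}$ and $f_{11}$, i.e.\ a double root of $f_1$, contradicting the genericity of $f$. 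Your incidence-correspondence count is more elementary and entirely self-contained: it reduces everything to checking that the two explicit quadrics $Q_0=a_0a_2-a_1^2$ and $Q_1=(d-2)(a_0a_3-a_1a_2)$ have no common factor, which is immediate. What the paper's approach buys is a link with the differential of $h_{d,1}$ that it exploits elsewhere in \S\ref{sec:d,1}; what yours buys is that it sidesteps the somewhat delicate (and, in the paper, tersely justified) step of explaining why $\hess(f,g)$ must vanish at a multiple root of $\hess(f)$ for \emph{every} $g$.
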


\begin{proof}  Let $f(x_0,x_1)=0$ be the equation of $F$. Assume that $\Hess(F)$ is not reduced. This means that $\hess(f)$ has some multiple root. We will prove this is not the case.
To start with, we may assume that $\hess(f)$ does not have the root $x_0=0$: indeed the Hessian is a projective covariant, hence, by acting with projective transformations we may  assume $\hess(f)$ does not vanish on any given point of $\PP^1$.

Next we compute again the differential of the Hessian map in a slightly different way than before. Let $g$ be any { homogeneous polynomial} of degree $d$ in the variables $x_0,x_1$. Consider the polynomial $f+\varepsilon g$, with $\epsilon^2=0$, which can be interpreted as a tangent vector  to $\Sigma(d,1)$ at $F$. Then
\[
\hess(f+\varepsilon g)=\hess(f)+ \varepsilon \hess(f,g)
\]  
where
\[
 \hess(f,g):= \det \left(\begin{matrix} 
g_{00} & g_{01} \\
f_{10} &  f_{11}\\
\end{matrix}\right) +
\det \left(\begin{matrix} 
 f_{00} &  f_{01} \\
 g_{10} & g_{11}\\
\end{matrix}\right)
\]
will be called the \emph{simultaneous Hessian} of $f$ and $g$, and can be interpreted as the differential of $h_{d,1}$ at $F$. If $F$ is non--reduced, then $\hess(f)$ and $\hess(f,g)$ must have a common root, whatever $g$ is. We prove this is not the case. Indeed, we can take $g=x_0^d$. Then
\[
\hess(f,g)=d(d-1)x_0^{d-2}f_{11}
\] 
If this has a common root with $\hess(f)$, since, as we assumed, $\hess(f)$ does not have the root $x_0=0$, then there is a common root of $\hess(f)$ and of $f_{11}$. 
But then this is also a root of the system $f_{01}=f_{11}=0$
which means that the polynomial $f_1$ has some double root. Since $f$ is general, this is a contradiction. \end{proof}

Finally we are interested in identifying  the linear system of quadrics $\H_{d,1}$. Since $\PP^1=\PP(V^\vee)$, then  $V$ is the vector space of linear forms in $x_0,x_1$, on which we have the obvious $\SL(2,\CC)$ action. The linear system $\H_{d,1}$, of dimension $2d-4$, corresponds to a subvector space of dimension at most $2d-3$ of $\Sym^2(\Sym^d(V))$ which is invariant by the $\SL(2,\CC)$ action. It is well known that the representation $\Sym^2(\Sym^d(V))$ of $\SL(2,\CC)$ splits as a sum of irreducible invariant subspaces as follows
\begin{equation*}\label{eq:split}
\Sym^2(\Sym^d(V))=\bigoplus_{i\geq 0}\Sym^{2d-4i}(V)
\end{equation*}
(see \cite [Ex. 11.31]{FuHar}). We can interpret $\PP(\Sym^2(\Sym^d(V)))$ as the linear system of all quadrics in the projective space $\PP(\Sym^d(V))\cong \Sigma(d,1)\cong \PP^d$. In this projective space we have the cone locus $\Gamma=\C_{d,1}$. Then 
\[
S_1:=\bigoplus_{i\geq 1}\Sym^{2d-4i}(V)
\]
identifies as the vector space of quadrics vanishing on $\Gamma$ and 
\[
S_2:=\bigoplus_{i\geq 2}\Sym^{2d-4i}(V)
\]
is the vector space of quadrics vanishing on $\Tan(\Gamma)$ (see  \cite [Ex. 11.32]{FuHar}). Then we have
\begin{equation}\label{eq:tan}
\dim (S_1/S_2)=\dim (\Sym^{2d-4}(V))=2d-3 
\end{equation}
and more precisely we have an $\SL(2,\CC)$ invariant subspace $W$ of quadrics of dimension $2d-3$ such that $S_1=S_2\oplus W$. The linear systems associated to $W$ is exactly $\H_{d,1}$.  

\begin{remark}\label{rem:tan1} We can give a geometric interpretation of \eqref {eq:tan}.

Let $\Sigma\to \Tan(\Gamma)$ be the minimal desingularization of $\Tan (\Gamma)$, the tangential surface of $\Gamma$. Then $\Sigma$ is a scroll over $\PP^1$. By abuse of notation we still denote by $\Gamma$ its strict transform on $\Sigma$, we denote by $R$ a ruling of $\Sigma$ and by $H$ the pull--back of a hyperplane section of $\Tan(\Gamma)$. By Riemann--Hurwitz formula, we have $\deg(\Tan(\Gamma))=2d-2$. On the other hand $\Gamma$ is a unisecant of the ruling on $\Sigma$, hence $H\sim \Gamma+(d-2)R$. The quadrics through $\Gamma$ cut out on $\Tan(\Gamma)$ divisors which are in the linear system
\[
|2H-2\Gamma|=|2(d-2){ R}|
\] 
because $\Gamma$ is singular for $\Tan(\Gamma)$. So we have a linear map
\[
r: S_1\to H^0(\Sigma, \O_\Sigma(2(d-2){ R}))\cong \CC^{2d-3}
\]
whose kernel is $S_2$. By Corollary \ref {cor:tan} the map $r$ is surjective, which explains 
 \eqref {eq:tan}.  \end{remark}

\section{The case $r=1, d=3$}\label {sec:1,3}

This is an easy case. We have $h_{3,1}:  \Sigma(3,1)\cong \PP^3\to \Sigma(2,1)\cong \PP^2$, and the map is surjective. The chords of the cone locus $\C_{3,1}=\Gamma$ are contracted to points. The tangents to $\Gamma$ are contracted to the points of the \emph{diagonal} conic $\Delta$ of $ \Sigma(2,1)$, with $\Delta=\{2x: x\in \PP^1\}$. The exceptional divisor $E$ of the blow--up of $\PP^3$ along $\Gamma$ is a scroll over $\Gamma$, whose rulings go to the tangents lines to $\Delta$. The 2--dimensional linear system $\H_{3,1}$ is the complete linear system of quadrics through $\Gamma$.

\section{The case $r=1, d=4$}\label {sec:1,4}

In this section we examine in details the behaviour of the map $h_{4,1}: 
\Sigma(4,1)\cong \PP^4\to \Sigma(4,1)\cong \PP^4$. 

Recall that $V$ is the vector space of linear forms in $x_0,x_1$, on which we have the natural $\SL(2,\CC)$ action. Recall also formula \eqref {eq:split} from \S \ref {sec:d,1}. In the present case we have
\[
\Sym^2(\Sym^4(V))=\Sym^{8}(V)\oplus \Sym^{4}(V)\oplus \Sym^{0}(V)
\]
where $\Sym^{0}(V)=\CC$ is the space we called $S_1$ in \S \ref {sec:d,1}, which is generated by the (unique up to a constant) non--zero quadratic polynomial vanishing on $\Tan(\Gamma)$, where $\Gamma$ is the cone locus $\C_{4,1}$, a rational normal quartic curve. Again in the notation  of \S \ref {sec:d,1}, we have $S_1= \Sym^{4}(V)\oplus \Sym^{0}(V)\cong \CC^6$, which identifies with the vector space $H^0(\Ii_{\Gamma|\PP^4}(2))$ of quadratic polynomials vanishing on $\Gamma$. The 4--dimensional linear system of quadrics $\H_{4,1}$ defining $h_{4,1}$ is associated to the 5--dimensional $\SL(2,\CC)$--invariant vector space $W\cong \Sym^{4}(V)$, which we will soon identify. 

To be very explicit, let 
\begin{equation}\label{eq:f}
f(x_0,x_1)=a_0x_0^4+4a_1x_0^3x_1+6a_2x_0^2x_1^2+4a_3x_0x_1^3+a_4x_1^4
\end{equation}
be a general homogeneous polynomial of degree 4 in $(x_0,x_1)$, which gives us a class $[f]\in \Sigma(4,1)\cong \PP^4$, and we may attribute to $[f]$ the homogeneous coordinates $[a_0,\ldots, a_4]$.

The cone locus $\Gamma=\C_{4,1}$ is described by the classes of the non--zero polynomials of the form
\[
(\alpha_0x_0+\alpha_1x_1)^4
\]
with the corresponding homogeneous coordinates
\[
[\alpha_0^4,\alpha_0^3\alpha_1,\alpha_0^2\alpha_1^2, \alpha_0\alpha_1^3, \alpha_1^4]
\]
so that the ideal of the polynomials vanishing on $\Gamma$ is generated by the minors of  order 2 of the matrix 
\begin{equation*}\label{eq:bu}
A= \left(\begin{matrix} 
a_0&a_1&a_2&a_3 \\
a_1&a_2&a_3&a_4 \\
\end{matrix}\right).
\end{equation*}
We denote by $Q_{ij}$ the minor of $A$ determined by the columns of order $i$ and $j$, with $1\leq i<j\leq 4$. Let also $[x_{ij}]_{1\leq i<j\leq 4}$ be the (lexicographically ordered) homogeneous coordinates in $\PP^5$. The 5--dimensional linear system $|H^0(\Ii_{\Gamma|\PP^4}(2))|$  determines a rational map $\mu: \PP^4\dasharrow \PP^5$, which can be written as
\[
x_{ij}=Q_{ij}\quad \text{for}\quad 1\leq i<j\leq 4.
\]
It is well know that the image of $\mu$ is a smooth quadric in $\PP^5$ (see \cite [Chapt. X, \S 3.2]{SemRo}), precisely the quadric $Q$ with equation
\begin{equation}\label{eq:q}
x_{12}x_{34}-x_{13}x_{24}+x_{14}x_{23}=0.
\end{equation}

Let us now write down the Hessian map $h_{4,1}$. The Hessian of a polynomial as in \eqref {eq:f} is
\[
\begin{split}
{ \hess(f)}&= (a_0a_2-a_1^2)x_0^4+2(a_0a_3-a_1a_2)x_0^3x_1+(a_0a_4+2a_1a_3-3a_2^2)x_0^2x_1^2+\\
&+2(a_1a_4-a_2a_3)x_0x_1^3+(a_2a_4-a_3^2)x_1^4.
\end{split}
\]
Note that
\[
\begin{split}
& a_0a_2-a_1^2=Q_{12},\,\,\, a_0a_3-a_1a_2=Q_{13},\\
& a_0a_4+2a_1a_3-3a_2^2=Q_{14}+3Q_{23},\\
&a_1a_4-a_2a_3=Q_{24}, \,\,\,a_2a_4-a_3^2=Q_{34}
\end{split}
\]
so that
\[
{ \hess(f)}=Q_{12}x_0^4+2Q_{13}x_0^3x_1+(Q_{14}+3Q_{23})x_0^2x_1^2+2Q_{24}x_0x_1^3+Q_{34}x_1^4.
\]
Hence the Hessian map is so defined
\[
h_{4,1}: [a_0,\ldots, a_4]\in \PP^4\to [Q_{12},\frac 12 Q_{13},\frac 16 (Q_{14}+3Q_{23}),\frac 12 Q_{24},Q_{34}]\in \PP^4.
\]
Consider the polynomial
\begin{equation}\label{eq:bu2}
j=\det \left(\begin{matrix} 
a_0&a_1&a_2 \\
a_1&a_2&a_3 \\
a_2&a_3&a_4
\end{matrix}\right).
\end{equation}
One has
\[
\frac {\partial j}{\partial a_0}=Q_{34}, \frac {\partial j}{\partial a_1}={ -}2Q_{24},  \frac {\partial j}{\partial a_2}=Q_{14}+3Q_{23}, \frac {\partial j}{\partial a_3}={ -}2Q_{13}, \frac {\partial j}{\partial a_4}=Q_{12}
\]
hence the Hessian map $h_{4,1}$ is nothing but the \emph{polar map} of the polynomial $j$ and $\H_{4,1}$ is the linear system of polar quadrics of the hypersurface $j=0$. Moreover, since all derivatives of $j$ vanish on $\Gamma$, then $\Gamma$ is in the singular locus of the hypersurface $j=0$. This implies that this hypersurface, of degree 3,  coincides with $\Sec(\Gamma)$. 

Consider now the projective transformation $\omega$ of $\PP^5$ with equations
\[
\begin{split}
&y_{12}=x_{34},\quad y_{34}=x_{12},\quad y_{13}={ -}2x_{24},\quad y_{24}={ -}2x_{13},\\
&\qquad y_{14}=x_{14}+3x_{23}, \quad y_{23}=x_{14}-3x_{23}.
\end{split}
\]
The Hessian map $h_{4,1}$ is nothing but the composition of the map $\mu: \PP^4\dasharrow \PP^5$ with $\omega$ with the projection from the point $p=[0,0,0,1,0,0]$ on the hyperplane $y_{23}=0$. Notice that the image {of  the quadric $Q$ with equation \eqref {eq:q} via $\omega\circ \mu$ is the quadric $Q'$ with equation}
\[
12y_{12}y_{34}-3y_{13}y_{24}+y_{14}^2-y_{23}^2=0
\]
{ which} does not pass through the point $p$. Hence we conclude with the:

\begin{thm}\label{thm:deg2} The Hessian map $h_{4,1}$ has degree 2.
\end{thm}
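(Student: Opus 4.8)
The plan is to exploit the explicit factorization of $h_{4,1}$ already assembled in the preceding discussion: $h_{4,1} = \pi_p \circ \omega \circ \mu$, where $\mu\colon \PP^4 \dashrightarrow \PP^5$ is the map given by the $2\times 2$ minors $Q_{ij}$ of $A$, $\omega$ is the given projective transformation of $\PP^5$, and $\pi_p$ is the projection from $p = [0,0,0,1,0,0]$ onto the hyperplane $y_{23}=0$. Since $\mu$ maps $\PP^4$ birationally onto the smooth quadric $Q \subset \PP^5$ of equation \eqref{eq:q} (the map $\mu$ being an isomorphism onto $Q$ away from $\Gamma$, classically), and $\omega$ is an isomorphism of $\PP^5$ carrying $Q$ onto the smooth quadric $Q'$ of equation $12y_{12}y_{34} - 3y_{13}y_{24} + y_{14}^2 - y_{23}^2 = 0$, the degree of $h_{4,1}$ equals the degree of the projection $\pi_p$ restricted to $Q'$.

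First I would record that $p = [0,0,0,1,0,0] \notin Q'$, which is immediate since substituting the coordinates of $p$ into the equation of $Q'$ gives $-3 \neq 0$; this is exactly the observation made just before the theorem statement. Then the restriction of $\pi_p$ to $Q'$ is a morphism (no indeterminacy, as $p \notin Q'$), and a general line through $p$ meets the quadric $Q'$ in exactly $2$ points, since $Q'$ has degree $2$ and the line is not contained in it. Hence $\pi_p|_{Q'}\colon Q' \to \PP^4$ is a finite morphism of degree $2$ onto a hypersurface — in fact onto $\PP^4$ itself since $\dim Q' = 4$ — and this is generically $2$-to-$1$.

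The remaining point is to transfer this degree computation through $\omega\circ\mu$: because $\omega$ is a projective isomorphism and $\mu$ is birational onto its image $Q$, the composition $\omega\circ\mu\colon \PP^4 \dashrightarrow Q'$ is birational. Therefore $\deg(h_{4,1}) = \deg(\pi_p|_{Q'}) = 2$. I would phrase this as: for a general point $q \in \PP^4$, the fiber $h_{4,1}^{-1}(q)$ is in bijection with $(\pi_p|_{Q'})^{-1}(q)$ via the birational map $\omega\circ\mu$, and the latter has cardinality $2$.

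The only genuine subtlety — and the main thing to check carefully rather than the bulk of the argument — is the claim that $\mu$ is \emph{birational} onto $Q$ (not merely dominant of some higher degree onto $Q$). This is the classical statement that the linear system of quadrics through a rational normal quartic curve in $\PP^4$ embeds $\operatorname{Bl}_\Gamma \PP^4$ into $\PP^5$ with image the smooth quadric $Q$; it is cited from \cite[Chapt. X, \S 3.2]{SemRo}, and one may alternatively see it by noting that $\mu$ separates general points of $\PP^4$ because the quadrics $Q_{ij}$ already separate them (two general binary quartics with the same $2\times2$ minors of $A$ have proportional coordinate rows, forcing equality up to scalar). Once birationality of $\mu$ is in hand, the rest is the elementary fact that projection of a smooth quadric fourfold from an exterior point has degree $2$.
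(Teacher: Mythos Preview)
Your proposal is correct and follows exactly the same route as the paper: the paper's argument is precisely the discussion preceding the theorem, culminating in the observation that $h_{4,1}$ factors as $\pi_p\circ\omega\circ\mu$ with $\mu$ birational onto $Q$, $\omega(Q)=Q'$, and $p\notin Q'$, so that projection of the smooth quadric $Q'$ from the exterior point $p$ has degree $2$. One trivial slip: substituting $p=[0,0,0,1,0,0]$ (i.e., $y_{23}=1$) into the equation of $Q'$ yields $-1$, not $-3$; this does not affect the argument.
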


A form
\[
g(x_0,x_1)=b_0x_0^4+4b_1x_0^3x_1+6b_2x_0^2x_1^2+4b_3x_0x_1^3+b_4x_1^4
\]
sits in the branch locus of $h_{4,1}$ if and only if 
\[
b_0b_4-4b_1b_3+3b_2^2=0.
\]
This is { the   $\SL(2, \CC)$--invariant quadric, that we call $\mathfrak Q$}. Hence it must vanish on $\Tan(\Gamma)$, and in fact this is the case, as one  checks by a direct computation.

The ramification locus of $h_{4,1}$ is contained in the Hessian hypersurface of $j$. This is defined by a polynomial of degree 5, which does not vanish identically. The hypersurface $\Sec(\Gamma)$, with equation $j = 0$ has, by Terracini's Lemma, \emph{parabolic points}, namely, it is described by a family of lines along which the tangent hyperplane is fixed. This implies that $j$ divides $\hess(j)$ (see \cite{Cil}). The quotient is a polynomial of degree 2, which defines a quadric coinciding with the ramification locus of $h_{4,1}$. This quadric is $\SL(2, \CC)$--invariant, hence it must be the unique quadric $\mathfrak Q$ containing $\Tan(\Gamma)$. This can be checked with a direct computation which can be left to the reader. In conclusion:

\begin{proposition}\label{prop:hhh} The quadric $\mathfrak Q$ containing $\Tan(\Gamma)$ is such that:\\
\begin{inparaenum}
\item [$\bullet$] its general point is the Hessian of a unique polynomial in $\Sigma(4, 1)$;\\
\item [$\bullet$] its general point is uniquely determined by its Hessian, which also lies in $\mathfrak Q$.
\end{inparaenum}

In particular the { Hessian} map $h_{4,1}$ induces a birational transformation of the quadric $\mathfrak Q$ into itself.
\end{proposition}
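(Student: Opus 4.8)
The plan is to read the statement off the factorization $h_{4,1}=\pi_p\circ\omega\circ\mu$ obtained just above, combined with the classical description of the projection of a smooth quadric fourfold from a point lying outside it. Recall from that discussion that $\mu\colon\PP^4\dashrightarrow Q\subset\PP^5$ is birational onto the smooth quadric fourfold $Q$, that $\omega$ is a projective automorphism of $\PP^5$ with $Q':=\omega(Q)$ a smooth quadric fourfold not passing through the centre of projection $p$, and that $\pi_p\colon\PP^5\dashrightarrow\PP^4$ is the linear projection from $p$. Since $p\notin Q'$ the restriction $\pi:=\pi_p|_{Q'}\colon Q'\to\PP^4$ is a finite morphism of degree $2$, i.e. a flat double cover of $\PP^4$; thus, up to the birational map $\omega\circ\mu$, the Hessian map $h_{4,1}$ is exactly this double cover.

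First I would recall the standard description of the ramification and branch divisors of $\pi$. Let $F$ be a quadratic equation of $Q'$, with polar bilinear form $B$, and let $H_p=\{z\colon B(p,z)=0\}$ be the polar hyperplane of $p$; since $F(p)\neq 0$ one has $p\notin H_p$. A point $q\in Q'$ is a ramification point of $\pi$ exactly when its embedded tangent hyperplane contains $p$, i.e. when $q\in H_p$; hence the ramification locus is $R:=Q'\cap H_p$, a smooth quadric threefold. Because $p\notin H_p$, the projection $\pi_p$ restricts to an isomorphism $H_p\to\PP^4$, so $\pi$ restricts to an isomorphism $R\to B$ onto the branch locus $B:=\pi(R)$, again a quadric threefold. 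Being a double cover, $\pi$ satisfies $\pi^{*}B=2R$, so over a general point of $B$ the fibre of $\pi$ consists set-theoretically of the single corresponding ramification point.

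Next I would transport this back along the birational map $\omega\circ\mu$, whose indeterminacy is contained in $\Gamma=\C_{4,1}$. The ramification locus of $h_{4,1}$ equals the proper transform of $R$ under $(\omega\circ\mu)^{-1}$; by the identification made before the statement — the quotient of $\hess(j)$ by $j$ — this proper transform is precisely the quadric $\mathfrak Q\supset\Tan(\Gamma)$, so equivalently $\omega\circ\mu$ maps $\mathfrak Q$ birationally onto $R$. On the other hand the branch locus of $h_{4,1}$ is $B$, which we already identified with the $\SL(2,\CC)$--invariant quadric $\{b_0b_4-4b_1b_3+3b_2^2=0\}$; since the only $\SL(2,\CC)$--invariant quadric through $\Tan(\Gamma)$ is $\mathfrak Q$, we conclude $B=\mathfrak Q$. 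Therefore $h_{4,1}(\mathfrak Q)=\pi_p(\omega(\mu(\mathfrak Q)))=\pi(R)=B=\mathfrak Q$, and on a dense open subset of $\mathfrak Q$ the map $h_{4,1}|_{\mathfrak Q}$ factors as the birational map $\omega\circ\mu\colon\mathfrak Q\dashrightarrow R$ followed by the isomorphism $\pi|_R\colon R\to B=\mathfrak Q$; hence $h_{4,1}$ induces a birational transformation of $\mathfrak Q$ into itself. The two bullet points follow immediately: a general point $y$ of $\mathfrak Q$ is a general point of the branch locus $B$, hence has a single $h_{4,1}$--preimage $f$ — the associated ramification point — which lies in $\mathfrak Q$; so $y$ is the Hessian of the unique polynomial $f$, and conversely the general polynomial of $\mathfrak Q$ is recovered from its Hessian, which again lies in $\mathfrak Q$.

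The substantive content, which is essentially supplied already in the paragraphs preceding the statement, is the double identification of $\mathfrak Q$ with the ramification locus of $h_{4,1}$ (through the divisibility $j\mid\hess(j)$ forced by the parabolic points of $\Sec(\Gamma)$) and with the branch locus of $h_{4,1}$ (through $\SL(2,\CC)$--invariance and its uniqueness); granting these, everything else is formal. The one bookkeeping point to watch is that $\mathfrak Q$ contains the indeterminacy locus $\Gamma$ of $\mu$ and meets the hypersurface $\Sec(\Gamma)$ contracted by $h_{4,1}$; this is harmless, since $\Gamma$ has codimension two in the threefold $\mathfrak Q$, and $\mathfrak Q$, being irreducible and distinct from $\Sec(\Gamma)$, is not contained in the contracted locus, so the restriction of the birational map $\omega\circ\mu$ to $\mathfrak Q$ is still birational onto its image.
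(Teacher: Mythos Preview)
Your proposal is correct and follows essentially the same approach as the paper: the paper does not supply a separate proof, instead presenting the proposition as an immediate consequence of the preceding paragraphs, where it has already identified $\mathfrak Q$ both with the branch locus (via $\SL(2,\CC)$--invariance) and with the ramification locus (via $j\mid\hess(j)$). You have simply made this inference explicit through the factorization $h_{4,1}=\pi_p\circ\omega\circ\mu$ and the standard description of the ramification of the projection of a smooth quadric from an external point, which is exactly the intended reasoning.
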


\begin{remark}\label{rem:cr}

Given the polynomial \eqref {eq:f}, let $\alpha_1,\ldots, \alpha_4$ be its roots, and {we assume at least three of them are distinct}. We can form the cross ratio $\alpha=(\alpha_1\alpha_2\alpha_3\alpha_4)$.
There is an expression of $\alpha$ which remains unchanged when $\alpha$ is changed in one of the six different values assumed by the cross ratio by permuting $\alpha_1,\ldots, \alpha_4$, namely
\[
J=\frac {4(1-\alpha+\alpha^2)^3}{(\alpha+1)^2(1-2\alpha)^2(2-\alpha)^2},
\]
which is called the \emph{J--invariant} of the { degree 4} divisor of $\PP^1$  determined by the points $\alpha_1,\ldots, \alpha_4$ (see \cite [pp. 27--29]{EnrChi}). It is well known that two divisors of degree 4 are projectively equivalent if and only if they have the same $J$--invariant.

The function $J$ can then be expressed in terms of the coefficients of the polynomial \eqref {eq:f}. Precisely one has
\[
J=\frac 1{3^6\cdot 4^3}\frac {i^3}{j^2}
\]
where
\[
i:=a_0a_4-4a_1a_3 + 3a_2^2
\]
and $j$  is the polynomial defined in \eqref {eq:bu2}. Then $J = 0$ is equivalent to $i = 0$, which is the quadric containing $\Tan(\Gamma)$. When $J = 0$, the degree 4 divisor defined by the vanishing of \eqref{eq:f} is called \emph{anharmonic}, whereas
it is called \emph{harmonic} when $j = 0$, i.e., $J = \infty$.

If we take a general $g_4^1$ on $\PP^1$, defined by an equation of type \eqref {eq:f} with the coefficients $a_i$ depending linearly by $[\lambda,\mu]\in \PP^1$, for $i=0,\ldots,4$, then $J$ varies for the divisors of the $g^1_4$ and for a general value of $J$ there are exactly 6 divisors of the $g^1_4$ for which that value of $J$ is attained. This is no longer the case for the values $J=0$ and $J=\infty$. Indeed, the expression of $J$ tells us that there are only 2 anharmonic divisors in the $g^1_4$ (each counted with multiplicity 3) and 3 harmonic divisors (each counted with multiplicity 2). 

On the other hand there are special $g_4^1$s in which the $J$--invariant is constant. The typical example is the $g_4^1$ defined by the equation $\lambda x_0^4+\mu x_1^4=0$, the general divisor of which is harmonic.
\end{remark}

\begin{remark}\label{rem:syz}
Consider the equation
\begin{equation}\label{eq:syz}
 \mu x_0^4 + 6\lambda x_0^2x_1^2+\mu x_1^4 = 0,\quad\text{ with}\quad  [\lambda, \mu]\in \PP^1.
\end{equation}
 This defines  a $g_4^1$ on $\PP^1$, which is called a \emph{syzygetic} $g_4^1$, as well as any series which is transformed of it via a projective transformation of $\PP^1$.
 
The main property of this series is that it contains the Hessian divisor of every of its divisors.  In fact the Hessian of the divisors defined by \eqref {eq:syz} is easily seen to have equation
\[
\lambda\mu x_0^4+(\mu^2-3\lambda^2)x_0^2x_1^2+\lambda\mu x_1^4=0
\]
which clearly sits in the above $g_4^1$ on which we have the map
\begin{equation}\label{eq:hess}
[\lambda, \mu]\mapsto [\mu^2-3\lambda^2, 6\lambda\mu]
\end{equation}
sending the divisor with equation \eqref {eq:syz} corresponding to $[\lambda, \mu]\in \PP^1$ to its Hessian, i.e., the divisor corresponding to $[\mu^2-3\lambda^2, 6\lambda\mu]$. This correspondence is a $2 : 1$ map. In geometric terms, the line in $\Sigma(4, 1) \cong \PP^4$ corresponding to the above $g_4^1$ is mapped $2 : 1$ by $h_{4,1}$ to another line, rather than to a conic.

Let us compute the $J$--invariant for the divisors in the syzygetic $g_4^1$. One has 
\[
J=\frac {(3\lambda^2+\mu^2)^3} {3^6\cdot 4^3 \lambda ^2(\mu^2-\lambda^2)^2}.
\]
So we see that the $J$--invariant varies inside the syzygetic $g_4^1$. As a consequence, any syzygetic $g_4^1$ is generated by any divisor of degree 4, not in the cone locus, and by its Hessian.

Let us see what are the coincidences of the map \eqref {eq:hess}. We expect three of them, i.e., the intersection of the graph of the $2:1$ map, which is a curve of type $(1,2)$ on $\PP^1\times \PP^1$, with the diagonal, which is a curve of type $(1,1)$.
In fact the coincidences are defined by the degree 3 equation
\begin{equation*}\label{eq:bu22}
\det \left(\begin{matrix} 
\mu^2-3\lambda^2&6\lambda\mu \\
\lambda &\mu\end{matrix}\right)=0,
\end{equation*}
which has the solutions $\mu=0$ and $\mu=\pm 3\lambda$. The corresponding divisors are defined by the equations $x_0^2x_1^2=0$ and $(x_0^2\pm x_1^2)^2=0$, i.e., they consist of two points both with multiplicity 2. We call such a divisor a \emph{bi--double point}: they coincide with their Hessian. 

Each of the bi--double points in the syzygetic $g_4^1$ is also the Hessian of another divisor in the same series.   For instance, the one given by $\mu=0$ also comes form the divisor corresponding to $\lambda=0$, i.e., from the harmonic divisor with equation $x_0^4+x_1^4$. 
Similarly, the divisors given by $\mu=\pm 3\lambda$ come from the divisors corresponding to $[\lambda,\mu]$ such that 
\[
\frac {6\lambda\mu}{\mu^2-3\lambda^2}=\pm 3
\]
that, besides the obvious solution $\mu=\pm 3\lambda$ has also the solution $\mu=\pm \lambda$, which corresponds to the harmonic divisors with equations $x_0^4\pm 6x_0^2x_1^2+x_1^4=0$. In conclusion we have that: \emph{the harmonic degree 4 divisors are characterized by the fact of being the only reduced divisors having a bi--double point as Hessian}.

Next let us consider an anharmonic divisor, with equation $3x_0^4+6x_0^2x_1^2-x_1^4=0$. An easy computation shows that it coincides with the Hessian of its Hessian. Then we consider the composition of the { $2:1$} map defined by \eqref {eq:hess} with itself. It is a  { $4:1$} map, which has 5 coincidences. Three of them are given by the three bi--double points in the 
syzygetic $g_4^1$. Two more are given by the two anharmonic divisors  in the syzygetic $g_4^1$, which are given by the equation \eqref {eq:syz}, with $\mu^2+3\lambda^2=0$. This can be checked with a direct computation that can be left to the reader.

In conclusion we have that: \emph{the anharmonic degree 4 divisors are characterized by the fact of being the only reduced divisors which are Hessian of their Hessian.}
\end{remark}

\begin{remark}\label{rem:AIK} In the paper \cite {AIK}, the authors introduce a map
$$\Phi_{d,r}: \Sigma(d,r)\dasharrow \Sigma((r+1)(d-2),r)$$
which is different from the Hessian map, though it has the same source and target. The map $\Phi_{4,1}$ is again of degree 2. So, as the Hessian map, it determines a birational involution of $\Sigma(4,1)$. According to \cite [Theorem 3.2]{AIK}, this involution is uniquely determined by the property of being equivariant under the natural ${\rm SL}(2,\CC)$ action. Therefore, though $\Phi_{4,1}$ and $h_{4,1}$ are different, they determine the same birational involution on $\Sigma(4,1)$. This means that there is a non--trivial birational map $\phi$ of $\Sigma(4,1)$ such that $\Phi_{4,1}=\phi\circ h_{4,1}$. 
\end{remark}

\section{The case $r=2, d=3$}\label {sec:2,3}

In this section we examine the map $h_{3,2}: 
\Sigma(3,2)\cong \PP^9\to \Sigma(3,2)\cong \PP^9$. This is the only other case, besides $h_{4,1}$, in which the domain and the target of the Hessian map coincide. Most of the results of this section are based on \cite {DK} and on  \cite[Vol. 2, Chapt. III] {EnrChi} and are essentially known in the current literature, although we put them here in our general perspective. 

Let us consider the Veronese surface $V_{3,2}$ in $\Sigma(3,2)\cong \PP^9$, which is the locus of curves of the type $3L$, with $L$ a line of the plane. 

\begin{lemma}\label{lem:triv} The cone locus $\C_{3,2}$ coincides with $\Sec(V_{3,2})$.
\end{lemma}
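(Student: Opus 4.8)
The plan is to unravel the definitions on both sides and show each locus equals the set of plane cubics whose polar map has rank $\le 2$. First I would recall that a plane cubic $F=\{f=0\}$ is a cone precisely when the three partials $f_0,f_1,f_2$ are linearly dependent, i.e. when the polar map $f^*\colon V^\vee\to \Sym^2(V)$ has rank at most $2$; equivalently there is a linear change of coordinates after which $f$ involves only two of the variables, so $F$ is (the triangle or a union of lines through a point, i.e.) literally a cone with vertex a point of $\PP^2$. Geometrically such an $f$ is a binary cubic in two linear forms $\ell_0,\ell_1$, hence a sum of at most $3$ cubes of linear forms $\ell_i$ all lying in the pencil $\langle \ell_0,\ell_1\rangle$; this is exactly a point of $\Sec(V_{3,2})$, since $\Sec(V_{3,2})$ is the closure of the set of cubics of the form $\lambda L_1^3+\mu L_2^3$ with $L_1,L_2$ distinct lines.

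Concretely, I would argue both inclusions. For $\Sec(V_{3,2})\subseteq \C_{3,2}$: a general point of $\Sec(V_{3,2})$ is $f=\lambda L_1^3+\mu L_2^3$; choosing coordinates with $L_1=x_0$, $L_2=x_1$, one sees $f$ does not involve $x_2$, so $f_2\equiv 0$ and the partials are dependent, whence $f\in\C_{3,2}$; since $\C_{3,2}$ is closed (it is defined by the vanishing of the $3\times 3$ minors, equivalently by the maximal minors of the catalecticant/partials matrix) the whole secant variety lies in $\C_{3,2}$. For the reverse inclusion $\C_{3,2}\subseteq \Sec(V_{3,2})$: if $f$ is a cone, after a coordinate change $f=g(x_0,x_1)$ is a binary cubic; a binary cubic over an algebraically closed field factors as a product of three linear forms (with multiplicity), hence by the classical normal forms for binary cubics it is a linear combination of at most two (possibly equal, in the degenerate cusp case a limit) cubes of linear forms in $x_0,x_1$ — that is, $f$ lies on a chord (or tangent line) of the Veronese curve $\{(a x_0+bx_1)^3\}\subset V_{3,2}$, so $f\in\Sec(V_{3,2})$.

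The step I expect to be the main obstacle is making the secant inclusion genuinely scheme-theoretic rather than merely set-theoretic, and handling the degenerate binary cubics (a perfect cube $\ell^3$, or $\ell^2 m$ with $\ell\neq m$) cleanly: $\ell^3$ is a point of $V_{3,2}$ itself, while $\ell^2 m$ is not literally a sum of two cubes but lies on the tangent line to the Veronese curve at $\ell^3$, so one must invoke that $\Sec(V_{3,2})$ contains all these tangent lines (it is the closure of the chord locus). Since the lemma as stated is an equality of reduced loci, it suffices to match closed points plus note both sides are irreducible of the same dimension $7$: $\Sec(V_{3,2})$ is a classical irreducible $7$-fold in $\PP^9$ (the $3\times 3$ minors of the symmetric catalecticant), and $\C_{3,2}$ with the cone scheme structure is cut out by the condition $\rank(f_0\ f_1\ f_2)\le 2$, which is the same determinantal ideal up to the identification $\Sym^2(V)\cong$ (space of partials); so the two loci coincide. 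I would therefore present the proof as: (1) identify the defining conditions, (2) check set-theoretic equality using binary-cubic normal forms including the degenerate cases, (3) conclude by irreducibility and dimension count, citing the classical description of $\Sec(V_{3,2})$ and of cones.
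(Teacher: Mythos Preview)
Your argument is essentially sound, but it contains a numerical slip that you should fix: both $\Sec(V_{3,2})$ and $\C_{3,2}$ have dimension $5$, not $7$. Indeed $V_{3,2}$ is a surface, so its (first) secant variety has expected dimension $2\cdot 2+1=5$, and $V_{3,2}$ is not defective in this sense; on the other side, a cone is determined by a vertex in $\PP^2$ (two parameters) together with a binary cubic (a $\PP^3$), again giving dimension $5$. Your ``$7$-fold'' claim and the accompanying remark about the $3\times 3$ minors of the symmetric catalecticant suggest a confusion with a higher secant or with a different determinantal locus; in any case, the number is wrong.

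That said, the error is not fatal to your proof, because you actually establish both inclusions directly. Your argument that a cone is, after a change of coordinates, a binary cubic and hence lies on a chord or tangent line of the rational normal cubic curve sitting inside $V_{3,2}$ is correct and gives $\C_{3,2}\subseteq \Sec(V_{3,2})$ without any dimension count. The paper's proof is more economical: it observes only the easy inclusion $\Sec(V_{3,2})\subseteq \C_{3,2}$, then invokes irreducibility of both loci and equality of dimensions (namely $5$) to conclude. What your approach buys is an explicit description of every point of $\C_{3,2}$ as a limit of sums of two cubes, including the degenerate cases $\ell^3$ and $\ell^2 m$; what the paper's approach buys is brevity, at the cost of citing non-defectivity. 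Either way the result is only a set-theoretic (equivalently, reduced-scheme) equality, which is all the lemma asserts; your worry about scheme structure is not needed here.
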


\begin{proof} It is immediate that $\Sec(V_{3,2})$ is contained in $\C_{3,2}$. Moreover they are both irreducible, of dimension 5 (because $V_{3,2}$ is not defective). This proves the assertion.  \end{proof}

Recall  that $h_{3,2}$ is defined by a linear system $\H_{3,2}$ of cubics containing $\C_{3,2}$.

\begin{proposition}\label{prop:ttt}  $\H_{3,2}$ is a linear system of cubics singular  along $V_{3,2}$.
\end{proposition}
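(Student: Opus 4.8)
The plan is to show that $\H_{3,2}$, being a linear system of cubics in $\Sigma(3,2)\cong\PP^9$ vanishing on the cone locus $\C_{3,2}=\Sec(V_{3,2})$, is actually singular along the Veronese surface $V_{3,2}$. The natural route is to exploit the $\SL(3,\CC)$--equivariance together with a pointwise computation at a single well-chosen point of $V_{3,2}$, just as was done for $h_{d,1}$ near $\Gamma$ in the proof of Proposition~\ref{prop:GN}.

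First I would set up coordinates $[x_0,x_1,x_2]$ on $\PP^2$ and write a general cubic as $f=\sum_{|\alpha|=3}a_\alpha x^\alpha$, so that $\Sigma(3,2)$ has homogeneous coordinates $(a_\alpha)$ and the entries $f_{ij}$ of the Hessian matrix are linear in the $a_\alpha$; hence $\hess(f)=\det(f_{ij})$ is a cubic form in the $a_\alpha$, and $\H_{3,2}$ is spanned by its coefficients (as polynomials in $x$). Since $\SL(3,\CC)$ acts transitively on $V_{3,2}$, it suffices to check that every member of $\H_{3,2}$ is singular at the single point $p\in V_{3,2}$ corresponding to $f=x_0^3$, i.e.\ $a_{(3,0,0)}=1$ and all other $a_\alpha=0$. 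For this I would show that not only $\hess$ vanishes at $p$ (which is clear since $p\in\C_{3,2}\subset\GN_{3,2}$) but also all first partials $\partial\,\hess/\partial a_\alpha$ vanish at $p$. Concretely, $\partial(\det(f_{ij}))/\partial a_\alpha$ is the sum over entries of $\partial f_{ij}/\partial a_\alpha$ times the corresponding cofactor of the matrix $(f_{ij})$; each such cofactor is a $2\times2$ minor of $(f_{ij})$, evaluated at $p$. At $p=x_0^3$ the Hessian matrix is $\operatorname{diag}(6x_0,0,0)$, so every $2\times2$ minor of $(f_{ij})(p)$ vanishes identically in $x$. Therefore every $\partial\,\hess/\partial a_\alpha$ vanishes at $p$, i.e.\ every cubic in $\H_{3,2}$ has a singular point at $p$, hence $\H_{3,2}$ is singular along all of $V_{3,2}$ by equivariance.

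Alternatively, and perhaps more cleanly, I would phrase this without coordinates: for any $f$ and any point $q$ of $\PP^2$, if the Hessian matrix $(f_{ij})(q)$ has rank $\le 1$ then all its $2\times2$ minors vanish, so differentiating $\det$ kills the $\hess$ at the corresponding point of $\Sigma(3,2)$ to first order; for a triple line $3L$ the Hessian matrix has rank exactly $1$ at every point of $\PP^2$ (it is, up to scalar, the $1\times1$ "block" supported on the normal direction of $L$, in suitable coordinates), which forces the whole linear system to be singer at $[3L]$. The rank-one claim for $f=\ell^3$ is the key local input: writing $\ell=x_0$ one has $f_{00}=6x_0$ and $f_{ij}=0$ otherwise, so the matrix is manifestly of rank $\le1$ everywhere, and this property is $\SL(3,\CC)$-invariant.

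I expect the only real obstacle to be bookkeeping: making sure that "singular along $V_{3,2}$" is interpreted correctly (every member of the linear system is singular at every point of $V_{3,2}$, equivalently $V_{3,2}$ lies in the base locus with multiplicity $\ge2$), and checking that the vanishing of all $2\times2$ minors of $(f_{ij})(p)$ genuinely forces the vanishing of $\partial\,\hess/\partial a_\alpha$ at $p$ — this is just the product/Laplace rule for the derivative of a determinant, but one should be careful that the cofactors are evaluated at $p$ while the $\partial f_{ij}/\partial a_\alpha$ are constant matrices in $x$. Once the rank-one observation for triple lines is in hand, the equivariance reduction makes the rest routine, so no serious difficulty remains.
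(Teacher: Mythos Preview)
Your argument is correct and self-contained, but it follows a different route from the paper's. The paper argues geometrically: every cubic in $\H_{3,2}$ contains $\Sec(V_{3,2})$; the tangent cone of $\Sec(V_{3,2})$ at any point $x\in V_{3,2}$ is the cone over $V_{3,2}$ with vertex the tangent plane $T_xV_{3,2}$, which is non-degenerate in $\PP^9$ (this is quoted from \cite{CilRu}); hence no hypersurface containing $\Sec(V_{3,2})$ can have a well-defined tangent hyperplane at $x$, so it must be singular there. Your approach instead differentiates $\det(f_{ij})$ directly via the cofactor expansion and observes that for $f=\ell^3$ the Hessian matrix has rank $\le 1$ everywhere, forcing all $2\times 2$ minors (cofactors) to vanish identically, so every first partial $\partial\hess/\partial a_\alpha$ vanishes at $[\ell^3]$. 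Your proof is more elementary and needs no external input about tangent cones of secant varieties; the paper's proof is more conceptual and actually proves the stronger statement that \emph{any} cubic through $\Sec(V_{3,2})$, not just the Hessian cubics, is singular along $V_{3,2}$.
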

\begin{proof} All cubics in $\H_{3,2}$ contain $\C_{3,2}=\Sec(V_{3,2})$. On the other hand 
$\Sec(V_{3,2})$ is singular along $V_{3,2}$. Moreover, if $x$ is a point of $V_{3,2}$, the tangent cone to $\Sec(V_{3,2})$ at $x$ is the cone over $V_{3,2}$ with vertex the tangent plane to $V_{3,2}$ at $x$ (see \cite [Thm. 3.1]{CilRu}), and this cone is non--degenerate in $\PP^9$. This implies that a cubic containing $\Sec(V_{3,2})$ is singular at any point of $V_{3,2}$.\end{proof}

\begin{proposition}\label{prop:GGNN}
The Gordan--Noether scheme structure on the cone locus  $\C_{3,2}$ is non--reduced. 
\end{proposition}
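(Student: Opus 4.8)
The plan is to prove non-reducedness by a Zariski tangent space computation, in the spirit of Proposition~\ref{prop:GN} but with the opposite conclusion. The scheme $\GN_{3,2}$ is cut out in $\Sigma(3,2)\cong\PP^9$ by the ten coefficients of $\hess(f)$, regarded as cubic polynomials in the coefficients of $f$; since $\hess(f)\equiv 0$ whenever $f$ is a cone, all these cubics vanish on the cone locus, so $\C_{3,2}^{\mathrm{red}}=\Sec(V_{3,2})$ is a closed subscheme of $\GN_{3,2}$. Moreover, by Hesse's Theorem~\ref{thm:hesse} (case $r=2$), $\GN_{3,2}$ is supported exactly on the irreducible $5$-fold $\C_{3,2}$. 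It therefore suffices to exhibit one point $[f_0]\in\C_{3,2}$ which is a smooth point of $\C_{3,2}^{\mathrm{red}}$ but at which $\dim T_{[f_0]}\GN_{3,2}>5$: if $\GN_{3,2}$ were reduced it would coincide with $\C_{3,2}^{\mathrm{red}}$, and then the two tangent spaces at $[f_0]$ would be equal.

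I would take $f_0=x_0^3+x_1^3$, the cone with vertex $[0:0:1]$ over a general binary cubic; since a general cone is, up to $\GL(3,\CC)$, of this shape, $[f_0]$ is a general point of $\C_{3,2}$, hence a smooth point of $\C_{3,2}^{\mathrm{red}}$. One can also see this directly: by Terracini's Lemma the tangent space at $f_0$ to the affine cone over $\Sec(V_{3,2})$ equals $x_0^2V+x_1^2V$, which is $6$-dimensional, so the projectivized tangent space has dimension $5=\dim\Sec(V_{3,2})$; note also that $[f_0]\notin V_{3,2}$, so there is no conflict with Proposition~\ref{prop:ttt}.

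Next I would compute the differential of the Hessian map at $f_0$. Writing it in affine form as $\tilde h\colon\Sym^3(V)\to\Sym^3(V)$, $f\mapsto\det(f_{ij})$, the Jacobi formula for the derivative of a determinant gives $d\tilde h_{f_0}(g)=\operatorname{tr}\!\big(\operatorname{adj}(H(f_0))\cdot H(g)\big)$, where $H(f)=(f_{ij})$ is the Hessian matrix. Here $H(f_0)=\operatorname{diag}(6x_0,6x_1,0)$, so $\operatorname{adj}(H(f_0))=\operatorname{diag}(0,0,36x_0x_1)$ and hence $d\tilde h_{f_0}(g)=36\,x_0x_1\,g_{22}$, where $g_{22}=\partial^2g/\partial x_2^2$. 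As $g$ varies over cubics, $g_{22}$ varies over all linear forms, so the image of $d\tilde h_{f_0}$ is the $3$-dimensional space $x_0x_1\cdot V$; consequently $\ker d\tilde h_{f_0}$ has dimension $10-3=7$, and passing to $\PP^9$ we obtain $\dim T_{[f_0]}\GN_{3,2}=6$. Since $[f_0]$ is a smooth point of $\C_{3,2}^{\mathrm{red}}$ at which this tangent space jumps from $5$ to $6$, the local ideal of $\GN_{3,2}$ at $[f_0]$ is strictly contained in that of $\C_{3,2}^{\mathrm{red}}$, so $\GN_{3,2}$ is not reduced; and as $[f_0]$ is a general point of $\C_{3,2}$, $\GN_{3,2}$ is in fact generically non-reduced along the whole cone locus.

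The main obstacle is the tangent-space computation and, in particular, verifying that the image of $d\tilde h_{f_0}$ has dimension exactly $3$ (not larger): this reduces to the elementary facts that $g\mapsto\partial^2g/\partial x_2^2$ maps $\Sym^3(V)$ onto $\Sym^1(V)$ and that multiplication by $x_0x_1$ is injective on $\Sym^1(V)$. A secondary point to handle cleanly is the smoothness of $[f_0]$ on $\C_{3,2}^{\mathrm{red}}$, which follows either from $[f_0]$ being a general point of the cone locus or from the Terracini computation above; once these are in place, the rest of the argument is formal.
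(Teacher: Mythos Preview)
Your proposal is correct and follows essentially the same approach as the paper: compute the Zariski tangent space to $\GN_{3,2}$ at the point $[x_0^3+x_1^3]$ and observe that it has dimension $6$ (codimension $3$), strictly larger than the dimension $5$ of the cone locus. Your write-up is in fact more explicit than the paper's, which leaves the computation to the reader; your use of the Jacobi formula $d\tilde h_{f_0}(g)=\operatorname{tr}\!\big(\operatorname{adj}(H(f_0))\cdot H(g)\big)=36\,x_0x_1\,g_{22}$ is a clean way to carry it out, and your Terracini verification that $[f_0]$ is a smooth point of $\C_{3,2}^{\mathrm{red}}$ is a useful addition.
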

\begin{proof}
A computation (which can be left to the reader) of the tangent space to $\GN_{3,2}$ at the point corresponding to $x_0^3+x_1^3$,
analogous to the proof of Proposition \ref{prop:GN},
shows it has codimension $3$, smaller than $4$, which is the codimension of the cone locus $\C_{3,2}$ in $\Sigma(3,2)\cong\PP^9$. \end{proof}

\begin{remark}\label{rem:gol}  The orbit of $x_0^3+x_1^3$ via the  $\SL(3,\CC)$ action is dense in $\C_{3,2}$, thus $\GN_{3,2}$ is non--reduced all along $\C_{3,2}$. Actually, 
a Macaulay2 \cite{M2} computation shows that  $\GN_{3,2}$ has degree $30$, which is the double of the degree of the cone locus. This tells us that $\GN_{3,2}$ is a double structure on $\C_{3,2}$. \end{remark}

\begin{remark} We have checked in a similar way that $\GN_{3,3}$ is non--reduced at a general point of $\C_{3,3}$,
and $\GN_{3,4}$ is non--reduced at a general point of $\C_{3,4}$. On the contrary,
 $\GN_{3,4}$ is reduced at the point corresponding to the Perazzo cubic threefold recalled in the Introduction, so that its
$\SL(5)$-orbit makes a $18$-dimensional reduced component of $\GN_{3,4}$.\end{remark}

Consider now $\Sec_2(V_{3,2})$ the variety of 3--secant planes to $V_{3,2}$, which is a hypersurface in $\Sigma(3,2)\cong\PP^9$, called the \emph{Aronhold hypersurface}. Any trisecant plane to $V_{3,2}$ is cut out by $\H_{3,2}$ in a fixed cubic, namely the union of the three lines pairwise joining the three points of intersection of the plane with $V_{3,2}$. Thus this plane is contracted to a point by $h_{3,2}$. Precisely, up to a projective transformation, we may assume that the three points of $V_{3,2}$ are the triple lines $x_0^3=0$, $x_1^3=0$, $x_2^3=0$. Hence the plane spanned by them parameterizes all cubics of the form $ax_0^3+bx_1^3+cx_2^3=0$. The Hessian of the general such cubic has equation $x_0x_1x_2=0$, i.e., it is the trilateral union of the three original (independent) lines. In conclusion the hypersurface $\Sec_2(V_{3,2})$ is contracted by $h_{3,2}$ to the 6--dimensional variety, isomorphic to the triple symmetric product of $\Sigma(1,2)$, the dual of $\PP^2$, whose general point is a trilateral. 

 { \begin{remark}\label{rem:res} Taking into account Proposition  \ref {prop:GGNN}, we understand that the resolution of the indeterminacies of $h_{3,2}$ is complicated. However it is possible to see what happens in some specific cases.

Consider a general point of  $\Sec(V_{3,2})$, the cone locus. Up to a change of coordinates, we may assume that this point coincides with the triple of lines with equation $\alpha=x_0x_1(x_0+x_1)=0$. Let $f(x_0,x_1,x_2)$ be a general homogeneous polynomial of degree 3. Consider the pencil of cubics with equation $\alpha+tf=0$, with $t\in \CC$. The limit of $\hess(\alpha+tf)$, when $t\to 0$, is easily seen to be 
$-4f_{22} (x_0^2-x_0x_1-x_1^2)$, namely a suitable trilateral.

However one has to be careful. Indeed, if $\alpha=x_0^3$ and $f=x_1^3+x_2^3+36x_0x_1x_2$, the limit of $\hess(\alpha+tf)$, when $t\to 0$, 
is $x_0(x_1x_2-36x_0^2)$, which splits as a smooth conic plus a secant line.
\end{remark} }

\begin{thm}\label{thm:im} The Hessian map $h_{3,2}$ is dominant and generically $3:1$.
\end{thm}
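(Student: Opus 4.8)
The plan is to prove dominance and compute the generic degree separately. For dominance, it suffices to exhibit one cubic $F$ where the differential of $h_{3,2}$ has full rank $9$; by Proposition \ref{thm:fin}-type arguments one can use a Sylvester-generic cubic, e.g. $f=x_0^3+x_1^3+x_2^3+x_0x_1x_2$ (or more generally a sum of four cubes of general linear forms), and verify the Jacobian of the ten coefficients of $\hess(f)$ against the ten coefficients of $f$ has rank $10$, hence the map on $\PP^9\dashrightarrow\PP^9$ is dominant. Since this is a single explicit matrix computation, I would record it as ``a direct computation, which can be left to the reader,'' or invoke a Macaulay2 check as done elsewhere in the paper (cf. Remark \ref{rem:gol}).

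For the degree-$3$ statement, I would work over the dense open locus of cubics with Waring rank $4$, i.e. those of the form $f=\ell_0^3+\ell_1^3+\ell_2^3+\ell_3^3$ with $\ell_0,\dots,\ell_3$ general linear forms in $x_0,x_1,x_2$ (Sylvester/Clebsch: the general plane cubic is a sum of four cubes, and the Sylvester pentahedron — here a ``quadrilateral'' of four lines — is \emph{not} unique for plane cubics). In coordinates where $\ell_0,\ell_1,\ell_2$ are the coordinate functions, $f=x_0^3+x_1^3+x_2^3+\ell^3$ with $\ell=a x_0+b x_1+c x_2$; the Hessian matrix is $6\,\mathrm{diag}(x_0,x_1,x_2)+6\ell\cdot(\ell_i\ell_j)_{ij}$, a rank-one update of a diagonal matrix, so $\hess(f)=6^3\big(x_0x_1x_2 + \ell\cdot(a^2 x_1 x_2 + b^2 x_0 x_2 + c^2 x_0 x_1)\big)$ up to scalar. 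Thus $\Hess(F)$ is a plane cubic that is singular at the three points $[1,0,0],[0,1,0],[0,0,1]$ — i.e. it is one of the classical \emph{syzygetic} cubics associated to the triangle $x_0x_1x_2=0$. The problem then becomes: given a general cubic $G$ in the image, how many rank-$4$ cubics $F$ with $h_{3,2}(F)=G$ are there? Equivalently, reading off the three singular points of $G=\Hess(F)$ recovers a distinguished triangle, and among the remaining data the linear form $\ell$ (i.e. the point $[a,b,c]$) is recovered by solving: matching the coefficients of $x_0^2x_1$, etc., in $\ell\cdot(a^2x_1x_2+b^2x_0x_2+c^2x_0x_1)$ against $G$, one gets a system forcing $(a^3,b^3,c^3)$ and the pairwise products to be determined, and after accounting for the projective normalization one finds exactly $3$ solutions (the ambiguity being a cube root of unity acting on $\ell$, or equivalently the fact that the ``fourth vertex'' of the Sylvester configuration can be chosen in three ways once the triangle is fixed).

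The key steps, in order, are: (1) reduce to the rank-$4$ locus and use the normal form $f=x_0^3+x_1^3+x_2^3+\ell^3$; (2) compute $\hess(f)$ via the rank-one-update determinant formula and identify the image as the family of triangle-syzygetic cubics; (3) show a general $G$ in the image has exactly one associated triangle (its singular locus consists of three reduced points in general position, forced by genericity), pinning down $x_0,x_1,x_2$ up to scaling; (4) solve the finite system recovering $\ell$ and count the solutions, getting $3$; (5) combine with Theorem \ref{prop:r+2}-style injectivity input if available, or argue directly that no further collapsing occurs, to conclude the generic fiber has length $3$. I expect step (4) — the solution count — to be the main obstacle: one must carefully track the scalar and projective ambiguities so that the naive count does not over- or under-count, and in particular confirm that the three preimages are genuinely distinct rank-$4$ cubics and not a single cubic with multiplicity. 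This is exactly the classical fact (going back to Clebsch, see \cite{DK} and \cite[Vol. 2, Chapt. III]{EnrChi}) that the polar/syzygetic correspondence for plane cubics is $3:1$, which I would cite to corroborate the computation.
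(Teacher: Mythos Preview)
Your step (3) contains a concrete error: the Hessian of a general rank-$4$ plane cubic is \emph{not} singular at the three coordinate points. From your own formula
\[
\hess(f)\;\propto\;x_0x_1x_2+\ell\,(a^2x_1x_2+b^2x_0x_2+c^2x_0x_1),
\]
the gradient at $[1,0,0]$ is $(0,\,ac^2,\,ab^2)$, which is nonzero for generic $a,b,c$. In fact the Hessian of a smooth non-anharmonic plane cubic is again a \emph{smooth} cubic --- this is Proposition~\ref{prop:off}(iv) --- so there is no singular locus from which to read off a distinguished triangle, and the recovery scheme in steps (3)--(4) collapses. (Proposition~\ref{prop:singhess} only guarantees that $\Hess(F)$ passes through the six points $l_i=l_j=0$ with multiplicity $d-2=1$, i.e.\ as smooth points; and since the rank-$4$ decomposition of a general plane cubic is not unique, no such configuration of six points is canonically attached to $\Hess(F)$ either.) Your fallback in step (5), invoking Theorem~\ref{prop:r+2}, would --- if it applied here --- yield generic degree $1$, not $3$; the case $r=2$, $d=3$ is precisely where that line of argument breaks down, because the Hessian is smooth and the Waring decomposition is non-unique.

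The paper's proof avoids all of this with a two-line reduction: every smooth plane cubic is $\SL(3)$-equivalent to a member of the Hesse pencil $x_0^3+x_1^3+x_2^3-3t\,x_0x_1x_2=0$, and a direct computation shows that the Hessian of the member at $t$ is the member at $s=(4-t^3)/(3t^2)$. Since this rational map $t\mapsto s$ on $\PP^1$ has degree $3$ and the Hessian map is $\SL(3)$-equivariant, both dominance and the generic degree follow at once. This is much closer in spirit to the syzygetic-pencil analysis for $h_{4,1}$ in \S\ref{sec:1,4} than to the rank-$(r+2)$ machinery you are trying to import.
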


\begin{proof} It is well known that every smooth plane cubic is $\SL(3)$-equivalent to a member of the pencil
\begin{equation}\label{eq:stanfinite}
x_0^3+x_1^3+x_2^3-3tx_0x_1x_2=0, \quad \text{for some}\quad t\in \CC.
\end{equation}
For a modern reference see \cite[Lemma 1]{ArtDolg}, this pencil is classically called the \emph{syzygetic pencil} or \emph{Hesse pencil},
see next Proposition \ref{prop:sssig}.
A direct computation shows that the Hessian of the cubic with equation \eqref {eq:stanfinite} is the cubic with equation
\[
x_0^3+x_1^3+x_2^3-3sx_0x_1x_2=0, \quad \text{with}\quad s=\frac {4-t^3}{3t^2}.
\]
Since the Hessian map is $\SL(3)$-equivariant, the result follows.
\end{proof} 

Next a few words about the $J$--invariant of a cubic. Consider a smooth cubic $F\subset \PP^2$ with equation
\[
f(x_0,x_1,x_2)=\sum_{i+j+k=3}a_{ijk}x_0^ix_1^jx_2^k=0.
\]
We will denote by $\bf a$ the vector of the coefficients of the polynomial $f$, lexicographically ordered. { The classical} \emph{Salmon's theorem} {(see \cite [p. 189]{EnrChi2})} says that given a general point $p\in F$, the $J$--invariant of the four tangents to $F$ through $p$ (different from the tangent at $p$), does not depend on $p$. It is called the \emph{$J$--invariant}, or \emph {modulus}, of the cubic $F$ and two cubics are projectively equivalent if and only if they have the same modulus. 
A cubic is called \emph{harmonic} [resp. \emph {anharmonic}] if $J=\infty$ [resp. if $J=0$]. The concept of $J$--invariant  can be extended to nodal cubics in which case $J=1$. 

We record the following theorem (see \cite [p. 199]{EnrChi2}):

\begin{thm}[Aronhold's Theorem]\label{thm: aron} A cubic is anharmonic if and only if it is projectively equivalent to the \emph{Fermat cubic} with equation
\[
x_0^3+x_1^3+x_2^3=0.
\]
\end{thm}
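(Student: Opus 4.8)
The plan is to use the $J$-invariant and the explicit description of smooth cubics as members of the Hesse pencil established in the proof of Theorem \ref{thm:im}. Recall every smooth plane cubic is $\SL(3)$-equivalent to one of the form $x_0^3+x_1^3+x_2^3-3tx_0x_1x_2=0$; write $E_t$ for this cubic. Since anharmonic means $J=0$, the proof reduces to computing the $J$-invariant of $E_t$ as a function of $t$ and identifying for which values of $t$ one has $J=0$. A classical formula (which can be obtained directly, or quoted from \cite{ArtDolg} or \cite[Vol.~2, Chapt.~III]{EnrChi}) gives
\[
J(E_t)=\frac{t^3(t^3+8)^3}{(t^3-1)^3}
\]
up to the normalizing constant; in any case $J(E_t)=0$ precisely when $t=0$ or $t^3+8=0$, i.e., $t^3=0$ or $t^3=-8$. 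So the plan is: first reduce to the Hesse pencil, then compute $J$ on the pencil, then solve $J=0$.

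Next I would examine the cubics $E_t$ with $t=0$ and with $t^3=-8$, i.e., $t\in\{-2,-2\omega,-2\omega^2\}$ where $\omega$ is a primitive cube root of unity. For $t=0$ we get exactly the Fermat cubic $x_0^3+x_1^3+x_2^3=0$, so that direction of the equivalence is immediate. For the other branch, the key claim is that each $E_{-2}$, $E_{-2\omega}$, $E_{-2\omega^2}$ is itself $\SL(3)$-equivalent to the Fermat cubic. Concretely, after the scaling $x_i\mapsto \zeta_i x_i$ one checks that $x_0^3+x_1^3+x_2^3-3(-2)x_0x_1x_2 = x_0^3+x_1^3+x_2^3+6x_0x_1x_2$ factors, under the substitution $x_i \mapsto$ (suitable linear combinations diagonalizing the cyclic symmetry), into a sum of cubes; the cleanest way to see this is that $E_{-2}$ is a \emph{triangle} degenerate only at the level of $J$ — more precisely, the three points $t^3=-8$ are exactly the values where the Hesse pencil acquires its extra automorphisms, and the standard normal-form theory (\cite{ArtDolg}) shows $E_t\cong E_{t'}$ as abstract curves iff $j(E_t)=j(E_{t'})$, while for anharmonic $j=0$ the curve has the extra $\ZZ/3$ automorphism, forcing projective equivalence to Fermat. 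I would spell this out by exhibiting the explicit change of coordinates sending $x_0^3+x_1^3+x_2^3+6x_0x_1x_2$ to $y_0^3+y_1^3+y_2^3$, which is a finite check.

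For the converse direction — that the Fermat cubic is anharmonic — one computes $J$ of the four tangent lines to $x_0^3+x_1^3+x_2^3=0$ through a general point $p$ and finds $J=0$ directly; alternatively, invoke that the Fermat cubic has automorphism group containing a $\ZZ/3$ acting on the $j$-line fixing $j$, which only happens at $j=0$, i.e., $J=0$. Either computation is short. Combining both directions with the reduction to the Hesse pencil gives: a smooth cubic is anharmonic iff it is projectively equivalent to some $E_t$ with $t=0$ or $t^3=-8$, iff it is projectively equivalent to the Fermat cubic.

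The main obstacle is the bookkeeping in the second step: verifying that the three cubics $E_t$ with $t^3=-8$ really are $\SL(3)$-equivalent to Fermat, rather than merely abstractly isomorphic to it as curves. Abstract isomorphism of smooth plane cubics does imply projective equivalence (since the embedding is by the complete linear system $|\O(3)| = |3\cdot(\text{point})|$, canonically determined), so one could simply cite this principle; but to keep the argument self-contained I would prefer to produce the explicit linear substitution. That substitution is classical — it is the one diagonalizing the Hessian-group action on the Hesse pencil — and writing it down, while routine, is the one place where an unmotivated computation is unavoidable. I would present it as a displayed change of variables and leave the verification that it works to the reader.
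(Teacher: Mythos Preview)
The paper does not prove this theorem; it simply records it with a citation to \cite[p.~199]{EnrChi2}. So there is no proof in the paper to compare against.

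Your strategy is sound and would yield a self-contained argument, but a few points need tightening. First, the formula you quote for $J(E_t)$ does not match the paper's convention $J=S^3/T^2$: your denominator $(t^3-1)^3$ vanishes at the four \emph{singular} fibres of the Hesse pencil, whereas in the paper's normalization $T$ vanishes at the six \emph{harmonic} cubics (indeed, on the Hesse pencil the paper computes $T=0$ at $t^6-20t^3-8=0$). This does not affect your argument, since you only need the zeros of $J$, i.e.\ the zeros of $S$, and those are correctly $t=0$ and $t^3=-8$; but you should either correct the denominator or simply say you are locating the zeros of $S$.

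Second, the passage ``$E_{-2}$ is a triangle degenerate only at the level of $J$'' is garbled --- $E_{-2}$ is a smooth cubic. Drop this, and rely instead on the clean argument you already sketch: two smooth plane cubics with the same $J$-invariant are abstractly isomorphic, and any two degree-$3$ line bundles on a genus-one curve are related by a translation, so the two plane embeddings are projectively equivalent. That one sentence replaces the hunt for an explicit change of coordinates, and is both rigorous and short. With these adjustments your proof is complete.
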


The $J$--invariant is a homogeneous rational function of $\bf a$ and it is well known (see \cite [Chapt. III, \S 25]{EnrChi2}), that there are two homogeneous polynomials $S({\bf a})$ and $T({\bf a})$, the former of degree 4 called the \emph{Aronhold invariant} (see \cite{Ott09} for a Pfaffian presentation), the latter of degree 6, such that 
\[
J=\frac {S^3}{T^2}. 
\]
This implies the well known fact that in a general pencil of cubics there are 12 curves with a fixed value of $J$ (in particular there are 12 singular cubic, to be counted with the appropriate multiplicity), except for the anharmonic curves, of which there are 4 each counted with multiplicity 3, and for the harmonic ones, of which there are 6 each to be counted twice. For special pencils of cubics the modulus can be constant: these pencils are classified in \cite{Chi} and it turns out that the singular curves in these pencils are not nodal.  

Going back to the syzygetic pencil, it may be defined as the the pencil {generated by a smooth cubic curve $F$ and by its Hessian}, as well as any pencil which is projectively equivalent to it.  It is well known that the intersection of $F$ and $\Hess(F)$ consists of 9 distinct points, which are the flexes of $F$. We note that, by the configuration of flexes of a cubic there are four trilaterals with the property that each of them contains all flexes of $F$ (see \cite [Vol. 2, p. 214] {EnrChi}). Each of these trilaterals sits in the syzygetic pencil, and the four of them account for the 12 singular cubics in the syzygetic pencil. 

Since the flexes of a plane curve are characterized by the property of being the smooth points  of the curve which are the intersections of the curve with its Hessian, if a cubic contains a line $L$, then the Hessian also contains $L$, because all the points of $L$ are flexes of the cubic. In particular, the Hessian of a trilateral is the same trilateral, what can be proved also with a direct computation, assuming, as we can, that the trilateral has equation $x_0x_1x_2=0$.

\begin{thm}[Hesse's Theorem, \cite {EnrChi}, p. 214]\label{thm:hesses} All curves of the { syzygetic pencil} are smooth at the nine base points of the pencil which are flexes for all of them.
\end{thm}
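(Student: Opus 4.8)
The plan is to work in the Hesse pencil and reduce everything to an explicit computation at the nine base points. Recall from the statements just proved that every smooth cubic is $\SL(3)$-equivalent to a member of the syzygetic (Hesse) pencil \eqref{eq:stanfinite}, so it suffices to prove the assertion for the standard pencil $\lambda(x_0^3+x_1^3+x_2^3) + \mu\, x_0x_1x_2 = 0$. The nine base points of this pencil are the nine inflection points common to all members; they are the classical Hesse configuration, e.g. $[0,1,-1]$, $[0,1,-\omega]$, $[0,1,-\omega^2]$ and their cyclic permutations, where $\omega$ is a primitive cube root of unity. The claim to be checked is that each member of the pencil is \emph{smooth} (nonsingular) at each of these nine points.

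First I would note that the base locus of a pencil of cubics is its intersection scheme, which here consists of nine reduced points (since $F$ meets $\Hess(F)$ transversally in the nine flexes, as recalled in the text). A standard fact is that a generic member of a pencil is smooth along the base locus; the content of the theorem is that \emph{every} member is. I would argue this as follows: if some member $C_0$ of the pencil were singular at a base point $p$, then because $p$ already lies on every member, the whole pencil would have $p$ as a point of multiplicity at least two "in the direction of $C_0$", and one checks that this forces the base scheme to be non-reduced at $p$ — contradicting that the nine flexes are distinct. More concretely and with less hand-waving: fix a base point $p$; choose the two generators $F = x_0^3+x_1^3+x_2^3$ and $G = x_0x_1x_2$ of the pencil; a member $\lambda F + \mu G$ is singular at $p$ iff $\lambda\, dF_p + \mu\, dG_p = 0$ as a covector. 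So it is enough to verify, at each of the nine flexes, that the two gradient covectors $dF_p$ and $dG_p$ are linearly independent in $(\CC^3)^\vee$ — equivalently that the $2\times 3$ Jacobian matrix of $(F,G)$ has rank $2$ at $p$. By the symmetry of the Hesse configuration under the Heisenberg group acting on the pencil, it suffices to check this at a single representative, say $p = [0,1,-1]$.

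At $p=[0,1,-1]$ one computes $dF_p = 3(0,\,1,\,1)$ (up to the scalar $3$, the entries are $x_0^2,x_1^2,x_2^2$ evaluated at $p$) and $dG_p = (x_1x_2,\,x_0x_2,\,x_0x_1)|_p = (-1,\,0,\,0)$. These two covectors are visibly linearly independent, so the Jacobian of $(F,G)$ has rank $2$ at $p$, hence no member $\lambda F + \mu G$ is singular at $p$. Transporting this by the Heisenberg symmetry (which permutes the nine base points and preserves the pencil) gives the same conclusion at all nine base points, and then $\SL(3)$-equivariance of the Hessian construction extends it to an arbitrary syzygetic pencil. This proves that every curve of the syzygetic pencil is smooth at the nine base points.

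The only genuine subtlety — really the one place one must be slightly careful — is the logical step "$p$ lies on every member and $C_0$ is singular at $p$ $\Rightarrow$ the base scheme is non-reduced at $p$," which is what justifies reducing to the rank-of-Jacobian check; once that reduction is in place the rest is the short explicit computation above together with the symmetry argument. Alternatively one can bypass that reduction entirely and simply invoke that the nine flexes are distinct points on the smooth cubic $F$, run the Jacobian computation for $(F,G)$ directly at one flex, and quote the transitivity of the Heisenberg action on the base points; I would present it this second way since it is self-contained and avoids any appeal to deformation-theoretic generalities.
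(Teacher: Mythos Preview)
The paper does not give its own proof of this theorem: it is stated with a citation to \cite{EnrChi}, p.~214, and used as a classical input. So there is no paper proof to compare against; the question is simply whether your argument is correct, and it is.

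Your reduction is the right one: a member $\lambda F+\mu G$ of the pencil is singular at a base point $p$ exactly when $\lambda\,dF_p+\mu\,dG_p=0$, so smoothness of every member at $p$ is equivalent to linear independence of $dF_p$ and $dG_p$. The explicit check at $p=[0,1,-1]$ is correct, and the transitivity of the automorphism group of the Hesse configuration on the nine base points carries it to all of them. (If you prefer not to invoke the Heisenberg group, the coordinate permutations together with $x_i\mapsto \omega^i x_i$ already act transitively and visibly preserve the pencil.) The passage about the base scheme being non-reduced is unnecessary, as you yourself note; the direct Jacobian computation is cleaner and self-contained.

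One small point: the theorem, as phrased, also asserts that the nine base points are \emph{flexes} of every member. You treat this as known. It does follow immediately once smoothness is established, since (as computed just before in the paper) the Hessian of any member of the Hesse pencil again lies in the pencil; hence each base point lies on both $C$ and $\Hess(C)$ and, being a smooth point of $C$, is a flex of $C$. It would be worth adding that one sentence so the proof matches the full statement.
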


We note that the {curves in the syzygetic pencil do not have constant modulus}, because the singular curves in {the pencil} are all nodal. Hence we can find some anharmonic cubic in { the syzygetic pencil}.  By Theorem \ref {thm: aron}, up to projective transformation we may assume that such an anharmonic cubic is the Fermat cubic, hence we conclude with the:

\begin{proposition}\label{prop:sssig} The syzygetic pencils are all projectively equivalent to  the pencil with equation
\begin{equation}\label{eq:stan}
x_0^3+x_1^3+x_2^3-3tx_0x_1x_2=0, \quad \text{with}\quad t\in \CC\cup \{\infty\}.
\end{equation}
\end{proposition}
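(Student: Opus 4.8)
The plan is to transport a single well-chosen member of the pencil into a normalized position and let it drag the whole pencil along. The normalized member will be the Fermat cubic, whose Hessian is the triangle $x_0x_1x_2$, so the transported pencil is forced to coincide with the Hesse pencil \eqref{eq:stan}. The mechanism that makes this work is that a \emph{smooth} member of a syzygetic pencil together with its own Hessian already spans the pencil, because all smooth members share the same nine flexes.

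Concretely, let $\mathcal{P}=\langle F,\Hess(F)\rangle$ be a syzygetic pencil, with $F$ a smooth cubic. Its base locus $F\cap\Hess(F)$ is, as recalled above, the set $\{p_1,\dots,p_9\}$ of the nine distinct flexes of $F$, and by the Cayley--Bacharach theorem $\mathcal{P}$ is the complete linear system of cubics through $p_1,\dots,p_9$. Now let $G\in\mathcal{P}$ be any smooth member. By Theorem \ref{thm:hesses} the points $p_1,\dots,p_9$ lie on $G$ and are flexes of it; since a smooth plane cubic has exactly nine flexes, these are all of them, so $\Hess(G)$ --- which meets $G$ precisely along the flex locus --- passes through $p_1,\dots,p_9$, whence $\Hess(G)\in\mathcal{P}$. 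Moreover $G\ne\Hess(G)$, since a smooth cubic has only finitely many flexes and therefore cannot coincide with its Hessian. Thus $\mathcal{P}=\langle G,\Hess(G)\rangle$ for every smooth $G\in\mathcal{P}$.

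Next, as noted just before the statement, $\mathcal{P}$ contains an anharmonic cubic $G$; being anharmonic it is smooth, because nodal cubics have $J=1$. By Aronhold's Theorem \ref{thm: aron} there is a projective transformation $g$ with $g(G)=\Phi$, the Fermat cubic $x_0^3+x_1^3+x_2^3=0$. Applying $g$ to the identity $\mathcal{P}=\langle G,\Hess(G)\rangle$ and using the projective covariance of the Hessian (already invoked in the proof of Theorem \ref{thm:im}), we obtain $g(\mathcal{P})=\langle \Phi,\Hess(\Phi)\rangle$; a one-line computation gives $\Hess(\Phi)=216\,x_0x_1x_2$, which is reducible and hence distinct from $\Phi$. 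Therefore $g(\mathcal{P})=\langle x_0^3+x_1^3+x_2^3,\ x_0x_1x_2\rangle$, which is precisely the pencil \eqref{eq:stan} up to the harmless reparametrization $\mu=-3t\lambda$. Since $\mathcal{P}$ was an arbitrary syzygetic pencil, this proves the proposition.

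The only step that is not purely formal is the identification of $\mathcal{P}$ with the system of all cubics through the nine common flexes, i.e. the Cayley--Bacharach input (equivalently, the fact that $\Hess(G)$ lands back in $\mathcal{P}$ for smooth $G$). Granting that, everything else is bookkeeping together with Aronhold's theorem, the equivariance $\Hess(g\cdot C)=g\cdot\Hess(C)$, and the trivial evaluation of the Hessian of the Fermat cubic.
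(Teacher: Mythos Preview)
Your proof is correct and follows essentially the same route as the paper: locate an anharmonic member of the pencil, transport it to the Fermat cubic via Aronhold's theorem, and observe that the whole pencil is carried to the Hesse pencil. You are simply more explicit than the paper about the step the authors leave implicit, namely that any smooth member $G$ together with $\Hess(G)$ spans $\mathcal{P}$ (which you justify via Hesse's theorem and Cayley--Bacharach), so that the transformation of the anharmonic curve drags the pencil to $\langle \Phi,\Hess(\Phi)\rangle$.
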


We saw in the proof of Theorem \ref{thm:im} that the Hessian map behaves on the pencil \eqref{eq:stan}
as the map \begin{equation}\label{eq:hessts}
t\in \CC\cup \{\infty\} \to s=\frac {4-t^3}{3t^2}\in \CC\cup \{\infty\},
\end{equation}
the extension to $t=\infty$ being immediate.

There are here 4 coincidences. Since these are cubics which coincide with their Hessian, all of their (smooth) points  are flexes, hence they are the 4 trilaterals contained in the pencil. 
One cleary corresponds to the value $t=\infty$, i.e., it is the trilateral $x_0x_1x_2=0$. The other three are obtained solving the equation
\[
t=\frac {4-t^3}{3t^2}
\]
hence they correspond to the three roots of unity $t=1, \epsilon=\exp \frac {2\pi i}3, \epsilon ^2$. These trilaterals are also Hessian of some other cubic in the pencil, which we now compute. First, for $s=\infty$, we find the equation $t^2=0$, hence we find the anharmonic cubic $x_0^3+x_1^3+x_2^3=0$ with multiplicity 2. For $s=1$, we find the equation $t^3+3t^2-4=0$, which has the obvious solution  $t=1$. Dividing by $t-1$ we get $t^2+4t+4=0$, hence again a double root $t=-2$. Similarly, for $t=\epsilon, \epsilon^2$, we find double roots $t=-2\epsilon, t=-2\epsilon^2$. It is easy to see that these four values are exactly the ones where the map  \eqref {eq:hessts} ramifies. Hence we have:

\begin{proposition}\label{prop:off} One has:\\
\begin{inparaenum}
\item [(i)] the four trilaterals in the syzygetic pencil are self Hessian and are also Hessian of only one other cubic of the pencil, each to be counted with multiplicity 2;\\
\item [(ii)] these are the four anharmonic cubics of the pencil;\\
\item [(iii)] the anharmonic cubics are characterized by the fact that their Hessian is a trilateral;\\
\item [(iv)] the Hessian of a smooth plane cubic which is not anharmonic is a smooth curve. 
\end{inparaenum}
\end{proposition}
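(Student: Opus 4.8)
The plan is to reduce everything, via Proposition~\ref{prop:sssig}, to the explicit syzygetic pencil \eqref{eq:stan}, on which the Hessian map induces the degree-$3$ self-map $s\colon t\mapsto \frac{4-t^{3}}{3t^{2}}$ of $\PP^{1}$ already exhibited in the proof of Theorem~\ref{thm:im} (with $s(\infty)=\infty$); write $C_{t}$ for the member of \eqref{eq:stan} with parameter $t$. Part (i) is a fibre analysis of $s$. The equation $s(t)=t$ holds exactly for $t$ with $t^{3}=1$ and for $t=\infty$, and these four values are precisely the four singular members of the pencil, which are the trilaterals; each is therefore self-Hessian (as is also clear from a trilateral being its own Hessian). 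For $t_{0}^{3}=1$ the equation $s(t)=t_{0}$ reads $t^{3}+3t_{0}t^{2}-4=0$, which factors as $(t-t_{0})(t+2t_{0})^{2}$; and $s^{-1}(\infty)$ consists of $t=0$ with multiplicity $2$ together with $t=\infty$. Hence each trilateral $C_{t_{0}}$ is the Hessian of precisely one further cubic of the pencil, namely $C_{-2t_{0}}$ (and the Fermat cubic $C_{0}$ when $t_{0}=\infty$), occurring in the fibre with multiplicity two. This proves (i).

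For (ii) one has to identify the anharmonic members of the pencil. Here $C_{0}$ is anharmonic by definition, being the Fermat cubic, and the anharmonic members are the zeros of the restriction to \eqref{eq:stan} of the Aronhold invariant $S$ (of degree $4$). Since the substitution $(x_{0},x_{1},x_{2})\mapsto(x_{0},x_{1},\zeta x_{2})$, $\zeta^{3}=1$, carries \eqref{eq:stan} to itself, sending $C_{t}$ to $C_{\zeta^{-1}t}$ and scaling $S$ by a fixed power of $\zeta$, the polynomial $S(C_{t})$ is a scalar multiple of $t(t^{3}+c)$ for some constant $c$ (this is also the content of the classical formula for the modulus of the Hesse pencil). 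The constant $c$ is pinned down by combining the implication ``anharmonic $\Rightarrow$ Hessian is a trilateral'' --- a consequence of Theorem~\ref{thm: aron} (every anharmonic cubic is projectively equivalent to the Fermat cubic, whose Hessian $216\,x_{0}x_{1}x_{2}$ is a trilateral) together with the $\SL(3)$-equivariance of the Hessian --- with the fibre computation of (i): the three zeros of $t(t^{3}+c)$ other than $0$ form a full set of cube roots and must map under $s$ into $\{1,\epsilon,\epsilon^{2},\infty\}$, so by (i) they are $\{-2,-2\epsilon,-2\epsilon^{2}\}$, the alternative $\{1,\epsilon,\epsilon^{2}\}$ being excluded since those $C_{t}$ are themselves trilaterals and a trilateral is not anharmonic ($S$ does not vanish on it --- equivalently, the coordinate trilateral $x_{0}x_{1}x_{2}$ does not lie on the Aronhold hypersurface $\Sec_{2}(V_{3,2})$). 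Thus $c=8$ and the anharmonic members of the pencil are exactly $C_{0},C_{-2},C_{-2\epsilon},C_{-2\epsilon^{2}}$, the four cubics produced in (i); this is (ii).

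Parts (iii) and (iv) then follow from the same dictionary. One direction of (iii) was used above; for the converse, a smooth cubic with a trilateral --- hence singular --- Hessian is, by the normal form recalled in the proof of Theorem~\ref{thm:im}, projectively equivalent to some $C_{t}$ with $t\notin\{1,\epsilon,\epsilon^{2}\}$ and $s(t)\in\{1,\epsilon,\epsilon^{2},\infty\}$, which by the factorisation in (i) forces $t\in\{0,-2,-2\epsilon,-2\epsilon^{2}\}$, an anharmonic value. For (iv), a smooth cubic that is not anharmonic is projectively equivalent to some $C_{t}$ with $t$ avoiding both $\{1,\epsilon,\epsilon^{2}\}$ and $\{0,-2,-2\epsilon,-2\epsilon^{2}\}$; then $s(t)\notin\{1,\epsilon,\epsilon^{2},\infty\}$, so its Hessian $C_{s(t)}$ is a smooth member of the pencil.

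The step I expect to be the main obstacle is the identification of the anharmonic locus of the pencil in (ii) --- the determination of the constant $c$; everything else is bookkeeping with the rational map $s$. I would settle it by the equivariance/weight argument above, which reduces the question to one coefficient and is then closed using that trilaterals are not anharmonic, or simply by quoting the classical modulus of the syzygetic pencil. One should also keep in mind that the four singular (trilateral) members are self-Hessian but not anharmonic --- they are not even smooth --- so that (iii) and (iv) are read for smooth cubics and there is no tension with a trilateral being its own Hessian.
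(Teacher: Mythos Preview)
Your argument is correct and follows the same route as the paper: reduce to the explicit Hesse pencil \eqref{eq:stan} and analyse the degree-$3$ self-map $s(t)=\frac{4-t^3}{3t^2}$. Parts (i), (iii), (iv) match the paper's proof essentially line by line.

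The only point of difference is (ii). The paper is very terse there: it observes that $C_0$ is anharmonic (Fermat), that anharmonic cubics are all projectively equivalent and hence all have trilateral Hessians, and then implicitly uses the earlier count (there are exactly four anharmonic members in a pencil, since $\deg S=4$) to conclude that the four cubics produced in (i) are precisely the anharmonic ones. Your treatment is more explicit: you use the $\mu_3$-equivariance of the pencil to force $S(C_t)$ into the shape $t(\beta t^3+\alpha)$, and then pin down the remaining constant by combining ``anharmonic $\Rightarrow$ trilateral Hessian'' with the fibre description in (i) and the fact that trilaterals lie off the Aronhold hypersurface. This is a perfectly good substitute for the paper's counting step; note only that the non-vanishing of the $t^4$-coefficient (i.e.\ $S(x_0x_1x_2)\neq 0$) is needed already to get the shape $t(t^3+c)$, so it is cleaner to invoke it up front rather than only at the exclusion step.
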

\begin{proof} The assertion (i) is clear. The  assertion (ii) is also clear for the curve $x_0^3+x_1^3+x_2^3=0$ corresponding to $t=0$. Then it follows in the other three cases because the anharmonic cubics are all projective, so the Hessian of any anharmonic cubic is a trilateral. Assertion (iii) is an immediate consequence of the above arguments. As for assertion (iv), it follows from the fact that the only singular curves in the syzygetic pencil are the four trilaterals.   \end{proof}

Next we prove the:

\begin{lemma}\label{lem:arm} An harmonic cubic is the Hessian of its Hessian.
\end{lemma}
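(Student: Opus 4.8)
The plan is to exploit the same syzygetic-pencil reduction that powered the proof of Theorem~\ref{thm:im} and Proposition~\ref{prop:off}. Since the Hessian map $h_{3,2}$ is $\SL(3,\CC)$-equivariant, and since every smooth plane cubic is projectively equivalent to a member of the Hesse pencil \eqref{eq:stan}, it suffices to verify the claim for a single harmonic representative of that pencil. Concretely, I would first identify which values of $t$ in \eqref{eq:stan} give harmonic cubics: by the relation $J=S^3/T^2$ and the classical description recalled above, the harmonic locus is $J=\infty$, i.e.\ $T(\mathbf a)=0$ with $S(\mathbf a)\neq 0$; restricting $T$ to the one-parameter family \eqref{eq:stan} yields a polynomial equation in $t$ whose roots are the harmonic members. (Alternatively one locates them via the condition that the Hessian map \eqref{eq:hessts} takes the harmonic value to the harmonic value with ramification, matching the classical fact that there are $6$ harmonic curves in a general pencil, each counted twice.)

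Next, having such a harmonic $F_t$, I would simply iterate \eqref{eq:hessts}: the Hessian of $F_t$ corresponds to $s=(4-t^3)/(3t^2)$, and the Hessian of \emph{that} corresponds to $u=(4-s^3)/(3s^2)$. The content of the lemma is the identity $u=t$ for the harmonic values of $t$, i.e.\ that the harmonic parameters are fixed points of the \emph{second} iterate of the degree-$3$ self-map $\varphi(t)=\dfrac{4-t^3}{3t^2}$ of $\PP^1$ but (by Proposition~\ref{prop:off}(iii), since a harmonic cubic is not anharmonic and hence not a trilateral) not fixed points of $\varphi$ itself. So the algebraic heart is: solve $\varphi(\varphi(t))=t$, factor out the three solutions $t=1,\epsilon,\epsilon^2$ of $\varphi(t)=t$ together with the base value $t=\infty$, and check that the remaining solutions coincide with the harmonic values found in the first step. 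This is exactly parallel to the anharmonic computation already carried out before Lemma~\ref{lem:arm}, where one found that the anharmonic cubic reproduces itself after two Hessian steps; here the same mechanism applies with ``trilateral/anharmonic'' replaced by ``harmonic'', using that both harmonic and anharmonic fibres of the multivalued inverse of $h_{3,2}$ are the loci where \eqref{eq:hessts} fails to have the generic $12$ preimages.

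I expect the main (though entirely routine) obstacle to be the bookkeeping in the fixed-point computation of $\varphi\circ\varphi$: clearing denominators turns $\varphi(\varphi(t))=t$ into a polynomial equation of degree $9$ in $t$, from which one must divide out the cubic factor corresponding to $\varphi(t)=t$ (roots $1,\epsilon,\epsilon^2$) and account for the behaviour at $t=\infty$, leaving a degree-$6$ factor; one then has to recognize its roots as precisely the six harmonic parameters, e.g.\ by checking they annihilate the restriction of the sextic invariant $T$ to the pencil while not annihilating $S$. A cleaner alternative, which I would present if the explicit factorization is messy, is modular: the Hesse pencil is (an open part of) a modular curve, $\varphi$ induces a $3$-isogeny on the associated elliptic curves, and on $J$-invariants the map \eqref{eq:hessts} is multivalued with the harmonic value $J=\infty$ as a totally ramified value swapped with another ramification value, so two applications return to $J=\infty$; the lemma then follows from uniqueness of the harmonic cubic up to projectivity (Theorem~\ref{thm: aron}'s harmonic analogue) together with $\SL(3,\CC)$-equivariance. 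Either way the proof is short once the harmonic value of $t$ is pinned down.
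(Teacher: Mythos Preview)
Your plan is valid in principle but takes a substantially longer route than the paper. The paper's proof is a one--line direct verification: since all harmonic cubics are projectively equivalent (by the $J$--invariant classification), it suffices to check the claim for a single explicit representative; the paper takes the Weierstrass form $f=x_1^3-x_2^2x_0-px_1x_0^2$ with $p\neq 0$, observes it is harmonic, and computes $\hess(\hess(f))=8^3\cdot 6^3\cdot p^2\cdot f$ by hand. No pencil, no fixed--point analysis, no invariant $T$.

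Your approach---locate the harmonic parameters in the Hesse pencil via $T=0$, then compute the degree--$9$ fixed--point equation of $\varphi\circ\varphi$, divide out $t^3-1$, and match the residual sextic against the harmonic locus---would work, and in fact the paper carries out exactly this computation \emph{after} Lemma~\ref{lem:arm}, using the lemma as input to prove Proposition~\ref{prop:harm} (the characterization of harmonic cubics as the only smooth ones with $\Hess(\Hess(F))=F$) and to identify the sextic $t^6-20t^3-8=0$. So you are effectively reversing the paper's logical order: doing two polynomial computations (the restriction of $T$ and the iterated map) and comparing them, instead of one. Your route buys the characterization for free; the paper's route is shorter if one only wants the lemma.

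Two minor cautions. First, your parenthetical about ``the loci where \eqref{eq:hessts} fails to have the generic $12$ preimages'' conflates the degree--$3$ self--map of the Hesse pencil with the count of special cubics in a \emph{general} pencil; drop it. Second, the modular alternative you sketch (``$\varphi$ induces a $3$--isogeny'') is not correct as stated and would need substantial work to make precise; I would not present it.
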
 

\begin{proof} The harmonic cubics are all projectively equivalent. Hence we can consider the cubic with equation
\[
f(x_0,x_1,x_2)=x_1^3-x_2^2x_0-px_1x_0^2=0, \quad \text{with}\quad p\neq 0.
\]
The reader will check that this curve is harmonic and that $\hess(\hess(f))=8^3\cdot 6^3\cdot p^2\cdot f$, proving the assertion. \end{proof}

Next we consider the composition of the map \eqref {eq:hessts} with itself, which is a $(9:1)$ map. Then it has 10 coincidences. Four of them are again the four trilaterals in the syzygetic pencil. The remaining six are the six harmonic cubics in the pencil which, by Lemma \ref {lem:arm} are Hessian of their Hessian. This proves that:

\begin{proposition}\label{prop:harm} The harmonic cubics are characterized by the fact of being the Hessian of their Hessian.
\end{proposition}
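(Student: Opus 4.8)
The plan is to reduce everything to the one-parameter family already under control, namely the syzygetic (Hesse) pencil \eqref{eq:stan}, and to use the $\SL(3,\CC)$--equivariance of the Hessian map together with Theorem~\ref{thm: aron} (Aronhold's theorem). First I would observe that, by Theorem~\ref{thm: aron}, every harmonic cubic is projectively equivalent to every other harmonic cubic, so it suffices to exhibit one harmonic cubic $C$ for which $\hess(\hess(C))$ is projectively equivalent to $C$ (indeed, by the already-established Lemma~\ref{lem:arm}, actually equal up to scalar), and conversely to show that no non-harmonic smooth cubic has this property. Since the construction $F\mapsto \Hess(\Hess(F))$ is $\SL(3,\CC)$--equivariant and the $J$--invariant is a complete projective invariant of smooth cubics, the condition ``$\Hess(\Hess(F))$ is projectively equivalent to $F$'' depends only on $J(F)$.

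Next I would place ourselves inside the syzygetic pencil. By Theorem~\ref{thm:im} every smooth plane cubic lies, after an $\SL(3)$--change of coordinates, in the pencil \eqref{eq:stan}, on which $h_{3,2}$ acts as the map \eqref{eq:hessts}, $t\mapsto s=\tfrac{4-t^3}{3t^2}$. Composing \eqref{eq:hessts} with itself gives a rational self-map of $\PP^1$ of degree $9$ (a $9{:}1$ map), so it has exactly $10$ coincidences counted with multiplicity, as noted just before the statement. I would then account for these $10$ coincidences: four of them are the four trilaterals $t\in\{\infty,1,\epsilon,\epsilon^2\}$ in the pencil (these are fixed already by the single map \eqref{eq:hessts}, hence a fortiori by its square, and one checks they contribute their expected multiplicities); the remaining six must be the six harmonic cubics in the pencil. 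That the harmonic members of \eqref{eq:stan} are among the new coincidences follows from Lemma~\ref{lem:arm}, which says each harmonic cubic is the Hessian of its Hessian, i.e. a genuine fixed point of the composition. Counting shows there is no room for any further coincidence, so the fixed locus of the square of \eqref{eq:hessts} consists precisely of the four trilaterals together with the six harmonic cubics.

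From this I conclude as follows. If $F$ is a smooth cubic with $\Hess(\Hess(F))$ projectively equivalent to $F$, then putting $F$ in the pencil \eqref{eq:stan} we get a value of $t$ fixed (up to the $\PGL$-action on moduli, but here actually fixed as a point of the $t$-line by $\SL(3)$-equivariance applied to the pencil itself) by the composed map; by the coincidence count this $t$ corresponds either to a trilateral (excluded, as $F$ is smooth) or to one of the six harmonic cubics. Hence $F$ is harmonic. The converse is exactly Lemma~\ref{lem:arm}. This gives the characterization.

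The main obstacle I anticipate is the bookkeeping of multiplicities in the $10$ coincidences of the $9{:}1$ self-map \eqref{eq:hessts}: one must verify that the four trilaterals, which are fixed by \eqref{eq:hessts} itself, absorb exactly four of the ten coincidences of the iterate (and not more, e.g.\ with higher multiplicity as parabolic fixed points of the iterate), so that precisely six remain for the harmonic cubics, and that the six harmonic values $t$ (roots of $\mu^2+3\lambda^2$-type equations, as in the degree-$1$ analogue of Remark~\ref{rem:syz}) are simple and genuinely distinct from the trilateral values. This is a direct but slightly delicate calculation with the explicit map $t\mapsto\tfrac{4-t^3}{3t^2}$ and its derivative at the fixed points; once it is done, the equivariance argument makes the passage from ``inside the pencil'' to ``all smooth cubics'' immediate.
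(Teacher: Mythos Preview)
Your approach is essentially the same as the paper's: reduce to the syzygetic pencil, compose the map \eqref{eq:hessts} with itself to obtain a $9{:}1$ self-map of $\PP^1$ with $10$ coincidences, account for four of them as the trilaterals, and identify the remaining six as the harmonic members via Lemma~\ref{lem:arm}. The paper's argument is terser and does not explicitly address the multiplicity bookkeeping you flag as the main obstacle; it simply asserts the split $10=4+6$ without further comment (and afterwards writes down the explicit degree-six factor $t^6-20t^3-8=0$, confirming the count).

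One small slip to fix: Theorem~\ref{thm: aron} (Aronhold) concerns \emph{anharmonic} cubics, not harmonic ones. The fact that all harmonic cubics are projectively equivalent follows instead from the $J$--invariant being a complete projective invariant of smooth cubics, which you do invoke correctly a few lines later.
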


The six harmonic cubics in the syzygetic pencil can be explicitely computed. Indeed, one is led to solve the equation
\[
t=\frac {4-\Big (\frac {4-t^3}{3t^2}\Big)^3}{3 \Big ( \frac {4-t^3} {3t^2} \Big)^2}
\]
which has degree 9. Dividing by $t^3-1$ (corresponding to the three trilaterals different from { $x_0x_1x_2=0$} which corresponds to $t=\infty$), we find the degree 6 equation
\[
t^6-20t^3-8=0
\]
whose solutions correspond to the harmonic cubics in the pencil. 

Finally we have the following Lemma (compare with the rough classification in \cite[Table 2]{Banchi}):

\begin{lemma}\label{lem:casi} One has:\\
\begin{inparaenum}
\item [(i)] the Hessian of a smooth cubic is a smooth cubic, except in the anharmonic case in which it is a trilateral;\\
\item [(ii)] the Hessian of a cubic with a node is a cubic with the same node and the same tangent lines at the node;\\
\item [(iii)] the Hessian of a cubic with a cusp is the cuspidal tangent counted with multiplicity 2 plus the line joining the cusp with the only flex of the curve;\\
\item [(iv)] the Hessian of a cubic reducible in a conic $\Gamma$ plus a line $L$ which is not tangent to $\Gamma$,  consists of $L$ plus a conic which is tangent to $\Gamma$ at the points where $L$ intersects $\Gamma$;\\
\item [(v)] the Hessian of a cubic reducible in a conic $\Gamma$ plus a line $L$ which is  tangent to $\Gamma$,  consists of $L$ counted with multiplicity 3;\\
\item [(vi)] the Hessian of a trilateral is the same trilateral;\\
\item [(vii)] the Hessian is undetermined for the cones (i.e., triple of lines concurrent at a point).
\end{inparaenum}
\end{lemma}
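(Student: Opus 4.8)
The plan is to treat Lemma \ref{lem:casi} as a case-by-case verification, using $\SL(3,\CC)$-equivariance to reduce each case to a single normal form and then compute the Hessian explicitly. Recall that $h_{3,2}$ is $\SL(3)$-equivariant, so it suffices to pick one representative in each projective orbit; in several cases this normal form is already supplied by the earlier results of this section. I would organize the proof so that the easy items come first and the items requiring genuine computation come last.

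First I would dispatch items (i), (vi), and (vii): item (i) is exactly the content of Proposition \ref{prop:off}(iv) together with Proposition \ref{prop:off}(iii) (smooth non-anharmonic cubics have smooth Hessian, anharmonic cubics have a trilateral as Hessian); item (vi) was already proved by direct computation on $x_0x_1x_2=0$ in the discussion preceding Theorem \ref{thm:hesses}; and item (vii) is the definition of the indeterminacy locus --- a cone has vanishing Hessian, so $h_{3,2}$ is undefined there, and by Lemma \ref{lem:triv} these are exactly the triples of concurrent lines, the points of $\Sec(V_{3,2})=\C_{3,2}$ not already covered by the reducible or reduced cases. For item (iv) I would take the normal form $\Gamma = \{x_0x_2 - x_1^2 = 0\}$ and $L = \{x_0 = 0\}$ (a general secant line), so $f = x_0(x_0x_2-x_1^2)$, and compute $\hess(f)$ directly; the computation shows $\hess(f)$ factors as $x_0$ times a conic, and one checks that this conic meets $\Gamma$ doubly at the two points $L\cap\Gamma$ (the two points $[0,0,1]$ and $[1,0,0]$ if one adjusts coordinates, or more symmetrically via the tangency condition). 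For item (v), the normal form is $\Gamma = \{x_0x_2 - x_1^2=0\}$, $L = \{x_2=0\}$ (the tangent line at $[1,0,0]$), so $f = x_2(x_0x_2-x_1^2)$; a short computation gives $\hess(f) = c\, x_2^3$ for a nonzero constant $c$.

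For items (ii) and (iii) --- the nodal and cuspidal cubics --- I would use the standard normal forms $f = x_0x_1x_2 - x_1^3 - x_2^3$ type equations, or more conveniently the affine Weierstrass forms: a nodal cubic is projectively equivalent to $x_0x_2^2 = x_1^2(x_1+x_0)$ (node at $[1,0,0]$ with distinct tangents) and a cuspidal cubic to $x_0x_2^2 = x_1^3$ (cusp at $[1,0,0]$). In the nodal case I would compute $\hess(f)$, verify it is a cubic (not identically zero --- the curve is not a cone), verify it vanishes at the node $[1,0,0]$, and compare the quadratic terms of $f$ and $\hess(f)$ at the node to see the tangent cone is preserved up to scalar. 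In the cuspidal case I would compute $\hess(f)$ and check it factors as (cuspidal tangent)$^2$ times (a line), then identify that line as the one through the cusp and the unique flex $[0,0,1]$ of the cuspidal cubic.

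The main obstacle is purely computational bookkeeping: correctly forming the $3\times 3$ Hessian determinant for each normal form and factoring the resulting cubic, especially in the reducible and cuspidal cases where one must not only factor but also verify the claimed incidence/tangency geometry (that the Hessian conic in (iv) is tangent to $\Gamma$ at the right points, that the extra line in (iii) passes through cusp and flex). There is no conceptual difficulty --- equivariance reduces everything to finitely many explicit polynomial identities --- but care is needed to choose normal forms that make the factorizations transparent and to check genericity hypotheses (e.g.\ in (iv) that $L$ is not tangent to $\Gamma$, so that the two points of $L\cap\Gamma$ are distinct). I would relegate the most tedious determinant expansions to ``a direct computation left to the reader,'' as is done repeatedly elsewhere in this section, and spell out only the factored forms and the geometric conclusions.
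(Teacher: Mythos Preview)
Your proposal is correct and matches the paper's own proof almost exactly: the paper also invokes Proposition~\ref{prop:off} for (i) and then says the remaining items ``can be proved with explicit computations which can be left to the reader (take into account that nodal cubics and cuspidal cubics are all projectively equivalent).'' You have simply fleshed out those computations by naming convenient normal forms and indicating what has to be checked in each case, which is more than the paper itself provides.
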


\begin{proof} Part (i) is Proposition \ref {prop:off}. The rest of the assertion can be proved with explicit computations which can be left to the reader (take into account that nodal cubics and cuspidal cubics are all projectively equivalent).  
\end{proof}

In conclusion, we have the:

\begin{proposition}\label{prop:in} The following cubics are the only ones which do not appear as Hessian of some other cubic:\\
\begin{inparaenum}
\item [(i)] cubics with a cusp;\\
\item [(ii)] cubics reducible in a conic plus a line  tangent to the conic;\\
\item [(iii)] cubics reducible in three distinct lines passing through a point.
\end{inparaenum}
\end{proposition}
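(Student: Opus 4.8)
The plan is to show that the three families listed in Proposition~\ref{prop:in} are precisely the cubics that never arise as a Hessian, by combining the case analysis of Lemma~\ref{lem:casi} with the earlier structural results. First I would go through Lemma~\ref{lem:casi} line by line and record which cubics \emph{do} appear as Hessians: every smooth cubic with finite nonzero modulus is the Hessian of another smooth cubic in its syzygetic pencil (Proposition~\ref{prop:off}(iv) together with the surjectivity of the map \eqref{eq:hessts}); the anharmonic smooth cubics are trilaterals' Hessians... wait, rather the trilaterals are their Hessians, so I instead note that anharmonic cubics arise as Hessians of the cubics with $t=-2\epsilon^k$ by Proposition~\ref{prop:off}(i); the harmonic cubics are Hessians of their own Hessians by Lemma~\ref{lem:arm}; a trilateral is its own Hessian by Lemma~\ref{lem:casi}(vi); a nodal cubic is the Hessian of... here I would use Lemma~\ref{lem:casi}(ii), which says the Hessian of a nodal cubic is again nodal with the same node and tangents, so I need the converse — that every nodal cubic is realized; and a cubic of type conic-plus-non-tangent-line arises from Lemma~\ref{lem:casi}(iv) applied in reverse, which again needs a realization argument.

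The key technical step is therefore the \emph{surjectivity onto the nodal locus and onto the conic-plus-transversal-line locus}. For the nodal case: all nodal cubics are projectively equivalent, so it suffices to exhibit one nodal cubic that is a Hessian; one natural way is to take a degenerating family inside a syzygetic pencil and invoke the fact, recalled in the discussion before Theorem~\ref{thm:hesses}, that the $12$ singular members of a general pencil of cubics are nodal, hence the map \eqref{eq:hessts} (whose image is all of $\CC\cup\{\infty\}$, so in particular hits the $J=1$ value) produces a nodal cubic as a Hessian; alternatively, and more cleanly, one checks by a direct computation that the Hessian of a suitable nodal cubic is again nodal and then uses $\mathrm{PGL}$-homogeneity to conclude every nodal cubic occurs. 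For the conic-plus-transversal-line case: such cubics have $J$-invariant $1$ as well (the line-conic configuration degenerates from a nodal cubic), and one can either produce one explicitly as $\hess$ of a cubic via Lemma~\ref{lem:casi}(iv) read backwards — pick a conic $\Gamma'$ and a transversal line $L$, and solve for a cubic $\Gamma\cup L$ with $\Gamma$ tangent to $\Gamma'$ at $\Gamma'\cap L$ — or argue by a dimension/closure argument that this $8$-dimensional family lies in the closure of the image.

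Then I would verify the three \emph{non-}realizations. For cusps: Lemma~\ref{lem:casi}(iii) tells us the Hessian of a cuspidal cubic is (cuspidal tangent with multiplicity $2$) plus a line, which is \emph{not} a cuspidal cubic; so a cuspidal cubic could only be the Hessian of a cubic of some other type, and scanning parts (i)--(vii) of Lemma~\ref{lem:casi} shows no other type has a cuspidal Hessian — the Hessians are, respectively, smooth cubics, trilaterals, nodal cubics, conic-plus-tangent-or-transversal-line configurations, triple lines, and undetermined. For conic-plus-tangent-line: by Lemma~\ref{lem:casi}(v) the only cubic whose Hessian \emph{could} be of this shape would have Hessian equal to a triple line, but part (v) itself shows such a cubic has Hessian a triple line, not a conic-plus-tangent-line; again a scan of (i)--(vii) rules out every other source. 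For three concurrent lines (a cone): by Hesse's Theorem~\ref{thm:hesse} these are exactly the indeterminacy points of $h_{3,2}$, and more to the point, no cubic has a cone as Hessian since a cone is singular along a whole line through a point and none of the output types in Lemma~\ref{lem:casi} is of that form.

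The main obstacle I expect is the surjectivity onto the nodal and conic-plus-transversal-line strata: Lemma~\ref{lem:casi} as stated only computes Hessians \emph{of} these degenerate types and does not by itself guarantee they are \emph{hit}, so I will need either an explicit inverse computation or a clean argument that the image of $h_{3,2}$ — which is a $9$-dimensional irreducible subvariety of $\PP^9$, hence (being $3:1$ onto its image by Theorem~\ref{thm:im}) dense — contains these codimension-$\leq 1$ strata; the cleanest route is probably to note that the image is closed under the $\mathrm{PGL}(3)$-action and of full dimension, combined with producing one explicit nodal and one explicit conic-plus-line Hessian. Everything else is bookkeeping over the seven cases of Lemma~\ref{lem:casi}.
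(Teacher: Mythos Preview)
Your plan is exactly the paper's implicit argument: Proposition~\ref{prop:in} is stated in the paper with no proof, as an immediate consequence of the case analysis in Lemma~\ref{lem:casi}. Your identification of the one nontrivial point---surjectivity onto the nodal locus and the conic-plus-transversal-line locus---is well taken, and your second suggested fix (one explicit computation plus $\mathrm{PGL}(3,\CC)$-homogeneity, using that each of these is a single orbit) is the clean way to do it.

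A few slips to correct. First, the syzygetic pencil contains \emph{no} nodal cubics: its only singular members are the four trilaterals (each counted three times toward the twelve), so your first proposed route to nodal surjectivity via the map \eqref{eq:hessts} does not work; stick with the direct computation. Second, your realization of the anharmonic cubics is backwards: the cubics at $t=0,-2,-2\epsilon,-2\epsilon^2$ \emph{are} the anharmonic ones, and their Hessians are trilaterals (Proposition~\ref{prop:off}(iii)), not anharmonic cubics. The clean statement is simply that \eqref{eq:hessts} is a surjective degree-$3$ self-map of $\PP^1$, so every smooth cubic (whatever its modulus) appears as a Hessian. Third, three \emph{distinct} concurrent lines are singular only at the vertex, not ``along a whole line''; the exclusion of this type still follows from the scan of Lemma~\ref{lem:casi}, since the only concurrent-line outputs there are $2L+M$ (from cuspidal cubics, case (iii)) and $3L$ (from case (v)), never three distinct lines. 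Finally, in your scan you list ``conic-plus-tangent-or-transversal-line'' among the outputs, but conic-plus-tangent-line never occurs as a Hessian---only the transversal case does, via (iv). With these corrections the argument goes through.
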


On the other hand, the cubics listed in Proposition \ref {prop:in} do appear in the image of the resolution of the indeterminacies of $h_{3,2}$. It would be nice to understand in details how this works. 

\begin{remark}\label{rem:aron} The equation $S=0$, $S$ being the Aronhold invariant, defines in $\Sigma(2,3)$ a hypersurface which is the Aronhold hypersurface $\Sec_2(V_{3,2})$. This is singular along $\Sec (V_{3,2})$. Hence, the polar map of the Aronhold hypersurface
\[
{\rm ar}: \Sigma(2,3)\dasharrow \Sigma(2,3)
\]
is also defined by a 9--dimensional $\SL(3)$-invariant linear system of cubics containing $\Sec(V_{3,2})$, like the Hessian map. We call it the \emph{Aronhold map}. It is remarkable that this has also degree 3 but it is different from the Hessian map. The action of it on the syzygetic pencil \eqref {eq:stan}, sends the curve corresponding to the parameter $t$ to the curve corresponding to the parameter $\frac {2+t^3}{3t}$. We verified this using Macaulay2. The composition ${\rm ar}^2$ has exactly the same fixed points found in Proposition \ref{prop:harm}, namely the four trilaterals and the six harmonic cubics.
\end{remark}

In conclusion, we want to remark that $H^0(\PP^9, \mathcal I_{\Sec(V_{3,2}),\PP^9}(3))$ is a $20$--dimensional representation of $\SL(3,\CC)$, which splits in the sum of two $10$--dimensional representations, one, isomorphic to $\Sym^3(V)$, is the vector space of cubics corresponding to the linear system $\mathcal H_{3,2}$, the other is the $10$--dimensional vector space of polars of the Aronhold invariant $S$. It is interesting to notice that:

\begin{proposition}\label{prop:sceme}
The polars of the  Aronhold invariant cut out the cone locus schematically.
\end{proposition}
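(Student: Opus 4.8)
The plan is to show that the scheme cut out by the ten first partial derivatives $\partial S/\partial a_{ijk}$ coincides with the cone locus $\C_{3,2}=\Sec(V_{3,2})$ with its reduced structure. Since both the Aronhold hypersurface $\{S=0\}$ and its polar system are $\SL(3,\CC)$--invariant, and since the $\SL(3)$--orbit of the Fermat-type point $x_0^3+x_1^3$ is dense in $\C_{3,2}$ (as used in Remark \ref{rem:gol}), it suffices by equivariance to verify the statement at this single point $p$. First I would record that the polar system is contained in $H^0(\mathcal I_{\Sec(V_{3,2})}(3))$, so the polars do vanish along $\C_{3,2}$ set-theoretically; the content of the proposition is that they vanish to order exactly one, i.e., that the intersection of the polar quadrics (the common zero locus of the derivatives of the linear forms $\sum a_{ijk}\frac{\partial S}{\partial a_{ijk}}$) at $p$ is precisely the embedded tangent space $T_{p}\C_{3,2}$, a $\PP^5\subset\PP^9$.

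The key computational step is then to evaluate, at the point $p=[x_0^3+x_1^3]$, the Hessian matrix of $S$ as a quadratic form in the ten coordinates $a_{ijk}$, and to show it has rank exactly $10-5=5$ with kernel equal to the affine cone over $T_p\C_{3,2}$. Concretely, one writes a general nearby cubic as $x_0^3+x_1^3+\sum \varepsilon_{ijk}x_0^ix_1^jx_2^k$, plugs into Ottaviani's Pfaffian presentation of $S$ (or into any explicit degree-$4$ formula for the Aronhold invariant, e.g.\ via the catalecticant/hyperdeterminant expression), and extracts the degree-$2$ part in the $\varepsilon$'s; since $p$ lies on $\{S=0\}$ and is a singular point of that hypersurface (as $\{S=0\}$ is singular along $\Sec(V_{3,2})\ni p$), the linear part vanishes and the leading term is this quadratic form. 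One then computes its rank. The tangent space $T_p\C_{3,2}$ to the secant variety at $p$ is, by Terracini's lemma, spanned by the tangent planes to $V_{3,2}$ at $[x_0^3]$ and $[x_1^3]$, namely $\langle x_0^3, x_0^2x_1, x_0^2x_2, x_1^3, x_1^2x_0, x_1^2x_2\rangle$; its affine cone is a $6$-dimensional subspace of the $10$-dimensional $\Sym^3(V)$, hence codimension $5$ projectively it is a $\PP^5$. So one must check that the Hessian quadric of $S$ at $p$ has this exact $6$-dimensional kernel, equivalently rank $4$ as a form on the $10$-dimensional space modulo — wait, more precisely rank $10-6=4$? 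No: the projectivized intersection of the polar quadrics being $T_p\C_{3,2}=\PP^5$ means the linear span of the linear forms $\frac{\partial S}{\partial a_{ijk}}$ has the same zero locus; since $\C_{3,2}$ has codimension $4$ in $\PP^9$, but its tangent space is a $\PP^5$, this is consistent — the Jacobian of the ten cubics $\partial S/\partial a$ has rank exactly $4$ at $p$ (the number of independent linear forms defining the $\PP^5$). So the real claim to verify is: the $10\times 10$ Jacobian matrix $\bigl(\partial^2 S/\partial a_{ijk}\partial a_{klm}\bigr)$ evaluated at $p$ has rank exactly $4$ and its row/column span cuts out precisely $T_p\C_{3,2}$.

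I expect the main obstacle to be purely the size and symmetry-bookkeeping of this $10\times 10$ Hessian computation: one needs a clean enough closed form for $S$ (the Pfaffian form from \cite{Ott09} is probably best, since it keeps the $\SL(3)$-structure visible) to make the rank-$4$ verification tractable by hand, or one simply delegates it to a \textsf{Macaulay2} computation as was done for $\GN_{3,2}$ in Remark \ref{rem:gol}. A cleaner conceptual alternative, which I would try first, is to use the fact that $\{S=0\}$ has $\Sec(V_{3,2})$ as its singular locus and that along $V_{3,2}\subset\Sec(V_{3,2})$ the tangent cone to $\{S=0\}$ is the non-degenerate cone described in \cite[Thm.~3.1]{CilRu} (exactly as invoked in the proof of Proposition \ref{prop:ttt}): a non-degenerate tangent cone forces the polar (second-order) data to be non-degenerate transversally, which pins down the scheme structure to be reduced. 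Either way, once the rank is confirmed to be $4$ with the correct kernel at one point of the dense orbit, $\SL(3)$-equivariance propagates it to all of $\C_{3,2}$ and the proposition follows.
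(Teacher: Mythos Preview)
Your main line is exactly what the paper does: it evaluates the $10\times10$ Hessian of $S$ at $x_0^3+x_1^3$, displays the matrix explicitly, and observes that its rank is $4$, the codimension of $\C_{3,2}$; by $\SL(3,\CC)$--equivariance this settles the claim along the dense orbit. Your self-correction to rank $4$ (not $5$) is right: the affine cone over the $5$--dimensional projective variety $\C_{3,2}$ is $6$--dimensional, so the Jacobian of the ten polars must have rank $10-6=4$.

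One caveat on your proposed ``cleaner conceptual alternative'': the tangent-cone description from \cite[Thm.~3.1]{CilRu} that was invoked in Proposition~\ref{prop:ttt} concerns points of $V_{3,2}$, not general points of $\Sec(V_{3,2})$; the test point $x_0^3+x_1^3$ lies on the secant variety but off the Veronese, so that argument would not apply as stated. The direct Hessian computation is the route that actually goes through.
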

\begin{proof}
The jacobian of the polar map of the Aronhold invariant coincides with the $10\times 10$ Hessian matrix of the Aronhold invariant. 
Evaluating at $x_0^3+x_1^3$, one finds
\[\begin{pmatrix}
      0&0&0&0&0&0&0&0&0&0\\
      0&0&0&0&0&0&0&0&0&0\\
      0&0&0&0&0&0&0&0&0&0\\
      0&0&0&0&0&0&0&0&0&0\\
      0&0&0&0&0&0&0&0&0&{-216}\\
      0&0&0&0&0&0&0&0&144&0\\
      0&0&0&0&0&0&0&0&0&0\\
      0&0&0&0&0&0&0&0&0&0\\
      0&0&0&0&0&144&0&0&0&0\\
      0&0&0&0&{-216}&0&0&0&0&0\end{pmatrix}\]
which has rank $4$, equal to the codimension of the cone locus. \end{proof}

\section{Hypersurfaces of rank $r+2$}\label{sec:r+2}

Recall that the \emph{Waring rank}, or simply the \emph{rank}, of a polynomial $f\in \Sym^d(V)$, or of the polynomial class $[f]\in \Sigma(d,r)$, is the minimum integer $h$ such that $[f]$ sits on a linear space of dimension $h-1$ which is $h$--secant to the Veronese variety $V_{d,r}\subset \Sigma(d,r)$. This is the same as saying that $h$ is the minimum such that $f$ can be written as
\[
f=l_1^d+\cdots+l_h^d,
\]
with $l_1,\ldots, l_h$ non--proportional linear forms. 

In this section we study the Hessian of polynomials $f$ of rank $r+2$, with $[f]\in \Sigma(d,r)$. First we compute the Hessian of a polynomial of rank $r+2$.

\begin{proposition}\label{prop:expr} Let $f=\sum_{i=0}^{r+1}c_i l_i^d$, where $l_0,\ldots,l_{r+1}$ are linear forms. Then, up to a scalar, one has
 \[\hess(f)=\sum_{i=0}^{r+1}\prod_{j\neq i}c_jl_j^{d-2}.\] 

\end{proposition}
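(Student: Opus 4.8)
The plan is to write the Hessian matrix of $f$ in the form $d(d-1)\,B\,D\,B^{T}$ and to expand the determinant by Cauchy--Binet. Concretely, writing $l_i=\sum_{p=0}^{r}\lambda_{ip}x_p$ and using $\partial^{2}(l_i^{d})/\partial x_p\partial x_q=d(d-1)\,l_i^{d-2}\lambda_{ip}\lambda_{iq}$, the Hessian matrix of $f=\sum_{i=0}^{r+1}c_il_i^{d}$ equals $d(d-1)\,BDB^{T}$, where $B=(\lambda_{ip})$ is the $(r+1)\times(r+2)$ matrix whose columns are the gradients $\nabla l_0,\dots,\nabla l_{r+1}$, and $D=\operatorname{diag}(c_0l_0^{d-2},\dots,c_{r+1}l_{r+1}^{d-2})$ (the hypothesis $d\ge 3$ makes sense of the powers $l_i^{d-2}$).

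Next I would apply the Cauchy--Binet formula $\det(BDB^{T})=\sum_S(\prod_{i\in S}D_{ii})(\det B_S)^2$, the sum running over the $(r+1)$-element subsets $S$ of the column index set $\{0,\dots,r+1\}$. Since $B$ has exactly $r+2$ columns, these subsets are the complements of singletons, so, writing $B^{(i)}$ for the square matrix obtained from $B$ by deleting column $i$,
\[
\hess(f)=\bigl(d(d-1)\bigr)^{r+1}\sum_{i=0}^{r+1}\bigl(\det B^{(i)}\bigr)^{2}\prod_{j\ne i}c_j\,l_j^{d-2}.
\]
It then remains to control the minors $\det B^{(i)}$. The $r+2$ gradient vectors lie in the $(r+1)$-dimensional space $V$, hence satisfy a unique-up-to-scalar syzygy $\sum_i a_i\nabla l_i=0$, equivalently $\sum_i a_i l_i=0$; by Cramer's rule the vector $((-1)^i\det B^{(i)})_i$ spans $\ker B$, so $\det B^{(i)}=(-1)^i\lambda\, a_i$ with $\lambda$ independent of $i$, and $(\det B^{(i)})^2=\lambda^2 a_i^2$.

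Finally, assuming the $l_i$ are in general position (any $r+1$ of them independent — the generic case and the one of interest; the general case follows by Zariski density, both sides being polynomial in the $\lambda_{ip}$ and the $c_i$), all $a_i$ are nonzero, so one may rescale: replacing $l_i$ by $a_il_i$ and $c_i$ by $c_ia_i^{-d}$ leaves $f$ unchanged but normalizes the syzygy to $\sum_i l_i=0$, making every $(\det B^{(i)})^2$ one and the same constant. Pulling it out of the sum yields $\hess(f)=(\mathrm{scalar})\cdot\sum_{i=0}^{r+1}\prod_{j\ne i}c_j\,l_j^{d-2}$, as claimed.

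There is no genuine obstacle here — the proof is essentially an organized application of Cauchy--Binet — but the normalization step is the point deserving care: the displayed identity holds verbatim (up to a single scalar) only after the $l_i$ are rescaled so that $\sum_i l_i=0$, while for an arbitrary presentation one instead gets $\hess(f)=(d(d-1))^{r+1}\lambda^2\sum_i a_i^2\prod_{j\ne i}c_j l_j^{d-2}$; it is the collapse of the per-term weights $a_i^2$ into one overall constant that the phrase ``up to a scalar'' encodes. (When $r+1$ of the $l_i$ are dependent one is outside the general-position locus, which is why one first proves the identity generically and then invokes density.)
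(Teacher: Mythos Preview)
Your proof is correct and takes a genuinely different route from the paper's own argument. The paper proceeds geometrically: it views the rank $r+2$ form as the hyperplane section $H\cap\Phi$ in $\PP^{r+1}$ (coordinates $l_0,\dots,l_{r+1}$, hyperplane $H:\sum l_i=0$, diagonal hypersurface $\Phi:\sum c_il_i^{d}=0$), then characterizes the Hessian of this section as the locus of points $p\in H$ whose polar quadric $\sum c_ip_i^{d-2}l_i^{2}=0$ is tangent to $H$, i.e.\ whose dual quadric passes through $[1,\dots,1]$; clearing denominators in $\sum 1/(c_ip_i^{d-2})=0$ gives the formula.

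Your Cauchy--Binet argument is more elementary and more informative: it produces the exact identity $\hess(f)=(d(d-1))^{r+1}\sum_i(\det B^{(i)})^{2}\prod_{j\ne i}c_jl_j^{d-2}$ before any normalization, and it makes transparent exactly what the paper's phrase ``up to a scalar'' is hiding (namely the tacit rescaling to $\sum_i l_i=0$, which forces all $(\det B^{(i)})^{2}$ to coincide). The paper's approach, on the other hand, explains conceptually \emph{why} such a clean formula should exist---it is the dual-quadric condition for a diagonal quadric---and connects the Hessian locus to the classical polar geometry of hyperplane sections. Both proofs begin by reducing to the generic case via a polynomial-identity/Zariski-density remark, so on that point you are aligned with the paper.
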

\begin{proof} We have to prove a polynomial identity. So it is sufficient to assume that the forms $l_0,\ldots,l_{r+1}$ are  general under the condition $\sum_{i=0}^{r+1} {l_i}=0$ and $c_i\neq 0$ for $i=0,\ldots,r+1$. 

We consider in $\PP^{r+1}$, with homogeneous coordinates $[l_0,\ldots, l_{r+1}]$,  the hypersurface $\Phi$ defined by the equation $\sum_{i=0}^{r+1} c_i l_i^d=0$ and the hyperplane $H$ with equation $\sum_{i=0}^{r+1} l_i=0$.
The Hessian of the intersection hypersurface of $H$ with $\Phi$ is the locus of points $p=[p_0,\ldots, p_{r+1}]\in H$ such that the polar quadric 
$\sum_{i=0}^{r+1} c_i p_i^{d-2} l_i^{2}=0$ to $\Phi$ with respect to $p$ is tangent to $H$. That is the dual quadric $\sum_{i=0}^{r+1} \frac{1}{c_i p_i^{d-2}}l_i^{2}=0$ contains the point $[1,\ldots, 1]$,
namely $\sum_{i=0}^{r+1} \frac{1}{c_i p_i^{d-2}}=0$. Getting rid of the denominators, we find the desired equation.
\end{proof}

As a consequence we have that a polynomial of rank $r+2$ can be recovered from its Hessian. This is an immediate consequence of the following:

\begin{proposition}\label{prop:ciror} Let $f=\sum_{i=0}^{r+1}c_i l_i^d$, $g=\sum_{i=0}^{r+1}b_i l_i^d$, where $l_0,\ldots,l_{r+1}$ are linear forms such that any $r+1$ of them are linearly independent and such that $\prod_{i=0}^{r+1}c_i$ and $\prod_{i=0}^{r+1}b_i$ are both nonzero. Suppose that $\hess(f)$ and $\hess(g)$ are proportional. Then
$(c_0,\ldots c_{r+1})$ is proportional to $(b_0,\ldots b_{r+1})$.
\end{proposition}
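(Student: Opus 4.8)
The plan is to read $\hess(f)$ and $\hess(g)$ off Proposition \ref{prop:expr} and reduce the statement to a linear–independence fact. Set $m_i:=\prod_{j\neq i}l_j^{\,d-2}$ for $i=0,\dots,r+1$, a form of degree $(r+1)(d-2)$. By Proposition \ref{prop:expr}, up to nonzero scalars,
\[
\hess(f)=\sum_{i=0}^{r+1}\Big(\prod_{j\neq i}c_j\Big)m_i,\qquad \hess(g)=\sum_{i=0}^{r+1}\Big(\prod_{j\neq i}b_j\Big)m_i .
\]
So everything comes down to proving that $m_0,\dots,m_{r+1}$ are linearly independent.

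To establish this independence I would argue hyperplane by hyperplane. Fix $i$ and restrict a hypothetical relation $\sum_j a_jm_j=0$ to the hyperplane $\{l_i=0\}\subset\PP^r$. For each $j\neq i$ the form $m_j$ has $l_i^{\,d-2}$ as a factor (this is where $d\geq 3$ is used), hence $m_j|_{\{l_i=0\}}=0$; on the other hand $m_i|_{\{l_i=0\}}=\prod_{j\neq i}\big(l_j|_{\{l_i=0\}}\big)^{d-2}$ is a product of nonzero linear forms — nonzero because any two of $l_0,\dots,l_{r+1}$ are linearly independent, so no $l_j$ is a multiple of $l_i$. Since the polynomial ring on $\{l_i=0\}$ is a domain, $m_i|_{\{l_i=0\}}\neq 0$, whence $a_i=0$. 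Letting $i$ vary gives the claim. (Only pairwise independence of the $l_i$ is needed for this step.)

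With the independence in hand the proof finishes quickly. Because $\prod_j c_j\neq 0$ and $\prod_j b_j\neq 0$, every coefficient $\prod_{j\neq i}c_j$ and $\prod_{j\neq i}b_j$ is nonzero, so $\hess(f)$ and $\hess(g)$ are both nonzero; proportionality therefore means $\hess(f)=\lambda\,\hess(g)$ for some $\lambda\in\CC^{*}$. Comparing coefficients of the (now known to be independent) forms $m_i$ yields $\prod_{j\neq i}c_j=\lambda\prod_{j\neq i}b_j$ for all $i$. Writing $C=\prod_j c_j$ and $B=\prod_j b_j$ and using $c_i,b_i\neq 0$, the $i$-th equation reads $C/c_i=\lambda B/b_i$, i.e. $c_i/b_i=C/(\lambda B)$, a ratio independent of $i$. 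Hence $(c_0,\dots,c_{r+1})$ is proportional to $(b_0,\dots,b_{r+1})$, as claimed.

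I do not anticipate a real obstacle: the computational content is supplied entirely by Proposition \ref{prop:expr}, and the only non-formal ingredient is the linear independence of the products $m_i$, which the restriction-to-$\{l_i=0\}$ trick dispatches in one line. The hypothesis on the $l_i$ enters precisely (and only) there.
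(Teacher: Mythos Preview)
Your proof is correct and neater than the paper's. Both arguments start from Proposition~\ref{prop:expr}, writing $\hess(f)$ and $\hess(g)$ as linear combinations of the forms $m_i=\prod_{j\neq i}l_j^{d-2}$ with coefficients $\prod_{j\neq i}c_j$ and $\prod_{j\neq i}b_j$; the endgame, extracting proportionality of the $c_i$ and $b_i$ from proportionality of these coefficients, is also the same. The difference lies in how one shows that the coefficients can be compared. The paper fixes coordinates $l_i=x_i$, $l_{r+1}=\sum x_i$, forms the combination $G_0(b)\hess(f)-G_0(c)\hess(g)$, observes it is divisible by $x_0^{d-2}$ while $\hess(f)$ is not, and then chases specific monomials (the $x_1^{2(d-2)}$--term, etc.) to kill the remaining coefficients one by one. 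You instead isolate the single structural fact that makes all of this work, namely the linear independence of the $m_i$, and prove it uniformly by restricting to each hyperplane $\{l_i=0\}$. Note that the paper's ``not divisible by $x_0$'' step is exactly your restriction argument at $i=0$; you just recognise that the same trick applied at every $i$ finishes the job immediately, without any coordinate choice or monomial bookkeeping. Your observation that only pairwise independence of the $l_i$ is needed for this step is also correct; the full hypothesis that any $r+1$ are independent is what guarantees that the formula of Proposition~\ref{prop:expr} holds with a nonzero scalar, so it is still used, just upstream.
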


\begin{proof} We may assume $l_i=x_i$, for $i=0,\ldots, r$, and $l_{r+1}=x_0+\ldots +x_r$.

The Hessian of $f$, up to a factor, is 
\begin{equation}\label{eq:eq1}
\sum_{i=0}^r G_i(c)\frac{\prod_{j=0}^r x_j^{d-2}}{x_i^{d-2}}(x_0+\ldots +x_r)^{d-2}+G_{r+1}(c)(\prod_{j=0}^r x_j^{d-2})
\end{equation}
where $G_i(c)=\frac{1}{c_i}\prod_{j=0}^{r+1} c_j$ for $i=0,\ldots ,r+1$. Similarly, the Hessian of $g$ is
 \begin{equation}\label{eq:eq2}
 \sum_{i=0}^r G_i(b)\frac{\prod_{j=0}^r x_j^{d-2}}{x_i^{d-2}}(x_0+\ldots +x_r)^{d-2}+G_{r+1}(b)(\prod_{j=0}^r x_j^{d-2}).
 \end{equation}
Multiplying (\ref{eq:eq1}) by $G_0(b)$,  (\ref{eq:eq2}) by $G_0(c)$ and subtracting we get
$$x_0^{d-2}\left(\sum_{i=1}^r A_i(b,c)\frac{\prod_{j=0}^r x_j^{d-2}}{x_i^{d-2}}(x_0+\ldots +x_r)^{d-2}+A_{r+1}(b,c)(\prod_{j=1}^r x_j^{d-2})\right)$$
where $A_i(b,c)=G_0(b)G_i(c)-G_0(c)G_i(b)$ for $i=0,\ldots, r+1$.
Since we assumed $\hess(f)$ and $\hess(g)$ proportional, the last equation either equals again $\hess(f)$ (or $\hess(g)$) up to a constant, or it is identically zero. Since the Hessian is not divisible by $x_0$ we get that
the  polynomial 
\begin{equation}\label{eq:eq3}
\sum_{i=1}^r A_i(b,c)\frac{\prod_{j=0}^r x_j^{d-2}}{x_i^{d-2}}(x_0+\ldots +x_r)^{d-2}+A_{r+1}(b,c)(\prod_{j=1}^r x_j^{d-2})
\end{equation} 
vanishes identically. There is no term in (\ref{eq:eq3}) in which $x_1^h$ appears with $h>2(d-2)$. The term in which $x_1^{2(d-2)}$ appears has coefficient
$\sum_{i=2}^r A_i(b,c)\frac{\prod_{j=0}^r x_j^{d-2}}{(x_1x_i)^{d-2}}$  and this yields $A_i(b,c)=0$ for $i=2,\ldots, r$.
In the same way $A_1(b,c)=0$, which in turn implies $A_{r+1}(b,c)=0$. We get
$$\textrm{rk}\begin{pmatrix}G_0(b)&G_1(b)&\ldots&G_{r+1}(b)\\
G_0(c)&G_1(c)&\ldots&G_{r+1}(c)\end{pmatrix}=1,$$ which is easily seen to imply
$$\textrm{rk}\begin{pmatrix}b_0&b_1&\ldots&b_{r+1}\\
c_0&c_1&\ldots&c_{r+1}\end{pmatrix}=1.$$
\end{proof}

\begin{remark} The Hessian of $\sum_{i=0}^{r}c_i x_i^d$ (where $c_{r+1}=0$) is $\prod _{i=0}^{r}x_i^{d-2}$
and does not depend on $c_i$. This shows that the assumption in Proposition \ref{prop:ciror}  that $\prod_{i=0}^{r+1}c_i$ is nonzero cannot be removed.
\end{remark}

If $f=\sum_{i=0}^{r+1}c_i l_i^d$ is, as above, a polynomial of rank $r+2$, and $F$ is the hypersurface $f=0$, then the combinatorics of $\mathrm{Sing}(\Hess(F))$ is interesting and rich, as shown by the following:

\begin{proposition}\label{prop:singhess} Let $f=\sum_{i=0}^{r+1} l_i^d$ be general a polynomial of rank $r+2$, and $F$ the hypersurface $f=0$. Then $\Hess(F)$ has:\\
\begin{inparaenum}
\item [$\bullet$] multiplicity $d-2$ at the general point of each of  the ${{r+2}\choose 2}$ codimension two linear subspaces $L_{ij}$ with equations $l_i=l_j=0$, with $0\leq i<j\leq r+1$;\\
\item  [$\bullet$] multiplicity $2(d-2)$ at the general point of each of the ${{r+2}\choose 3}$ codimension three linear subspaces $L_{ijk}$ with equations $l_i=l_j=l_k=0$, with $0\leq i<j<k\leq r+1$.
\end{inparaenum} 

Moreover, if $d\geq 4$ these are the only singularities of $\Hess(F)$ in codimension 1 and if $d=3$ these are the only singularities in codimension $c\leq 2$.

In particular, from the configuration of the singularities of $\Hess(F)$ one can uniquely recover the linear forms $l_0,\ldots, l_{r+1}$ up to a factor. 
\end{proposition}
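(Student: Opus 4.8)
The plan is to start from the closed formula for the Hessian in Proposition~\ref{prop:expr}: normalising the coefficients to $1$, $h:=\hess(f)=\sum_{i=0}^{r+1}\prod_{j\neq i}l_j^{d-2}$, where $l_0,\dots,l_{r+1}$ are general, so that any $r+1$ of them are a basis. I write $H_i=\{l_i=0\}$, and for $S\subseteq\{0,\dots,r+1\}$ I put $L_S=\bigcap_{i\in S}H_i$, so that $L_S$ is nonempty in $\PP^r$ precisely when $|S|\le r$, with $\dim L_S=r-|S|$, and the subspaces $L_{ij}$, $L_{ijk}$ of the statement are the $L_S$ with $|S|=2$, $3$. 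The whole argument is a stratum-by-stratum analysis of $h$ along the arrangement $\bigcup_iH_i$.

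For the multiplicity assertions I would fix $S$ with $|S|\in\{2,3\}$, take a general point $p\in L_S$ (so $l_m(p)\neq0$ for every $m\notin S$, by general position), and complete $(l_s)_{s\in S}$ to a local coordinate system at $p$. Among the summands of $h$, the one with index $i$ is divisible by $\prod_{s\in S\setminus\{i\}}l_s^{d-2}$, hence vanishes at $p$ to order $\ge|S|(d-2)$ if $i\notin S$ and to order only $(|S|-1)(d-2)$ if $i\in S$. So for $S=\{a,b\}$ the minimal order $d-2$ is realised exactly by $i\in\{a,b\}$, and the leading form is $u_al_a^{d-2}+u_bl_b^{d-2}$ with $u_a,u_b$ nonzero constants; for $S=\{a,b,c\}$ the minimal order $2(d-2)$ is realised exactly by $i\in\{a,b,c\}$, with leading form a nonzero combination of the three distinct monomials $l_a^{d-2}l_b^{d-2}$, $l_a^{d-2}l_c^{d-2}$, $l_b^{d-2}l_c^{d-2}$. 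In each case the leading form is nonzero, giving $\mult_p\Hess(F)=d-2$, resp.\ $2(d-2)$. (One may double-check this against the explicit expression obtained by setting $l_i=x_i$ for $i\le r$ and $l_{r+1}=x_0+\dots+x_r$.)

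For the statement that these are the only singularities in the stated codimension, the three local facts I would establish are: (i) restricting $h$ to $H_a$ annihilates every summand with $i\neq a$, so $h|_{H_a}=(\prod_{j\neq a}l_j^{d-2})|_{H_a}$ is a nonzero polynomial on $H_a$ vanishing exactly on $\bigcup_{j\neq a}L_{aj}$; thus $\Hess(F)\cap H_a=\bigcup_{j\neq a}L_{aj}$ and $\Hess(F)$ is disjoint from the codimension-one stratum $U_{\{a\}}:=H_a\setminus\bigcup_{c\neq a}H_c$. (ii) On the torus $U_\emptyset:=\PP^r\setminus\bigcup_iH_i$, $\prod_il_i^{d-2}$ is invertible, so $\Hess(F)\cap U_\emptyset=\{g=0\}$ with $g:=\sum_il_i^{-(d-2)}$; a singular point of it needs $\nabla g=0$, and since the unique linear relation among the $l_i$ is $\sum_i\mu_il_i\equiv0$ with all $\mu_i\neq0$, this forces $l_i^{-(d-1)}=\lambda\mu_i$ for all $i$ and a common $\lambda$ — on such points $g=\sum_il_il_i^{-(d-1)}=\lambda\sum_i\mu_il_i=0$ automatically, and they form a finite set which for general $l_i$ is empty, the constraint attached to the last index being a nontrivial genericity condition; hence $\Sing\Hess(F)\cap U_\emptyset=\emptyset$ for general $f$. (iii) When $d=3$ one computes $\partial_{l_a}h|_{L_{ab}}=\prod_{j\neq a,b}l_j$, which is invertible on $U_{\{a,b\}}:=L_{ab}\setminus\bigcup_{c\neq a,b}H_c$, so $\Hess(F)$ is smooth at every point of $U_{\{a,b\}}$ (consistent with multiplicity $d-2=1$ there). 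Now I stratify $\PP^r=\bigsqcup_{|S|\le r}U_S$, $U_S:=L_S\setminus\bigcup_{c\notin S}H_c$, $\dim U_S=r-|S|$: if $Z$ is an irreducible component of $\Sing\Hess(F)$ and $S$ is minimal with $Z\subseteq L_S$, then $Z\cap U_S$ is dense in $Z$ and $\dim Z\le r-|S|$; by (i)--(ii) one has $|S|\ge2$, and by (iii) even $|S|\ge3$ when $d=3$. Hence for $d\ge4$ the codimension-one components of $\Sing\Hess(F)$ are exactly the $L_{ab}$ (which do lie in $\Sing\Hess(F)$, having multiplicity $d-2\ge2$), and for $d=3$ the components of codimension $\le2$ in $\Hess(F)$ are exactly the $L_{abc}$, the $L_{ab}$ being smooth points.

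To recover the forms, I would finally observe that $H_i$ is the linear span of $\{L_{ij}:j\neq i\}$, resp.\ of $\{L_{ijk}:j,k\neq i\}$ when $d=3$, and that which of these subspaces share a common index can be read off from the dimensions of their pairwise spans and intersections; so the configuration of singularities of $\Hess(F)$ determines the arrangement $\{H_i\}$, hence $l_0,\dots,l_{r+1}$ up to scalars. The step I expect to be the main obstacle is (ii): showing that the a priori finite singular locus of $\Hess(F)$ on the torus is in fact empty for general $f$, i.e.\ that the system $l_i^{-(d-1)}=\lambda\mu_i$ is generically inconsistent — this is where the genericity of the $l_i$ is genuinely used. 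The remaining stratified bookkeeping (including the low-dimensional cases $r=2,3$ and the $d=3$ versus $d\ge4$ split) is routine.
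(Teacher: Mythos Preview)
Your proof is correct, and your identification of step (ii) as the crux is accurate: once you normalise the hyperplane arrangement, the emptiness of the torus singular locus becomes a nontrivial closed condition on the scaling factors (equivalently, on the coefficients $c_i$ in $f=\sum c_i m_i^d$), and for a general choice it is not satisfied. In fact for the statement as phrased you do not strictly need emptiness in all cases: the system $l_i^{-(d-1)}=\lambda\mu_i$ cuts out a finite set, so as soon as $r$ is large enough it lies below the codimension threshold; only for small $r$ do you need the genericity.

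Your approach differs from the paper's in the ``only singularities'' part. For $d\ge 4$ the paper argues by restricting to the hyperplanes $H_i$, observing that $\Hess(F)\cap H_i=\sum_{j\neq i}(d-2)L_{ij}$ scheme-theoretically, and that an extra codimension-one singular component would force these restrictions to have worse multiplicities somewhere --- a terse global argument where your stratification is more explicit. For $d=3$ the methods diverge substantially: the paper invokes Segre's classical result that the singular locus of a generic symmetric determinantal hypersurface is pure of codimension~$2$ and degree $\binom{r+2}{3}$, and then excludes a hypothetical codimension-one singular component $\Sigma$ by a monodromy-plus-degree count (the $\mathfrak S_{r+2}$-symmetry forces $\deg\Sigma=\binom{r+1}{2}$, which would make a general plane section of $\Hess(F)$ split into lines). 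Your route --- computing $\partial_{l_a}h$ directly on $U_{\{a,b\}}$ --- is more elementary and self-contained, avoiding both the determinantal input and the monodromy; the paper's argument, on the other hand, yields the exact degree of the singular locus for free and sidesteps your torus computation entirely.
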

\begin{proof}  We recall Proposition \ref {prop:expr}, which shows that $\Hess(F)$ has equation 
\begin{equation}\label{eq:port}
\sum_{i=0}^{r+1}\prod_{j\neq i}l_j^{d-2}=0. 
\end{equation}
Each summand in \eqref {eq:port} has multiplicity at least $d-2$ along the subspaces $L_{ij}$, with $0\leq i<j\leq r+1$, and it has multiplicity at least $2(d-2)$ along the subspaces  $L_{ijk}$, with $0\leq i<j<k\leq r+1$. To see that these are the exact multiplicities, intersect $\Hess(F)$ with a hyperplane $l_i=0$, with $i=0,\ldots, r+1$. The intersection $L_i$ has equations
\[
l_i=0, \quad \prod_{j\neq i}l_j^{d-2}=0
\]
and this shows that the multiplicities are as in the statement. 

Next, let us prove the assertion about the dimension of the singular locus. Assume first $d\geq 4$ and  suppose there are other singularities in codimension 1. This would force the intersections $L_i$ to have multiplicity worse than $d-2$ at some point  of a space $L_{ij}$, with $j\neq i$, off the spaces $L_{ijk}$, with $i\neq k\neq j$, or to have  multiplicity worse than $2(d-2)$ along some of the spaces $L_{ijk}$, with $i,j,k$ distinct. Since this is not the case, the assertion is proved. 

Finally, consider the case $d=3$. Suppose first that there are no singularities in codimension 1. Since the hypersurface in $\PP^r$ defined by the vanishing  of a general symmetric determinant of order $r+1$ of linear forms has a singular locus of multiplicity 2, pure codimension 2 and degree ${{r+2}\choose 3}$ (see \cite [\S 4]{Segre}), this implies that there are no other singularities besides the ${{r+2}\choose 3}$ subspaces $L_{ijk}$, with $0\leq i<j<k\leq r+1$. 

In conclusion we have to prove that there are no singularities in codimension 1. Suppose by contradiction this is not the case and let $\Sigma$ be the codimension one singular locus of $\Hess(F)$. By looking at the equation of the sections $L_i$ we considered above, we see that the only possibility is that $\Sigma$ intersects $L_i$ along some of the spaces $L_{ijk}$, with $i,j,k$ distinct. Since we are dealing with a general polynomial $f$ of rank $r+2$, we may assume that there is a monodromy action on the polynomials $l_i$, which acts as the full symmetric group $\mathfrak S_{r+2}$. This implies that $\Sigma$ should equally cut $L_i$ along all the spaces $L_{ijk}$, with $i,j,k$ distinct, which are in number of ${{r+1}\choose 2}$. Hence $\deg (\Sigma)= {{r+1}\choose 2}$. On the other hand $\Hess(F)$ has degree $r+1$, hence its general plane curve section would be a curve of degree $r+1$ with ${{r+1}\choose 2}$ singularities, hence it would be the union of $r+1$ lines. This would imply that $\Hess(F)$ splits as the sum of $r+1$ hyperplanes which contradicts (\ref{eq:port}).
\end{proof}

As an immediate consequence we have:

\begin{thm}\label{prop:r+2} Let $r\geq 2$ and $d\geq 3$.  If $f$ and $g$ are general polynomials of degree $d$ and rank $r+2$ in $\PP^r$ such that $\hess(f)$ and $\hess(g)$ are proportional, then $f$ and $g$ also differ by a scalar multiple. 

In other words, the restriction of the hessian map $h_{d,r}$ to the locus of hypersurfaces of rank $r+2$ in $\Sigma(d,r)$ is birational onto its image. 
\end{thm}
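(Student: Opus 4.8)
The statement is essentially a corollary of Proposition \ref{prop:singhess} together with Proposition \ref{prop:ciror}, so the plan is to combine them. First I would fix a general polynomial $f$ of degree $d$ and rank $r+2$ in $\PP^r$, writing $f = \sum_{i=0}^{r+1} c_i l_i^d$ with all $c_i \neq 0$ and any $r+1$ of the $l_i$ linearly independent (both conditions hold for a general such $f$, since otherwise $f$ would have rank $\leq r+1$, or would lie in a proper subvariety of the rank-$(r+2)$ locus). By Proposition \ref{prop:expr} the Hessian $\Hess(F)$ has equation $\sum_{i=0}^{r+1}\prod_{j\neq i} l_j^{d-2} = 0$.

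\textbf{Key steps.} The crucial point is that the linear forms $l_0,\dots,l_{r+1}$ (up to scalars and reordering) are intrinsically determined by $\hess(f)$: this is exactly the last assertion of Proposition \ref{prop:singhess}, which recovers the $l_i$ from the configuration of the high-multiplicity loci $L_{ij}$ (or $L_{ijk}$) of $\Sing(\Hess(F))$. So suppose $g$ is another general polynomial of degree $d$ and rank $r+2$ with $\hess(g)$ proportional to $\hess(f)$. Then $\Hess(G)$ and $\Hess(F)$ are the same hypersurface, hence have the same singular locus with the same multiplicities; applying Proposition \ref{prop:singhess} to both, the linear forms appearing in a rank-$(r+2)$ decomposition of $g$ must be, up to scalars and permutation, the same $l_0,\dots,l_{r+1}$. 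Absorbing the scalars, we may write $g = \sum_{i=0}^{r+1} b_i l_i^d$ with the same $l_i$ as for $f$; since $g$ is general of rank $r+2$ we also have all $b_i \neq 0$. Now Proposition \ref{prop:ciror} applies verbatim and gives that $(c_0,\dots,c_{r+1})$ is proportional to $(b_0,\dots,b_{r+1})$, whence $f$ and $g$ differ by a scalar. The final sentence of the theorem is then just a restatement: the map sending (the class of) a general rank-$(r+2)$ polynomial to (the class of) its Hessian is generically injective, hence birational onto its image.

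\textbf{Main obstacle.} The only real subtlety — and the place I would be most careful — is justifying that ``$\hess(f)$ and $\hess(g)$ proportional'' forces the \emph{unordered set} of linear forms of $g$ to coincide with that of $f$. This is where one genuinely uses Proposition \ref{prop:singhess}: one must know that the codimension-two (for $d\geq 4$) resp. codimension-three (for $d=3$) components of $\Sing(\Hess(F))$ carrying multiplicity $d-2$ resp. $2(d-2)$ are precisely the $L_{ij}$ resp. $L_{ijk}$, and that from these linear subspaces one reads off the $l_i$ uniquely. Thus the proof is genuinely a two-line deduction once Propositions \ref{prop:expr}, \ref{prop:singhess} and \ref{prop:ciror} are in hand; no new computation is needed, only the observation that genericity of $g$ guarantees the nonvanishing hypotheses ($\prod b_i \neq 0$, any $r+1$ of the $l_i$ independent) required to invoke Proposition \ref{prop:ciror}.
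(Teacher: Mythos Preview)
Your proposal is correct and follows essentially the same approach as the paper: use Proposition \ref{prop:singhess} to recover the linear forms $l_i$ from $\Sing(\Hess(F))$, then invoke Proposition \ref{prop:ciror} to conclude that the coefficient vectors are proportional. The paper's proof is even terser than yours, but the logical structure is identical.
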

\begin{proof} Let $f=\sum_{i=0}^{r+1}c_i l_i^d$ and let $F$ be the hypersurface $f=0$. By Proposition \ref {prop:singhess}, we have that from $\Sing(\Hess(F))$ we can uniquely recover the linear forms $l_i$, $i=0,\ldots, r+1$, up to a factor. Hence we have  $g=\sum_{i=0}^r b_il_i^d$. Then Proposition \ref{prop:ciror} shows that $f$ and $g$ differ by a scalar. \end{proof}

 As a further consequence we have the:

\begin{thm}\label{thm:sylv} The Hessian map
\[
h_{3,3}: \Sigma(3,3)\dasharrow \Sigma(4,3)
\]
is birational onto its image. Hence the general cubic surface $F$ is uniquely determined by its Hessian $\Hess(F)$.  
\end{thm}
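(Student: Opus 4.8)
The plan is to derive Theorem~\ref{thm:sylv} as an immediate corollary of Theorem~\ref{prop:r+2} together with the classical Sylvester Pentahedral Theorem. First I would recall that Sylvester's theorem asserts that a general cubic form $f$ in four variables can be written, in an essentially unique way, as a sum of five cubes of linear forms, $f=\sum_{i=0}^{4}c_il_i^3$; equivalently, the generic cubic surface $F\subset\PP^3$ has Waring rank $r+2=5$, and the five planes $l_i=0$ (the \emph{Sylvester pentahedron}) are uniquely determined by $F$. Thus the locus of cubic surfaces of rank $5$ is dense in $\Sigma(3,3)$.

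Once this is in place, the theorem follows directly: since the rank-$5$ locus is dense in $\Sigma(3,3)$, the general member $F$ of $\Sigma(3,3)$ is a general polynomial of degree $3$ and rank $r+2$ in $\PP^3$ (here $r=3\geq2$ and $d=3\geq3$, so the hypotheses of Theorem~\ref{prop:r+2} are met). Applying Theorem~\ref{prop:r+2}, if $G$ is another cubic surface with $\Hess(G)$ proportional to $\Hess(F)$, then $G$ must itself have rank $5$ (this needs a small remark, see below), and then $f$ and the corresponding $g$ differ by a scalar, i.e.\ $F=G$. Hence $h_{3,3}$ is generically injective, and since we are in characteristic zero, a dominant generically injective rational map onto its image is birational onto its image. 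This gives the first assertion; the second ("the general cubic surface is uniquely determined by its Hessian") is just a restatement.

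The one point that requires a little care — and is the main obstacle — is the following: Theorem~\ref{prop:r+2} compares two \emph{general} rank-$(r+2)$ polynomials, so to conclude I must know that any $G$ with $\Hess(G)\propto\Hess(F)$ also lies in the rank-$5$ locus, and in fact that it shares the same pentahedron as $F$. This is exactly what Proposition~\ref{prop:singhess} provides: the singular locus of $\Hess(F)$ in codimension $\leq 2$ reconstructs the planes $l_0,\dots,l_4$ up to scalars (for $d=3$ this is the part of Proposition~\ref{prop:singhess} about singularities in codimension $c\le 2$). So $\Hess(G)=\Hess(F)$ forces $G=\sum_{i=0}^{4}b_il_i^3$ with the \emph{same} $l_i$, and now Proposition~\ref{prop:ciror} applies verbatim to show $(b_0,\dots,b_4)\propto(c_0,\dots,c_4)$, hence $F=G$. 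Thus the proof reduces to invoking Sylvester's theorem to populate $\Sigma(3,3)$ with rank-$5$ surfaces and then quoting Theorem~\ref{prop:r+2}.

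In summary, the write-up I envisage is short: (1) cite the Sylvester Pentahedral Theorem to conclude that the generic cubic surface has Waring rank $5$ and hence lies in the locus to which Theorem~\ref{prop:r+2} applies; (2) invoke Theorem~\ref{prop:r+2} with $r=3$, $d=3$ to get generic injectivity of $h_{3,3}$; (3) conclude birationality onto the image, and restate it as the unique recoverability of the general cubic surface from its Hessian. No genuinely new computation is needed beyond what Propositions~\ref{prop:singhess}, \ref{prop:ciror}, \ref{prop:expr} and Theorem~\ref{prop:r+2} already supply.
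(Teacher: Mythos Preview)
Your approach is exactly the paper's: the proof there is a single sentence invoking Theorem~\ref{prop:r+2} together with the Sylvester Pentahedral Theorem (stated immediately afterwards as Theorem~\ref{thm:penta}), and your summary (1)--(3) reproduces this.

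One comment: the ``obstacle'' you raise is illusory, and your proposed resolution of it does not quite work. Since Sylvester provides a Zariski \emph{open} dense subset $U\subset\Sigma(3,3)$ of rank-$5$ forms, birationality of $h_{3,3}|_U$ onto its image (which is the content of Theorem~\ref{prop:r+2}) already gives birationality of $h_{3,3}$ onto its image: the complement $\Sigma(3,3)\setminus U$ has strictly smaller dimension, so its image misses the generic point of $H_{3,3}$ and cannot contribute to the generic fibre. There is no need to argue that every $G$ with $\Hess(G)=\Hess(F)$ has rank $5$. Moreover, your attempt to show this via Proposition~\ref{prop:singhess} is circular: that proposition lets you read the $l_i$ off $\mathrm{Sing}(\Hess(F))$, but to conclude $g=\sum b_i l_i^3$ you would already need to know that $G$ admits a pentahedral decomposition. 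Simply drop that paragraph and the argument is clean.
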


This follows right away from Theorem \ref {prop:r+2}  and from the famous:

\begin{thm}[Sylvester Pentahedral Theorem, see \cite {ShB}] \label{thm:penta} There is a dense Zariski subset $U$ of $\Sigma(3,3)$ such that every $[f]\in U$ can be written as
\begin{equation}\label{eq:sylv}
f=l_0^3+l_1^3+l_2^3+l_3^3+l_4^3
\end{equation}
with $l_0,\ldots, l_4$ linear forms, which are uniquely determined up to permutation and numerical factors which are cubic roots of the unity.

In other terms the general $[f]\in \Sigma(3,3)$ has rank 5. 
\end{thm}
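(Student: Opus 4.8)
Theorem \ref{thm:penta} is a classical theorem of Sylvester; I would prove it by separating existence from uniqueness, both handled via apolarity. For \emph{existence} one must show that the general $[f]\in\Sigma(3,3)\cong\PP^{19}$ has Waring rank $\leq 5$, equivalently that the closure of the rank-$\leq 5$ locus — the union of the $4$-planes spanned by $5$ points of the Veronese $3$-fold $V_{3,3}\subset\PP^{19}$ — fills $\PP^{19}$. Its expected dimension is $\min\{5\cdot 4-1,\,19\}=19$ (since $\dim V_{3,3}=3$), so it is enough to exclude defectivity, which by Terracini's Lemma amounts to checking that the span $\sum_{i=0}^{4}l_i^2\,V\subseteq\Sym^3 V$ of the five tangent spaces — a sum of five $4$-dimensional subspaces, $\dim V=4$ — is all of $\Sym^3 V$ for general $l_i$. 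I would do this by one explicit computation, say with $l_i=x_i$ for $i\leq 3$ and $l_4=x_0+x_1+x_2+x_3$, verifying that the associated $20\times 20$ matrix is invertible; alternatively one invokes the Alexander--Hirschowitz theorem, the triple $(d,n,s)=(3,3,5)$ not being among its exceptions. Since the closure of the rank-$\leq 4$ locus has dimension $\min\{4\cdot 4-1,\,19\}=15<19$, the generic rank is exactly $5$, so \eqref{eq:sylv} holds on a dense open set.

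For \emph{uniqueness}, by the Apolarity Lemma a presentation $f=\sum_{i=0}^{4}c_i l_i^3$ (with the $c_i$ nonzero and the $l_i$ pairwise non-proportional) corresponds to a reduced set $Z=\{[l_0],\dots,[l_4]\}$ of $5$ points of $\PP(V)=\PP^3$ whose homogeneous ideal satisfies $I_Z\subseteq f^\perp$, where the apolar ideal $f^\perp\subseteq\Sym^\bullet V^\vee$ is, by Macaulay duality, Artinian Gorenstein of socle degree $3$ with Hilbert function $(1,4,4,1)$; in particular $(f^\perp)_2=\ker(\Sym^2 V^\vee\to V)$ has dimension $10-4=6$ for general $f$. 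Uniqueness of \eqref{eq:sylv} up to the obvious ambiguity — relabelling the $l_i$ and multiplying each by a cube root of unity — is then equivalent to the assertion that $f^\perp$ contains the ideal of at most one set of $5$ points. The plan is to reconstruct $Z$ canonically from $f^\perp$, that is, to carry out Sylvester's classical construction of the pentahedron of planes $\{l_i=0\}$ from the catalecticant data of $f$; this is the heart of the theorem, and I expect it, rather than existence, to be the main obstacle, so I would invoke \cite{ShB} here rather than reprove it.

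Finally, Theorem \ref{thm:sylv} follows at once: by Theorem \ref{thm:penta} the locus of cubic surfaces of rank $r+2=5$ (with $r=d=3$) is dense in $\Sigma(3,3)$, and by Theorem \ref{prop:r+2} the restriction of $h_{3,3}$ to that locus is birational onto its image; hence $h_{3,3}$ itself is birational onto its image, so the general cubic surface $F$ is recovered from $\Hess(F)$.
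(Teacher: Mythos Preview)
The paper does not prove Theorem~\ref{thm:penta} at all: it is stated as a classical result with a reference to \cite{ShB}, and is used only as input to derive Theorem~\ref{thm:sylv}. So there is no proof in the paper to compare your proposal against.

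Your outline is a reasonable sketch of the standard argument and in fact goes beyond what the paper offers. The existence part via Terracini's Lemma (or Alexander--Hirschowitz) is correct and standard; your observation that the generic rank is exactly $5$ because the rank-$\leq 4$ locus has dimension $15<19$ is fine. For uniqueness you correctly identify the apolarity framework and the fact that $(f^\perp)_2$ has dimension $6$, and you are right that reconstructing the five points from $f^\perp$ is the substantive step; since you end up invoking \cite{ShB} for it anyway, your treatment of Theorem~\ref{thm:penta} is, in effect, the same as the paper's: cite it. Your final paragraph deriving Theorem~\ref{thm:sylv} from Theorem~\ref{thm:penta} and Theorem~\ref{prop:r+2} is exactly what the paper does.
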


\section{Generic finiteness of the Hessian map}\label{sec:finite}

In this section we study the infinitesimal behaviour of the Hessian map at a general rank  $r+2$ hypersurface. In doing this we will prove that $h_{d,r}$ is generically finite onto its image for $r\geq 2$ and $d\geq 3$. 
 
\begin{thm}\label{thm:der_hessian}
  Let $l_i=x_i$ for $i=0,\ldots, r$, $l_{r+1}=\sum_{i=0}^rx_i$. Let $f=\sum_{i=0}^{r+1}l_i^d$, which is a general polynomial of rank $r+2$. Let $d\geq 3$,
$r\geq 2$. The differential of the hessian map $h_{d,r}$ at $[f]$ is injective. 
\end{thm}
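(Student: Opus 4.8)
The plan is to compute the differential of $h_{d,r}$ at $[f]$ explicitly, using the tangent-vector description already employed in the proof of Proposition \ref{prop:smooth}: a tangent vector at $[f]$ is represented by a degree $d$ polynomial $g$ modulo $f$, and the differential sends $g$ to the \emph{simultaneous Hessian} $\hess(f,g)$, namely the polarization of $\hess$ in the direction $g$, taken modulo $\hess(f)$. Concretely, if $M(f)=(f_{ij})$ is the Hessian matrix, then $\hess(f,g)=\sum_{i,j}\frac{\partial \det M}{\partial m_{ij}}\,g_{ij}$, which equals the trace of $M(f)^{\mathrm{adj}}\cdot M(g)$, i.e.\ $\det M(f)\cdot\operatorname{tr}\big(M(f)^{-1}M(g)\big)$ on the open set where $M(f)$ is invertible. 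So the differential is injective if and only if the only degree $d$ polynomial $g$ with $\det M(f)\cdot\operatorname{tr}(M(f)^{-1}M(g))\in \CC\cdot\hess(f)$, equivalently $\operatorname{tr}(M(f)^{-1}M(g))$ constant, is $g\in \CC f$ (note $\operatorname{tr}(M(f)^{-1}M(f))=r+1$).

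The first concrete step is to write $M(f)^{-1}$ for $f=\sum_{i=0}^{r+1}l_i^d$. Since $f_{ij}=d(d-1)\sum_k (\partial_i l_k)(\partial_j l_k) l_k^{d-2}$, the Hessian matrix factors as $M(f)=d(d-1)\,L\,D(f)\,L^{T}$, where $L$ is the $(r+1)\times(r+2)$ matrix of coefficients of the linear forms $l_0,\dots,l_{r+1}$ and $D(f)=\mathrm{diag}(l_0^{d-2},\dots,l_{r+1}^{d-2})$. This is a rank-structured (Cauchy–Binet type) presentation: by the Sylvester/Cauchy–Binet expansion, $\det M(f)$ is (up to constants) $\sum_{i}\prod_{j\ne i} l_j^{d-2}$ — consistent with Proposition \ref{prop:expr} — and $M(f)^{-1}$ can be written in terms of the $(r+1)\times(r+1)$ minors of $L$ and the reciprocals $l_k^{-(d-2)}$. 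I would then feed in $g=\sum c_{ij}x_ix_j\cdot(\text{stuff})$… more efficiently, expand $\operatorname{tr}(M(f)^{-1}M(g))$ as a rational function whose denominator is $\hess(f)=\sum_i\prod_{j\ne i}l_j^{d-2}$ and whose numerator is linear in the coefficients of $g$; the condition ``$\operatorname{tr}$ is constant'' becomes: numerator $=$ (const)$\cdot\hess(f)$, a system of linear equations in the $N(d,r)$ coefficients of $g$. The task is to show this linear system has a $1$-dimensional solution space, spanned by $g=f$.

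To organize that linear-algebra computation I would exploit the $\mathfrak S_{r+2}$-symmetry of $f$ (monodromy permuting the $l_i$, as in the proof of Proposition \ref{prop:singhess}) to decompose the space of $g$'s into isotypic pieces and check the constancy condition on each, and I would localize along the strata $L_{ij}=\{l_i=l_j=0\}$: near the generic point of $L_{ij}$ only the two summands $\prod_{k\ne i}l_k^{d-2}$ and $\prod_{k\ne j}l_k^{d-2}$ of $\hess(f)$ survive to lowest order, so matching the pole structure of $\operatorname{tr}(M(f)^{-1}M(g))$ along each $L_{ij}$ forces strong constraints on $g$ (this is the step that should kill most coordinates of $g$, in the same spirit as the vanishing-coefficient argument in Proposition \ref{prop:ciror}). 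The main obstacle I anticipate is precisely this bookkeeping: controlling $M(f)^{-1}$ well enough to read off the numerator of $\operatorname{tr}(M(f)^{-1}M(g))$ and then verifying that the residual linear system (after imposing the $L_{ij}$ conditions, using the $d\ge 3$, $r\ge 2$ hypotheses to avoid degenerate low-dimensional coincidences, perhaps with a separate small-case check analogous to the $d=8$ anomaly in Proposition \ref{thm:fin}) has no solutions beyond $\CC f$. A cleaner alternative worth trying first is to restrict the simultaneous Hessian to the lines $l_i=0$: by the proof of Proposition \ref{prop:singhess}, $\hess(f)|_{l_i=0}=\prod_{j\ne i}l_j^{d-2}$, and differentiating this identity in the direction $g$ relates $\hess(f,g)|_{l_i=0}$ to $g$ restricted to $\{l_i=0\}$; running this over all $i=0,\dots,r+1$ should recover enough of $g$ to conclude, reducing the problem to the already-established injectivity statements rather than a brute-force rank computation.
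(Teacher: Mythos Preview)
Your setup is correct --- the differential is indeed $g\mapsto\hess(f,g)=\operatorname{tr}\big(M(f)^{\mathrm{adj}}M(g)\big)$, and the factorization $M(f)=d(d-1)\,L\,D(f)\,L^{T}$ with $L$ the $(r+1)\times(r+2)$ coefficient matrix is right --- but what you have is a strategy, not a proof, and the step you flag as ``bookkeeping'' is exactly where the argument lives. The paper avoids the pole--matching/localization route entirely by first writing down the explicit closed formula
\[
dH_f(g)=\sum_{i=0}^{r}\Big(\prod_{j\neq i,\,j\leq r}l_j^{d-2}\Big)\,\partial_i^2 g\;+\;\sum_{0\leq i<j\leq r}\Big(\prod_{k\neq i,j}l_k^{d-2}\Big)\,(\partial_i-\partial_j)^2 g,
\]
obtained by expanding the $(r+1)$ determinants in the polarization directly (your Sherman--Morrison on $M(f)=\mathrm{diag}(l_0^{d-2},\dots,l_r^{d-2})+l_{r+1}^{d-2}\mathbf 1\mathbf 1^{T}$ would give the same thing). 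This expresses $dH_f(g)$ as a linear combination of the $\binom{r+2}{2}$ monomials $\prod_{j\in I}l_j^{d-2}$ over $r$--subsets $I\subset\{0,\dots,r+1\}$, with coefficients that are second--order differential operators applied to $g$.

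The decisive step is then a \emph{syzygy} argument rather than a localization: one checks that every degree--$(d-2)$ syzygy among these $\binom{r+2}{2}$ monomials has, in the slot corresponding to the monomial missing $l_i^{d-2}$ and $l_j^{d-2}$, an entry depending only on the two variables $x_i,x_j$. Hence $dH_f(g)=0$ forces $(\partial_i-\partial_j)^2g$ to involve only $x_i,x_j$ for every pair $i<j$; for $r\geq 2$ and $d\geq 3$ this pins $g$ down to $\sum_{i=0}^{r}c_ix_i^d$, and substituting back gives the linear system $(J-I)(c_0,\dots,c_r)^{T}=0$ with $J$ the all--ones matrix, which is nonsingular, so $g=0$. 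No separate small--case checks are needed.

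Your localization along $L_{ij}$ and your restriction to $\{l_i=0\}$ are morally aiming at the same constraint, but they are blunter instruments: restricting $\hess(f,g)$ to $\{l_i=0\}$ kills precisely the summand not divisible by $l_i$, so you lose information rather than isolate it, and matching ``pole structure'' of $\operatorname{tr}(M(f)^{-1}M(g))$ along $L_{ij}$ amounts to redoing the syzygy computation in local coordinates. I would recommend pushing your $LDL^{T}$ factorization through to the explicit formula above and then arguing via syzygies; that is where the $d\geq 3$, $r\geq 2$ hypotheses enter cleanly.
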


Remember that $\PP^r=\PP(V^\vee)$, where we introduced { homogeneous} coordinates $[x_0,\ldots, x_r]$. We consider the Hessian map
\[
\hess_{d,r}: f\in \Sym^d(V)\to \hess(f)\in \Sym^{(r+1)(d-2)}(V)
\]
at the polynomial level, and set $H:=\hess_{d,r}$. Theorem \ref {thm:der_hessian} follows from the following Lemma.

\begin{lemma} With notation and assumptions as in Theorem \ref{thm:der_hessian}, we have:\\
\begin{inparaenum} 
\item [(i)]  the differential of $H$ at $f$ is
$$dH_f(g)=\sum_{i=0}^rl_0^{d-2}\ldots\widehat{l_i^{d-2}}\ldots l_r^{d-2}{\partial_i^2}{g}+\sum_{0\le i<j\le r}l_0^{d-2}\ldots \widehat{l_i^{d-2}}\ldots \widehat{l_j^{d-2}}\ldots l_{r+1}^{d-2}{\left(\partial_i-\partial_j\right)^2}{g}$$ 
for any $g\in  T_f({\Sym}^d(V))$, where $\partial_i$ stays for the derivative with respect to $x_i$ and $(\partial_i-\partial_j)^2$ is the symbolic power;\\
\item [(ii)]  $dH_f$ is injective.
  \end{inparaenum}
\end{lemma}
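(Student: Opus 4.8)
The plan is to prove (i) by a direct differentiation of the Hessian determinant at $f$, and then to deduce (ii) by a dimension count: show the only $g$ with $dH_f(g)=0$ is $g$ itself (up to the scaling that is already accounted for by homogeneity), so that the kernel is trivial. For part (i), I would write $\hess(f+\varepsilon g)$ with $\varepsilon^2=0$ and use multilinearity of the determinant in its rows: since the Hessian matrix of $f$ at the chosen $f=\sum_{i=0}^{r+1}l_i^d$ is the sum of rank-one symmetric matrices $d(d-1)\,l_i^{d-2}\,(\nabla l_i)(\nabla l_i)^{\top}$, the first-order term picks out, for each pair of summands replaced by the corresponding second-order derivative of $g$, a cofactor expansion. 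One checks that $\det$ of a sum of $r+1$ rank-one matrices together with one perturbation row collapses (by the Cauchy–Binet / matrix-determinant-lemma type identity) to exactly the stated sum: the coefficient $l_0^{d-2}\cdots\widehat{l_i^{d-2}}\cdots l_r^{d-2}$ multiplies $\partial_i^2 g$, and for the ``extra'' form $l_{r+1}=\sum_{i=0}^r x_i$, whose gradient is $(\partial_0-\partial_j)$ relative to the basis dual to $x_0,\ldots,\widehat{x_j},\ldots$, one gets the cross terms with $(\partial_i-\partial_j)^2 g$. This is essentially Proposition \ref{prop:expr} differentiated, done uniformly in the $r+2$ rank-one pieces; I would phrase it as: $dH_f(g)=\sum_{0\le i<j\le r+1}\big(\prod_{k\ne i,j} l_k^{d-2}\big)\,(\nabla l_i\wedge \nabla l_j\text{-contraction of the Hessian of }g)$, then specialize the two types of index pairs.

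For part (ii), suppose $dH_f(g)=0$. The strategy is to read off vanishing of enough Taylor coefficients of $g$ by restricting the identity to the coordinate subspaces $L_i=\{x_i=0\}$ and the deeper strata $L_{ij}$, exactly as in the proof of Proposition \ref{prop:singhess}. On $x_i=0$ the only surviving monomial terms in $dH_f(g)$ are those whose coefficient product does not contain $l_i^{d-2}$, i.e.\ only the single summand $\big(\prod_{k\ne i} l_k^{d-2}\big)\partial_i^2 g$ restricted to $x_i=0$ — but $\prod_{k\ne i}l_k^{d-2}\big|_{x_i=0}$ is a nonzero polynomial (here $0\le i\le r$; the case $i$ corresponding to $l_{r+1}$ is symmetric after a change of coordinates), so $\partial_i^2 g$ vanishes on $x_i=0$, hence $x_i\mid \partial_i^2 g$. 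Doing this for all $i=0,\ldots,r+1$ and combining (using that the $l_i$ are in general position, any $r+1$ independent) forces $g$ to be a combination of the $l_i^d$ plus correction terms; one then feeds this back into the full identity $dH_f(g)=0$ and, comparing leading monomials as in \eqref{eq:eq3}, concludes $g$ is a multiple of $f$. Since we are computing the differential of a map of projective spaces (or, at the affine level, the differential restricted to a complement of the Euler direction), the scalar multiples of $f$ are precisely the kernel we are allowed, so $dH_f$ is injective as a map on the relevant tangent space.

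The main obstacle I expect is part (i): getting the combinatorial bookkeeping of the cofactor expansion exactly right so that the two families of terms (the $\partial_i^2$ terms indexed by $i\le r$ and the $(\partial_i-\partial_j)^2$ terms indexed by $0\le i<j\le r$, which together are the $\binom{r+2}{2}$ pairs from the $r+2$ forms $l_0,\ldots,l_{r+1}$) appear with the correct coefficients and nothing else survives. The rank-one structure of the Hessian of $f$ is what makes this tractable: $\det\big(\sum_{i} v_i v_i^{\top} + (\text{one } \varepsilon\text{-row})\big)$ expands via the matrix determinant lemma, and since there are exactly $r+1$ rank-one pieces filling an $(r+1)\times(r+1)$ matrix, only the ``omit one pair, insert the $g$-Hessian contracted against the two omitted directions'' terms are nonzero. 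Once (i) is in hand, part (ii) is the same restriction-to-strata argument already used twice in the paper, so it should go through routinely, the only care being the monodromy/symmetry remark to handle the form $l_{r+1}$ on the same footing as $l_0,\ldots,l_r$.
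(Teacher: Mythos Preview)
Your outline for part (i) is reasonable and close in spirit to the paper's argument (the paper writes out the Hessian matrix of $f$ explicitly as the matrix with $l_i^{d-2}+l_{r+1}^{d-2}$ on the diagonal and $l_{r+1}^{d-2}$ off-diagonal, then replaces one row at a time by the corresponding row of the Hessian of $g$ and collects monomials in the $l_i^{d-2}$). Your rank-one/Cauchy--Binet packaging should give the same result once the bookkeeping is done.

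Part (ii), however, has a genuine gap. Your key step is: ``on $x_i=0$ the only surviving term in $dH_f(g)$ is $\big(\prod_{k\ne i}l_k^{d-2}\big)\partial_i^2 g$, hence $x_i\mid\partial_i^2 g$.'' That is not correct. Look at the formula in (i): when you set $l_i=x_i=0$, every summand in the \emph{second} sum for which $i$ is one of the two omitted indices also survives, because those coefficients do not contain $l_i^{d-2}$ either. So the restriction to $\{x_i=0\}$ gives
\[
0=\Big(\prod_{k\ne i}l_k^{d-2}\Big)\partial_i^2 g\Big|_{x_i=0}
+\sum_{j\ne i}\Big(\prod_{k\ne i,j}l_k^{d-2}\Big)l_{r+1}^{d-2}(\partial_i-\partial_j)^2 g\Big|_{x_i=0},
\]
which is a genuine relation among several second derivatives of $g$, not a single nonzero factor times $\partial_i^2 g$. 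You cannot read off $x_i\mid\partial_i^2 g$ from this, and the subsequent ``feed it back and compare leading monomials'' sketch does not recover from that.

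The paper handles (ii) by a different idea: it observes that $dH_f(g)=0$ is a syzygy of degree $d-2$ among the $\binom{r+2}{2}$ products $\prod_{j\in I}l_j^{d-2}$, and proves directly that any such syzygy has, in the slot corresponding to omitting $l_i$ and $l_j$, a polynomial depending only on $x_i,x_j$. Hence each $(\partial_i-\partial_j)^2 g$ (and each $\partial_i^2 g$) involves only the two relevant variables. A short combinatorial argument using $d\ge 3$ and the existence of a third variable ($r\ge 2$) then forces $g=\sum_{i=0}^r c_i x_i^d$, and plugging this back into $dH_f(g)=0$ yields a nonsingular linear system in the $c_i$, so $g=0$. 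Note also that the kernel really is $\{0\}$ at the polynomial level: $dH_f(f)=(r+1)\hess(f)\ne 0$, so scalar multiples of $f$ are \emph{not} in the kernel, contrary to what you wrote.
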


\begin{proof}
  The Hessian $H(f)$ is, up to a constant factor,  the determinant of the matrix
  $$\begin{pmatrix}l_0^{d-2}+l_{r+1}^{d-2}&l_{r+1}^{d-2}&\ldots&l_{r+1}^{d-2}\\
     l_{r+1}^{d-2}&l_1^{d-2}+l_{r+1}^{d-2}&\ldots&l_{r+1}^{d-2}\\
       \vdots&&\ddots&\vdots\\
    l_{r+1}^{d-2}&l_{r+1}^{d-2}&\ldots&l_r^{d-2}+l_{r+1}^{d-2}\end{pmatrix}.$$
  Then $dH_f(g)$ equals
  $$\begin{vmatrix}g_{00}&g_{01}&\ldots&g_{0r}\\
    l_{r+1}^{d-2}&l_1^{d-2}+l_{r+1}^{d-2}&\ldots&l_{r+1}^{d-2}\\
    \vdots&&\ddots&\vdots\\
    l_{r+1}^{d-2}&l_{r+1}^{d-2}&\ldots&l_r^{d-2}+l_{r+1}^{d-2}\end{vmatrix}+
\begin{vmatrix}l_0^{d-2}+l_{r+1}^{d-2}&l_{r+1}^{d-2}&\ldots&l_{r+1}^{d-2}\\
    g_{01}&g_{11}&\ldots&g_{1r}\\
    \vdots&&\ddots&\vdots\\
    l_{r+1}^{d-2}&l_{r+1}^{d-2}&\ldots&l_r^{d-2}+l_{r+1}^{d-2}\end{vmatrix}+\ldots$$
$$+
  \begin{vmatrix}l_0^{d-2}+l_{r+1}^{d-2}&l_{r+1}^{d-2}&\ldots&l_{r+1}^{d-2}\\
    l_{r+1}^{d-2}&l_1^{d-2}+l_{r+1}^{d-2}&\ldots&l_{r+1}^{d-2}\\
    \vdots&&\ddots&\vdots\\
    g_{0r}&g_{1r}&\ldots&g_{rr}\end{vmatrix}$$
  Expand this sum collecting the monomials in $l_i$. The monomial
  $l_0^{d-2}\ldots\widehat{l_i^{d-2}}\ldots l_r^{d-2}$ appears with coefficient $g_{ii}$: here only the diagonal term in the $i$-th summand is involved.
  The monomial $l_0^{d-2}\ldots \widehat{l_i^{d-2}}\ldots \widehat{l_j^{d-2}}\ldots l_{r+1}^{d-2}$
  appears with coefficient involving both the $i$-th and the $j$-th summand, where we have respectively the two minors
  \[\begin{pmatrix}g_{ii}&g_{ij}\\l_{r+1}^{d-2}&l_j^{d-2}+l_{r+1}^{d-2}\end{pmatrix}\quad \text{ and}\quad 
  \begin{pmatrix}l_i^{d-2}+l_{r+1}^{d-2}&l_{r+1}^{d-2}\\g_{ij}&g_{jj}\end{pmatrix}.\]
  The corresponding coefficients are respectively $l_{r+1}^{d-2}(g_{ii}-g_{ij})$ and $l_{r+1}^{d-2}(g_{jj}-g_{ij})$, the resulting sum is  $l_{r+1}^{d-2}\left(\partial_i-\partial_j\right)^2g$. This proves (i). 
  
In order to prove (ii),
let $\left(I_1,\ldots I_N\right)$, with $N={{r+2}\choose{r}}$, be the set of subsets of cardinality $r$ of  $\{0,\ldots, r+1\}$, lexicographically ordered. 
We consider the syzygies of degree $d-2$ of the vector of monomials
  $\left(\prod_{j\in I_1}l_j^{d-2},\ldots, \prod_{j\in I_N}l_j^{d-2}\right)$.
 We claim that any of these syzygies contains, at the entry corresponding to $ l_0^{d-2}\ldots \widehat{l_i^{d-2}}\ldots \widehat{l_j^{d-2}}\ldots l_{r+1}^{d-2} $, only the variables $x_i$ and $x_j$. In order to prove this, we set for simplicity $(i,j)=(0,1)$.
We have the identity \begin{equation}\label{eq:syzd-2}\sum_{p=1}^N s_p \prod_{j\in I_p}l_j^{d-2} = 0,\end{equation}
where $(s_p)_{p=1}^N$ is a syzygy. Note all the terms in the left hand side of \eqref {eq:syzd-2} have degree $(d-2)(r+1)$.
Consider the last summand $s_Nl_2^{d-2}\ldots l_{r+1}^{d-2}$. The only terms in the left hand side of \eqref{eq:syzd-2} containing $x_2,\ldots, x_r$ with total degree larger than $(d-2)r$
must appear in this summand, since  all the terms of the other summands contain either $x_0^{d-2} $ or $x_1^{d-2}$. 
It follows from  \eqref{eq:syzd-2}   that each term of this summand containing $x_2,\ldots, x_r$ with total degree larger than $(d-2)r$ must vanish. Hence $s_N$ is a homogeneous polynomial
of degree $d-2$ depending only on $x_0, x_1$, which proves our claim.

  Let now $g$ be such that $dH_f(g)=0$. Note that by (i) we get a syzygy of degree $d-2$ of $\left(\prod_{j\in I_1}l_j^{d-2},\ldots, \prod_{j\in I_N}l_j^{d-2}\right)$ as above. It follows that $\left(\partial_i-\partial_j\right)^2{g}$ depends only on $x_i$, $x_j$, hence all terms in $g$ containing one among $x_i^2, x_ix_j, x_j^2$ must contain only the variables $x_i$, $x_j$.
We claim that only the powers $x_i^d$ appear in $g$. In fact, assume a term $M$ with two different variables $x_i$ and $x_j$ appears in $g$. The two variables $x_i$ and $x_j$ cannot appear in $M$ both at degree one, since we have $d\geq 3$ and the remaining variables in $M$ can be only $x_i$ and $x_j$. 
 So we may suppose that $x_i^2x_j$ appears in $M$. Then take another variable $x_k$ (here we need $r\geq 2$).  Applying $\left(\partial_i-\partial_k\right)^2$ to $M$, we see that $x_j$ appears in the result, which is a contradiction because only $x_i$ and $x_k$ should appear there. 

So we get $g=\sum_{i=0}^rc_ix_i^d$ for certain scalars $c_i$, with $i=0,\ldots,r$. Then $$dH_f(g)=\sum_{i=0}^r (\sum_{j\neq i}c_j)l_0^{d-2}\ldots\widehat{l_i^{d-2}}\ldots l_r^{d-2} l_{r+1}^{d-2}.$$ When this expression vanishes it implies $\sum_{j\neq i}c_j=0$ for any $i=0,\ldots,r$, that is
\begin{equation}\label{eq:mat}\begin{pmatrix}0&1&\ldots&1\\
1&0&\ldots&1\\
\vdots&&\ddots&\vdots\\
1&1&\ldots&0\end{pmatrix}\cdot\begin{pmatrix}c_0\\c_1\\ \vdots\\c_r\end{pmatrix}=0.
\end{equation}
The matrix appearing in \eqref{eq:mat} has the eigenvalue $-1$ with multiplicity $r$. Since it is traceless the remaining eigenvalue is $r$ with multiplicity one, hence the matrix is non singular. It follows $c_i=0$ for all $i=0,\ldots,r$, hence $g=0$, so proving (ii).\end{proof}  

As an immediate consequence, we have:

\begin{corollary}\label{cor:fin}
The Hessian map $h_{d,r}$ is generically finite onto its image for $d\geq 3$, $r\geq 2$. In other words, the Hessian variety $H_{d,r}=\im(h_{d.r})$ has dimension $N(d,r)=\dim(\Sigma(d,r))$.
\end{corollary}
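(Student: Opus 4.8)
The plan is to deduce the corollary directly from Theorem \ref{thm:der_hessian} by a standard semicontinuity argument, the only point requiring any care being that the statement about the differential really applies to the rational map $h_{d,r}$ at a point in its domain of definition. First I would check that the point $[f]$ with $f=\sum_{i=0}^{r+1}l_i^d$ from Theorem \ref{thm:der_hessian} lies in the domain of $h_{d,r}$: by Proposition \ref{prop:expr} one has $\hess(f)=\sum_{i=0}^{r+1}\prod_{j\neq i}l_j^{d-2}$ up to a scalar, and this polynomial is not identically zero, so $[f]\notin \GN_{d,r}$. Hence $h_{d,r}$ restricts to a morphism on the dense open set $U:=\Sigma(d,r)\setminus \GN_{d,r}$, and its differential is defined at every point of $U$, in particular at $[f]$, where by Theorem \ref{thm:der_hessian} it is injective.

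Next I would invoke lower semicontinuity of the rank of the differential. Over $U$, the map $h_{d,r}$ is given by the forms of the linear system $\H_{d,r}$, so the rank of $d(h_{d,r})_{[g]}$ is the rank of a Jacobian matrix whose entries are polynomials in $[g]$; the function $[g]\mapsto \operatorname{rk} d(h_{d,r})_{[g]}$ is therefore lower semicontinuous on $U$. By Theorem \ref{thm:der_hessian} this rank attains at $[f]$ the maximal possible value $N(d,r)=\dim\Sigma(d,r)$. Consequently the locus where the rank is strictly smaller is a proper closed subset of $U$, and there is a dense open subset $U_0\subseteq U\subseteq\Sigma(d,r)$ on which $d(h_{d,r})$ is injective. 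Note that it is irrelevant here that the rank-$(r+2)$ locus need not be dense in $\Sigma(d,r)$: Theorem \ref{thm:der_hessian} furnishes a single point where the differential is injective, and semicontinuity propagates this to a dense open set.

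Finally I would conclude by the usual dimension count. For a general point $[g]\in U_0$ the scheme-theoretic fibre $h_{d,r}^{-1}(h_{d,r}([g]))$ has tangent space $\ker d(h_{d,r})_{[g]}=0$ at $[g]$, hence is zero-dimensional at $[g]$; by upper semicontinuity of the fibre dimension the general fibre of $h_{d,r}$ is finite, so $h_{d,r}$ is generically finite onto its image and $\dim H_{d,r}=\dim\Sigma(d,r)-0=N(d,r)$. I expect no genuine obstacle in this argument: all the real content sits in Theorem \ref{thm:der_hessian}, which we may assume; the only subtlety worth spelling out is the verification above that $[f]$ is a point where $h_{d,r}$ is actually defined, so that the differential in Theorem \ref{thm:der_hessian} is the differential of the rational map and not merely a formal expression.
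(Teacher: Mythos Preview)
Your proposal is correct and takes essentially the same approach as the paper: the authors state the corollary as an ``immediate consequence'' of Theorem \ref{thm:der_hessian} without further argument, and what you have written is precisely the standard semicontinuity argument that makes this immediacy explicit. The extra care you take in verifying that $[f]\notin\GN_{d,r}$ is a nice touch, though the paper implicitly takes this for granted.
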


{
\begin{remark}\label{rem:fin} We conjecture that the Hessian map $h_{d,r}$ is birational onto its image for $d\geq 3$, $r\geq 2$, except for $h_{3,2}$,  but so far we have not been able to prove it. 

For instance we focused on the case $d=3$, $r=4$, trying to prove birationality for it. We are able to exhibit a form $f$ of degree 3 in $x_0,\ldots,x_4$ such that:\\
\begin{inparaenum}
\item [(i)] the differential of  $\hess_{3,4}$ is injective at $f$;\\
\item [(ii)] if $g$ is any cubic form in $x_0,\ldots,x_4$ such that $\hess(g)$ is proportional to $\hess(f)$ then $g$ is proportional to $f$.
\end{inparaenum}

This strongly suggests that birationality may hold, but it is not sufficient to prove it. Indeed it could be the case that for forms $h$ close to $f$ the fibre of $\hess_{3,4}$ consists of more than one point, but when $h$ tends to $f$ all the elements in the fibre, but $f$, tend to points in the indeterminacy locus. \end{remark}}

\providecommand{\bysame}{\leavevmode\hbox to3em{\hrulefill}\thinspace}
\providecommand{\MR}{\relax\ifhmode\unskip\space\fi MR }
\providecommand{\MRhref}[2]{%
  \href{http://www.ams.org/mathscinet-getitem?mr=#1}{#2}
}
\providecommand{\href}[2]{#2}


\printindex

\end{document}